\definecolor{labelkey}{rgb}{0,0.08,0.45}
\definecolor{refkey}{rgb}{0,0.6,0.0}
\definecolor{Brown}{rgb}{0.45,0.0,0.05}
\definecolor{dgreen}{rgb}{0.00,0.49,0.00}
\definecolor{dblue}{rgb}{0,0.08,0.75}
\numberwithin{equation}{section}
\newtheorem{lemma}{Lemma}
\newtheorem{proposition}{Proposition}
\newtheorem{corollary}{Corollary}
\newtheorem{theorem}{Theorem}
\theoremstyle{definition}
\newtheorem{remark}{Remark}
\providecommand{\norm}[1]{\lVert#1\rVert}
\providecommand{\scalarp}[1]{\langle#1\rangle}
\providecommand{\abs}[1]{\lvert#1\rvert}
\DeclareMathOperator*{\argmin}{\text{\rm{argmin}}}
\newcommand{\minimize}[2]{\ensuremath{\underset{\substack{{#1}}}{\text{\textrm{minimize}}}\;\;#2 }}
\newcommand{\EE}{\ensuremath{\mathsf E}}
\newcommand{\R}{\ensuremath \mathbb{R}}
\newcommand{\HH}{\ensuremath \mathsf{H}}
\newcommand{\XX}{\ensuremath \mathsf{X}}
\newcommand{\xx}{\ensuremath \mathsf{x}}
\newcommand{\zz}{\ensuremath \mathsf{z}}
\newcommand{\uu}{\ensuremath \mathsf{u}}
\newcommand{\yy}{\ensuremath \mathsf{y}}
\newcommand{\N}{\ensuremath \mathbb{N}}
\newcommand{\Le}{L_\varepsilon}
\newcommand{\bbE}{\mathbb{E}}
\newcommand{\bbI}{\mathbb{I}}
\newcommand{\bbN}{\mathbb{N}}
\newcommand{\bbP}{\mathbb{P}}
\newcommand{\bbR}{\mathbb{R}}
\newcommand{\cC}{\mathcal{C}}
\newcommand{\cD}{\mathcal{D}}
\newcommand{\cE}{\mathcal{E}}
\newcommand{\cF}{\mathcal{F}}
\newcommand{\cL}{\mathcal{L}}
\newcommand{\cX}{\mathcal{X}}
\newcommand{\cY}{\mathcal{Y}}
\newcommand{\cZ}{\mathcal{Z}}
\newcommand{\ug}{u}
\title{ { \sffamily An Improved Analysis of the Clipped Stochastic subGradient Method under Heavy-Tailed Noise} } 
\author[1]{Daniela A. Parletta}
\author[2]{Andrea Paudice}
\author[4, 5]{Saverio Salzo}
\affil[1]{\footnotesize Department of Mathematics, University of Genoa, Via Dodecaneso 35, 16146 Genova, Italy}
\affil[2]{\footnotesize Department of Computer Science, Aarhus University, Åbogade 34, Aarhus, 8200,  Denmark}
\affil[4]{\footnotesize DIAG, Sapienza University of Rome, Via Ariosto, 25, 00185 Roma, Italy}
\affil[5]{\footnotesize CSML, Istituto Italiano di Tecnologia, Via Enrico Melen 83, 16152 Genova, Italy}
\date{}
\begin{document}
\maketitle

\begin{abstract}
In this paper, we provide novel optimal (or near optimal) convergence rates for a clipped version of the stochastic subgradient method. We consider nonsmooth convex problems over possibly unbounded domains, under heavy-tailed noise that possesses only the first $p$ moments for $p \in \left]1,2\right]$. For the last iterate, we establish convergence in expectation for the objective values with rates of order $(\log^{1/p} k)/k^{(p-1)/p}$ and $1/k^{(p-1)/p}$, for anytime and finite-horizon respectively. We also derive new convergence rates, in expectation and with high probability, for the objective values along the average iterates---improving existing results by a $\log^{(2p-1)/p} k$ factor. Those results are applied to the problem of supervised learning with kernels demonstrating the effectiveness of our theory. Finally, we give preliminary experiments.
\end{abstract}

\vspace{1ex}
\noindent
{\bf\small Keywords.} 
{\small 
Stochastic convex optimization, subgradient method, heavy tailed noise.}\\[1ex]

\allowdisplaybreaks

\section{Introduction}
The \emph{Stochastic subGradient Method} (SsGM) proposed for the first time in \cite{Ermoliev1969}, is arguably one of the most popular methods in modern machine learning due to its simplicity and effectiveness. Indeed, it is widely used in empirical (regularized) risk minimization on large-scale datasets \cite{Bottou2018}, and in supervised learning \cite{Shalev2009,Shalev2010,Shalev2011}. For nonsmooth Lipschitz convex problems, which are the focus of this paper, the convergence of SsGM (and that of its non-Euclidean generalization \emph{Stochastic Mirror Descent} (SMD)) has been extensively investigated in \cite{Nemirovski1983,Nemirovski2009,Shamir2013,Harvey2019a,Jain2021,Liu2023b,Bach2024,Eldowa2024}. These results concern either the average of the produced iterates or the last iterate, with the latter being the preferred choice by the practitioners due to its simplicity and often superior empirical performances. 

The standard analysis of the convergence in expectation assumes the noise to have uniformly bounded variance \cite{Nemirovski2009,Shamir2013,Jain2021,Bach2024} and shows rates of order  $1/\sqrt{k}$ (possibly up to log factors) after $k$ iterations, which is optimal \cite{Nemirovski1983}. However, recent works have questioned the validity of the uniformly bounded variance assumption in modern datasets \cite{Zhang2020,Zhou2020,Gurbuzbalaban2021a,Gurbuzbalaban2021b}, and proposed a more general model where only the first $p \in \left]1,2\right]$ moments are bounded (with the special case $p=2$ corresponding to the canonical bounded variance setting). In this regime, called \emph{heavy-tails}, the work \cite{Zhang2020} shows that the SsGM cannot converge on quadratic problems and similar results are conjectured to hold also for nonsmooth problems. \emph{Clipping} offers a workaround that preserves the simplicity and the efficiency of SsGM, while increasing its robustness \cite{Nazin2019,Zhang2020,Gorbunov2020,Gorbunov2021,Jakovetic2023,Liu2023b,Nguyen2023,Sadiev2023}. Indeed, \cite{Liu2023b} showed that, relying on a clipping, rates of order  $1/k^{(p-1)/p}$ (up to polylog factors) are possible. We notice that, while being nearly optimal \cite{Vural2022}, these results only hold for the average iterate.

\subsection{Main contribution and comparison with previous works}
Motivated by the above discussion, we provide the following contributions.
\begin{itemize}
\item We derive the first convergence rates in expectation  for the objective values at \emph{the last iterate} of clipped SsGM under a uniformly bounded $p$-th moment condition. In particular, we show that in the \emph{finite-horizon} setting---when the horizon $k$ is known and can be used to set the algorithm's parameters---the last iterate achieves an optimal convergence rate in expectation of order  $1/k^{(p-1)/p}$. See \Cref{thm:li_unbounded_fh}. On the other hand, in the \emph{anytime} setting (aka \emph{infinite-horizon}, i.e., when the horizon is not finite), we show that the corresponding rate is of order $(\log^{1/p} k)/k^{(p-1)/p}$ (ignoring polyloglog factors). See \Cref{thm:li_unbounded_at}.
Our analysis is based on a reduction to the error of the average iterate, which also shares the same convergence rate (\Cref{crl:avg_unbounded}), improving upon the state-of-the-art bound by shaving off a 
$\log^{(2p-1)/p)} k$ factor.

\item We prove rates of convergence in high probability for the objective values along the average iterates that
match the corresponding ones in expectation up to a $\log(1/\delta)$ factor, accounting for the confidence level. These rates improve the state of the art reducing the rate of convergence from a $(\log^2\! k)/k^{(p-1)/p}$ to $(\log^{1/p}\! k)/k^{(p-1)/p}$. See \Cref{thm:avg_unbounded_hp} and \Cref{crl:avg_unbounded_hp}.

\item From a technical perspective, the clipping opearation leads to a biased estimator of the subgradient which introduces the challenge of handling an additional term in the recursive inequality that governs the behavior of the iterates. This issue can be easily resolved only when the constraint set is bounded (see, for example, \cite{Parletta2024}). We address this problem in the general case by presenting an unrolling result for the associated recurrence (see \Cref{lm:recurrences}).

\item Our analysis accommodates a wider range of parameter settings compared to previous works, which proves to be more effective in practical applications.

\item Finally, we discuss the implications of our results for the case of supervised learning with kernels, showing that clipped SsGM can be easily \emph{kernelized} and achieves optimal performance under substantially more general assumptions on the data distribution than traditional methods.
\end{itemize}


\paragraph{\upshape Related Work} 
Under the uniformly bounded variance assumption, the analysis of the average iterate of SsGM is straightforward and yields rates of order  $1/\sqrt{k}$ and $(\log k)/\sqrt{k}$ in the finite horizon and anytime settings, respectively \cite{Nemirovski1983}. The last iterate matches this performance, but its analysis is more involved and depends on whether $k$ is known \cite{Jain2021} or not \cite{Shamir2013}. 
Notably, the latter works analyzes the last iterate behavior under the strong assumption that the domain is bounded.

Under the more general bounded $p$-th moment setting \cite{Zhang2020}, existing works consider a clipped version of SsGM \cite{Liu2023b,Nguyen2023,Sadiev2023}. The work \cite{Nguyen2023} improves upon \cite{Sadiev2023} and, for smooth convex problems, shows that the average iterate features high probability rates of order $1/k^{(p-1)/p}$ and $(\log^{2/p} k)/k^{(p-1)/p}$ in the finite horizon and anytime settings, respectively. The work most closely related to ours is \cite{Liu2023b} which considers nonsmooth convex problems, although it focuses solely on the average iterate. The authors show convergence in expectation with rates of order $1/k^{(p-1)/p}$ and $(\log^2 k)/k^{(p-1)/p}$ for known and unknown $k$, respectively. In the same work, they also established rates of convergence in high probability with a confidence overhead of order $\log(1/\delta)$. Note that $1/k^{(p-1)/p}$ is optimal in this regime \cite{Vural2022}. Our results on the average iterate improve upon the state of the art by reducing the numerator to $\log^{1/p} k \log^2\log k$ in the anytime setting. Moreover, unlike \cite{Liu2023b}, we allow for a parameter setting in which the initial clipping level is chosen as close as needed to the Lipschitz constant of the objective function---a feature that can be beneficial in practical applications. In addition, we extend the convergence rates in expectation to the last iterate. We summarize this discussion in Table \ref{tab:results}.

A related line of research \cite{Vural2022,Liu2024} considers SMD with \emph{uniformly convex regularizers}. These works show rates of the same order as ours, although the analysis of the average iterate is restricted to bounded domains and finite horizon \cite{Vural2022}. One shortcoming of this method is that it is unclear whether the updates can be concretely implemented, as the closed-form solution to the Bregman projection problem (induced by that specific regularizer) may be unavailable. Indeed, the authors leave open any discussion on the computational tractability of this method. 

\paragraph{\upshape Organization} 
The rest of the paper is organized as follows. In \Cref{sec:2}, we present the assumptions, the algorithm, and the different schedules for the related parameters. In \Cref{sec:AV}, we carry out the convergence analysis for the average iterates of C-SsGM, providing rates both in expectation and in high probability. In \Cref{sec:LI}, we study the convergence rates for the last iterate of C-SsGM in both finite horizon and anytime settings. \Cref{sec:SL} addresses the application of the proposed algorithm and results to the problem of supervised learning with kernels. Sections~\ref{sec:experiments} and \ref{sec:conclusion} contain preliminary numerical experiments and conclusions, respectively. Finally, Appendices~\ref{app:A}, \ref{app:B},
and \ref{sec:appC} include additional technical results required for the convergence analysis, as well as implementation details for the application to supervised learning problems.

\begin{table}[t]
\centering
\resizebox{0.90\textwidth}{!}{%
\begin{tabular}{lc|c||c|c}
\multirow{2}{*}{} &
\multicolumn{2}{c}{SOTA} &
\multicolumn{2}{c}{This Work} \\
\midrule
\hline
& AT & FH & AT & FH \\ 
\hline
 & & & & \\
\textsc{Last Iterate} & $\dfrac{\log k}{\sqrt{k}}$ (\text{only } $p=2$) & $\dfrac{1}{\sqrt{k}}$ (\text{only } $p=2$) & $\boxed{\dfrac{\log^{1/p} k}{k^{(p-1)/p}}}$ & $\boxed{\dfrac{1}{k^{(p-1)/p}}}$  \\[-1.5ex]
(in Expectation) & & & & \\[1.5ex]
\textsc{Average Iterate} & $\dfrac{\log^2\! k}{k^{(p-1)/p}}$ & $\dfrac{1}{k^{(p-1)/p}}$ & $\dfrac{{\boxed{\log^{1/p} k}}}{k^{(p-1)/p}}$ & $\dfrac{1}{k^{(p-1)/p}}$ \\[-1.1ex]
(in Expectation) & & & & \\[1.5ex]
\textsc{Average Iterate} & $\dfrac{\log^2\! k \log(1/\delta)}{k^{(p-1)/p}}$ & $\dfrac{\log(1/\delta)}{k^{(p-1)/p}}$ & $\dfrac{{\boxed{\log^{1/p} k} } \log(1/\delta)}{k^{(p-1)/p}}$ & $\dfrac{\log(1/\delta)}{k^{(p-1)/p}}$ \\[-1.1ex]
(in high probability) & & & & \\[1.5ex]
\hline
\phantom{\bigg\vert}
\textsc{Clipping range} & $\left[2 L,+\infty\right[$ & $\left[2 L,+\infty\right[$  & $\left]L,+\infty\right[$ & $\left]L,+\infty\right[$  \\
\hline
\end{tabular}}
\vspace{2ex}
\caption{Known rates of convergence in Expectation and high probability, reported up to polyloglog factors in $\log k$, for clipped SsGM on nonsmooth problems. SOTA refers to the State-Of-The-Art. FH stands for Finite Horizon, and denotes the case where $k$ is known \emph{apriori}, while AT stands for Any Time, and refers to the case where $k$ is unknown. The SOTA rates for $p=2$ refer to the unclipped version of SsGM. The advances of this work for the rates are highlighted with boxes.
}
\label{tab:results}
\end{table}
%
\section{Problem setting and algorithm}
\label{sec:2}
In this work, we address the following optimization problem
\begin{equation}
\label{eq:mainprob}
\minimize{\xx \in \XX}{f(\xx)},
\end{equation}
under the following standing hypothesis:
\begin{enumerate}[label=H\arabic*{\rm}]
\item\label{H1} $\HH$ is a real separable\footnote{Separability is required to avoid measurability issues with vector-valued random variables.} Hilbert space and $f\colon \HH \to \bbR$ is a convex and Lipschitz continuous function with Lipschitz constant $L>0$.
\item\label{H2} The constraint set $\XX\subset \HH$ is nonempty closed and convex and 
the orthogonal projection onto $\XX$, $P_{\XX}$, can be computed explicitly.
\item\label{H2b} $\argmin_{\xx \in \XX} f(\xx) \neq \varnothing$.
\item\label{H3} $\xi$ is a random variable and, for all $\xx \in \XX$, $\hat{\ug}(\xx,\xi) \in \HH$,
$\bbE[\norm{\hat{\ug}(\xx,\xi)}]<+\infty$  and   $\bbE[\hat{\ug}(\xx,\xi)] \in \partial f(\xx)$.
\item\label{H4} For every $\xx \in \XX$, $\bbE[\norm{\hat{\ug}(\xx,\xi) - \bbE[\hat{\ug}(\xx,\xi)]}^p] \leq \sigma^p$, for some $\sigma\geq0$ and $p \in \left]1,2\right]$.
\end{enumerate}
\vspace{1ex}

Hypotheses \ref{H3} ensures that an unbias stochastic subgradient of $f$ is available, while \ref{H4}
states that the $p$-th moments of the noise are uniformly bounded.
To handle such type of situation, we consider the \emph{clipped} version of SsGM in \Cref{algorithm:clippedSsGM}, where for every $\uu \in \HH$ and $\lambda > 0$ the clipped operator is defined as
\begin{equation*}
\textsc{CLIP}(\uu, \lambda) = 
\max\big\{\norm{\uu}/\lambda , 1\big\}^{-1}\hspace{-1ex} \cdot \uu.\\
\end{equation*}
We note that in Algorithm~\ref{algorithm:clippedSsGM} 
the stepsizes $\gamma_k$'s
and clipping levels $\lambda_k$'s are left unspecified.
In the convergence analysis we will consider several explicit 
 rules for determine such parameters, depending 
 on the analyzed time horizon (number of total iterations), which can be infinite or finite,
 and whether one is interested to the averaged iterate $\bar{x}_k$
 or to the last iterate  $x_k$. In particular the following rule will cover the anytime setting 
\vspace{1ex}
 \begin{enumerate}[leftmargin=7ex, label=AT\arabic*{\rm}]
\item\label{AT}  $\displaystyle\gamma_k = \frac{\gamma}{(k (1 + \log k))^{1/p}}$,
\ $\lambda_k = \max\{\Le, \lambda (k (1 + \log k))^{1/p}\},\quad \forall\ k \in \N$,
\end{enumerate}
\vspace{1ex}
where $\Le = (1+\varepsilon)L$, with $\varepsilon>0$ arbitrary.
As for the finite-time horizon, if $k \in \N$ is the fixed horizon, we will use two alternatives.
The first one assumes a constant value along the entire time window $\{1,\dots, k\}$.
\vspace{1ex}
\begin{enumerate}[leftmargin=7ex, label=FH\arabic*{\rm}]
\item\label{FH}  $\displaystyle(\gamma_i)_{1 \leq i \leq k} \equiv \frac{\gamma}{k^{1/p}}$,
\ $(\lambda_i)_{1 \leq i\leq k} \equiv \max\{\Le,\lambda k^{1/p}\}$.
\end{enumerate}
\vspace{1ex}
The second one, based on an idea from \cite{Jain2021}, relies on dividing the execution time into epochs. Specifically,
one set $n = \lceil \log_2 k \rceil$ and, for each $j = \{0,1,\dots,n\}$, define the \emph{epoch}  $E_j := \{k_j+1,\dots,k_{j+1}\}\subset \{1,\dots, k\}$,
where
\begin{itemize}
\item $k_{n+1}=k$
\item $k_{j} = k - \lceil k/2^j \rceil$ for $j=0,1,\dots,n$.
\end{itemize}
So, the time window $\{1,\dots k\}$ is partitioned into $n+1$ epochs.
Then  the parameter values are assumed to be constant within each epoch, that is,
for every $i \in \{0,1,\dots,n\}$,
\vspace{0.2ex}
\begin{enumerate}[leftmargin=7ex, label=FH\arabic*{\rm}]
\setcounter{enumi}{1}
\item\label{FH2}  $\displaystyle(\gamma_i)_{i \in E_j} \equiv \dfrac{\gamma}{2^j k^{1/p}}$,
\ $(\lambda_i)_{i \in E_j} \equiv 2^j \max\{\Le, \lambda k^{1/p}\}$.
\end{enumerate}
\vspace{1ex}
\begin{algorithm}[t]
\caption{Clipped Stochastic subGradient Method (C-SsGM)}
\label{algorithm:clippedSsGM}
Given the stepsizes $(\gamma_k)_{k \in \N} \in \R_{++}^\N$, the clipping levels $(\lambda_k)_{k \in \N} \in \R_{++}^\N$, the batch size $m \in \N$, $m\geq 1$, and an initial point $\xx_1 \in \XX$, then
$x_1\equiv \xx_1$ and
\begin{equation}
\label{eq:algo}
\begin{array}{l}
\text{for}\;k=1,\ldots\\[1ex]
\left\lfloor
\begin{array}{l}
\text{draw } \bm{\xi}^k = (\xi_j^k)_{1 \leq j \leq m}\ \ m \text{ independent copies of } \xi\\[1ex]
\bar{\ug}_k = \displaystyle\frac 1 m \sum_{j=1}^m \hat{\ug}(x_k, \xi^k_j),\\[3ex]
\tilde{\ug}_k = 
\textsc{CLIP}(\bar{\ug}_k, \lambda_k)\\[1ex]
x_{k+1} = P_{\XX}(x_k-\gamma_k \tilde{\ug}_k).
\end{array}
\right.
\end{array}
\end{equation}
From the sequence $(x_k)_{k \in \N}$ one defines also
\begin{equation}
(\forall\, k \in \N)\qquad\bar{x}_k = \frac{1}{k} \sum_{i=1}^k x_i,\quad \ug_k = \bbE[\hat{\ug}(x_k, \xi^k_1)\,\vert\, \bm{\xi}^1, \dots, \bm{\xi}^{k-1}].
\end{equation}
Note that $(\bm{\xi}^k)_{k \in \N}$ are assumed independent too.\\
\end{algorithm}
In all settings, $\varepsilon$ shall be thought as an arbitrary small constant chosen by the user, not as a free parameter to be tuned.
\paragraph{\upshape Notation} 
We set $\bbN = \{1, 2, \dots\}$, and for every $k \in \N$, $[k] = \{1,\dots k\}$. Moreover, $\bbR_+ = [0, \infty [$ and $\bbR_{++}=\left]0,+\infty\right[$. A sequence $(\alpha_k)_{k \in \N}$ in $\R$
is \emph{decreasing} (risp.~\emph{increasing}) if $\alpha_{k+1}\leq \alpha_k$ (risp.~$\alpha_{k+1}\geq \alpha_k$) for every $k \in \N$. Given two numerical sequences $(a_k)_{k\ \in \N}$
and $(b_k)_{k\ \in \N}$ we write $a_k \lesssim b_k$ to mean $a_k \leq c b_k$, for every $k \in \N$, for some unspecified constant $c \in \R_+$.

For a real Hilbert space $\HH$ we denote by $\norm{\cdot}$ and $\scalarp{\cdot, \cdot}$ its norm and scalar product, respectively. For a convex function $f\colon \HH \mapsto \bbR$ we denote by $\partial f(\xx)$ its \emph{subdifferential} of $f$ at $\xx \in \HH$, i.e. the set
\begin{equation*}
\partial f(\xx) = \{\uu \in \HH \,\vert\, \forall \yy \in \HH, f(\yy) \geq f(\xx) + \scalarp{\uu, \yy-\xx}\}.
\end{equation*}
An element of $\partial f(\xx)$ is called \emph{subgradient} of $f$ at $\xx$.
Notice that \ref{H2} implies that all subgradients have the norm bounded by $L$. 

We will consider random variables with underlying probability space $(\Omega, \mathfrak{A}, \bbP)$ taking values in $\HH$. We use the default font for random variables and sans serif font for their realizations. The expected value operator is denoted by $\bbE$. A copy of a random variable is random variable having the same distribution of the given one. 
Finally, for a fixed integer $m \geq 1$,
we define the following recurring constants
\begin{equation}
\label{eq:constants}
\begin{aligned}
&a_p = \bigg(1 + \frac{1}{\varepsilon}\bigg)^{p-1} \sigma_m^p, \quad b_p = 2^{p-1} ( \sigma_m^p + L^p), \quad \Le = (1+\varepsilon)L,
\end{aligned}    
\end{equation}
where, $\sigma_m\geq 0$ is such that
\begin{align*}
\forall\, \xx \in \HH\colon \bbE[\|\bar{\ug} - \bbE[\bar{\ug}]\|^p] \leq \sigma_m^p,\qquad\text{and}\qquad \bar{\ug} = \frac{1}{m} \sum_{i=1}^m \hat{\ug}(\xx,\xi_i).
\end{align*}
with $(\xi_i)_{i\leq i\leq m}$ i.i.d.~copies of $\xi$. Note that $\sigma^p_m$ can always be taken as
$\sigma^p_m = C \sigma^p/m^{p-1}$ with $C \in [1,2]$
(actually when $p=2$, $C = 1$) \cite{Pinelis2015,Cox1982}.
In the rest of the paper, for the sake of brevity, we set, for every $k \in \N$,
\begin{equation*}
\mathcal{F}_k = \sigma(\bm{\xi}^1,\dots, \bm{\xi}^{k-1})\quad\text{and}\quad\bbE_{k}[\,\cdot\,] = \bbE[\,\cdot\,\vert\,\mathcal{F}_{k}]
\end{equation*}
Note that, according to Algorithm~\ref{algorithm:clippedSsGM}, the random vectors $x_k$ and $\tilde{\ug}$ are depending on the $\bm{\xi}^k$'s, more specifically we have
\begin{equation*}
x_k = x_k (\bm{\xi}^1,\dots, \bm{\xi}^{k-1})
\quad\text{and}\quad \tilde{\ug}_k = \tilde{\ug}_k (\bm{\xi}^1,\dots, \bm{\xi}^{k}).
\end{equation*}
Thus, when $\mathcal{F}_{k}$ is given, the random vectors $x_1, \dots, x_k$ (up to the current iterate) are frozen and 
\begin{equation*}
\tilde{\ug}_k=\textsc{CLIP}\bigg(\frac 1 m \sum_{j=1}^m \hat{\ug}(x_k, \xi^k_j), \lambda_k\bigg)
\end{equation*}
depends only on $\bm{\xi}^k$ (which is independent on $x_1,\dots, x_k$). 

\section{Analysis of the average iterate}
\label{sec:AV}
The convergence analysis of the average iterates of \Cref{algorithm:clippedSsGM} relies on two aspects: a bound on certain statistics of the clipped subgradient estimator throughout the iterations, and a suitable control of the distance between the iterates and the optimum. The first aspect is addressed in Lemma~\ref{lm:useful_bounds}, while the second hinges on unrolling a recursive relation---a result that is given in Lemma~\ref{lm:recurrences}. The proofs are deferred to Appendix~\ref{app:A}.

%
\begin{restatable}{lemma}{statistical}
\label{lm:useful_bounds}
The iterates generated by \Cref{algorithm:clippedSsGM} satisfy the following conditions
\begin{enumerate}[label={\rm(\roman*)}]
\item\label{eq:scalarp} $\forall\, \xx \in {\XX}\colon \bbE \scalarp{\tilde{\ug}_k-\ug_k,\xx-x_k} \leq a_p \cdot \lambda_k^{1-p} \cdot \bbE \norm{x_k-\xx}$,
\item\label{eq:norm_squared} $\bbE \norm{\tilde{\ug}_k}^2 \leq b_p \cdot \lambda_k^{2-p}$,
\end{enumerate}
for every $k \in \N$.
\end{restatable}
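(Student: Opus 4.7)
The plan is to condition on $\mathcal F_k$, which freezes $x_k$, $\ug_k$ and $\lambda_k$, and reduce both claims to deterministic (given $\mathcal F_k$) estimates involving only $\bar{\ug}_k$. For \ref{eq:scalarp}, since $x_k$ and $\ug_k$ are $\mathcal F_k$-measurable, I would pull the inner product inside $\bbE_k$ and apply Cauchy--Schwarz, reducing the task to bounding the conditional bias $\|\bbE_k[\tilde{\ug}_k]-\ug_k\|$ by $a_p\lambda_k^{1-p}$ pointwise; the tower property then produces the factor $\bbE\|x_k-\xx\|$ on the right.

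To estimate this bias I would use the unbiasedness $\bbE_k[\bar{\ug}_k]=\ug_k$ to rewrite it as $\|\bbE_k[\tilde{\ug}_k-\bar{\ug}_k]\|$, and observe that by definition of the clipping operator $\|\bar{\ug}_k-\tilde{\ug}_k\|=(\|\bar{\ug}_k\|-\lambda_k)_+$. Jensen's inequality then reduces everything to controlling $\bbE_k[(\|\bar{\ug}_k\|-\lambda_k)_+]$. The key algebraic step I plan: on $\{\|\bar{\ug}_k\|>\lambda_k\}$, the triangle inequality combined with $\|\ug_k\|\leq L$ yields the pointwise bound
\[
(\|\bar{\ug}_k\|-\lambda_k)_+ \leq \big(\|\bar{\ug}_k-\ug_k\|-(\lambda_k-L)\big)_+.
\]
Then the elementary inequality $(Y-t)_+\leq Y^p/t^{p-1}$ (for $Y,t\geq 0$ and $p\geq 1$, since $(Y/t)^{p-1}\geq 1$ on $\{Y>t\}$), combined with \ref{H4}, gives a bound of the form $\sigma_m^p/(\lambda_k-L)^{p-1}$. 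Finally, $\lambda_k \geq \Le = (1+\varepsilon)L$ yields $\lambda_k-L\geq \varepsilon\lambda_k/(1+\varepsilon)$, which converts this into $(1+1/\varepsilon)^{p-1}\sigma_m^p\,\lambda_k^{1-p}=a_p\lambda_k^{1-p}$, as required.

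For \ref{eq:norm_squared}, I would proceed more directly: since $\|\tilde{\ug}_k\|\leq\lambda_k$ by construction of clipping and $2-p\geq 0$, the interpolation $\|\tilde{\ug}_k\|^2\leq \lambda_k^{2-p}\|\tilde{\ug}_k\|^p$ is available. Because clipping cannot increase norms, $\|\tilde{\ug}_k\|^p\leq\|\bar{\ug}_k\|^p$. Applying the convexity bound $(a+b)^p\leq 2^{p-1}(a^p+b^p)$ to $\bar{\ug}_k=(\bar{\ug}_k-\ug_k)+\ug_k$, together with \ref{H4} and $\|\ug_k\|\leq L$, then gives $\bbE_k\|\bar{\ug}_k\|^p\leq 2^{p-1}(\sigma_m^p+L^p)=b_p$, and taking total expectation concludes.

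The main obstacle is the careful bookkeeping of the gap $\lambda_k-L$ in the bias estimate: if one only assumed $\lambda_k\geq L$ (i.e.\ $\varepsilon=0$), the denominator $(\lambda_k-L)^{p-1}$ could vanish and no polynomial bound in $\lambda_k$ of the desired form would follow. The role of the fixed constant $\varepsilon>0$ baked into $\Le$ in the schedules \ref{AT}, \ref{FH}, \ref{FH2} is precisely to guarantee that $\lambda_k-L$ stays a fixed fraction of $\lambda_k$, so that the constant $(1+1/\varepsilon)^{p-1}$ in $a_p$ remains finite (though it degenerates as $\varepsilon\to 0$, reflecting the price of this trick).
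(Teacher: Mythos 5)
Your proof is correct and follows essentially the paper's route (via the conditional bias and moment bounds of Lemma~\ref{lm:stat_prop}): reduce to the bias $\|\bbE_k[\tilde{\ug}_k]-\ug_k\|$ via the tower rule and Cauchy--Schwarz, control the clipping residual by the triangle inequality together with the $p$-th moment condition \ref{H4}, convert $(\lambda_k-L)^{1-p}$ into $a_p\lambda_k^{1-p}$ using $\lambda_k\geq\Le$, and for part (ii) interpolate $\|\tilde{\ug}_k\|^2\leq\lambda_k^{2-p}\|\tilde{\ug}_k\|^p$ and invoke $2^{p-1}$-convexity. The only difference is presentational: the paper writes the exceedance estimate with indicator functions $\chi,\chi'$, while you phrase it via the truncation $(\cdot)_+$ and the elementary inequality $(Y-t)_+\leq Y^p/t^{p-1}$, which is the same algebra.
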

%

\begin{restatable}{lemma}{recurrences}
\label{lm:recurrences}
Let $(\alpha_k)_{k \in \bbN}, (\beta)_{k \in \bbN}, (b_k)_{k \in \bbN}$ and $(c_k)_{k \in \bbN}$ be sequences in $\bbR_+$ such that
\begin{align}
\label{eq:recurrences}
\forall k \in \bbN\colon \beta_k + \alpha_{k+1} \leq \alpha_k + b_k \sqrt{\alpha_k} + c_k.
\end{align}
Then the following hold for every $k \in \N$
\begin{enumerate}[label={\rm(\roman*)}]
\item\label{lm:recurrences_ii} $\displaystyle \max_{1\leq i\leq k+1} \sqrt{\alpha_{i}} \leq \sum_{i=1}^k b_i + \sqrt{\alpha_1 + \sum_{i=1}^k c_i}$.
\item\label{lm:recurrences_i} $\displaystyle \sum_{i=1}^k \beta_i + \alpha_{k+1} \leq \frac{10}{9} \bigg(\alpha_1 + \sum_{i=1}^k c_i\bigg) + \bigg(\sum_{i=1}^k b_i\bigg)^{\!2}$.
\end{enumerate}
\end{restatable}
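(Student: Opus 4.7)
The plan is to derive both parts from the simplified recurrence $\alpha_{k+1} \leq \alpha_k + b_k\sqrt{\alpha_k} + c_k$, which follows from discarding the nonnegative term $\beta_k$ on the left-hand side of \eqref{eq:recurrences}. To ease notation I set
$$A := \max_{1\leq j \leq k+1} \sqrt{\alpha_j}, \qquad B := \sum_{i=1}^k b_i, \qquad D := \alpha_1 + \sum_{i=1}^k c_i.$$

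For \textbf{\ref{lm:recurrences_ii}}, I would telescope the simplified recurrence from $1$ to $j-1$, so that for every $j \in \{1,\dots,k+1\}$,
$$\alpha_j \leq \alpha_1 + \sum_{i=1}^{j-1} b_i \sqrt{\alpha_i} + \sum_{i=1}^{j-1} c_i \leq D + A \sum_{i=1}^{j-1} b_i \leq D + AB.$$
Taking the maximum over $j$ on the left produces the scalar quadratic inequality $A^2 \leq D + AB$. Solving for the positive root gives $A \leq (B + \sqrt{B^2+4D})/2$, and the elementary bound $\sqrt{B^2 + 4D} \leq B + 2\sqrt{D}$ collapses this to $A \leq B + \sqrt{D}$, which is exactly the stated estimate.

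For \textbf{\ref{lm:recurrences_i}}, I would sum the simplified recurrence over $i = 1,\dots,k$, let the $\alpha_i$ terms telescope, and obtain
$$\sum_{i=1}^k \beta_i + \alpha_{k+1} \leq \alpha_1 + \sum_{i=1}^k b_i\sqrt{\alpha_i} + \sum_{i=1}^k c_i = D + \sum_{i=1}^k b_i \sqrt{\alpha_i}.$$
Then I would bound the cross term crudely as $\sum_i b_i \sqrt{\alpha_i} \leq B \cdot A$ and invoke \ref{lm:recurrences_ii} to get $BA \leq B^2 + B\sqrt{D}$. To disentangle $D$ and $B$ in the residual $B\sqrt{D}$, I would apply a weighted Young's inequality $B\sqrt{D} \leq \tfrac{\lambda}{2} D + \tfrac{1}{2\lambda} B^2$ and tune $\lambda$ so that the coefficient of $D$ becomes the target $10/9$ and the coefficient of $B^2$ is the stated constant.

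The \textbf{main obstacle} is calibrating the Young split so that the two coefficients land exactly on $10/9$ and $1$: the quadratic $A^2 \leq D + AB$ couples the two quantities, and any naive substitution trades off a smaller $D$-coefficient against a larger $B^2$-coefficient. Reaching the precise constants asserted in the lemma requires either re-injecting \ref{lm:recurrences_ii} into $A^2 \leq D + AB$ and choosing the Young parameter optimally, or exploiting a slightly tighter manipulation of the quadratic (e.g.\ completing the square $(A-B/2)^2 \leq D + B^2/4$) before the Young step. Once the arithmetic is settled, the rest of the argument is routine telescoping.
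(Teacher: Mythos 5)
Your approach to \ref{lm:recurrences_ii} is correct and matches the paper's in substance: telescope, bound $\alpha_j \le D + AB$ uniformly, and solve the scalar quadratic $A^2 \le D + AB$ via $\sqrt{B^2+4D}\le B + 2\sqrt D$. (The paper dresses this up with an auxiliary increasing majorant $\gamma_k$, but the content is the same.) For \ref{lm:recurrences_i} your skeleton — telescope, bound $\sum_i b_i\sqrt{\alpha_i}\le AB$, insert \ref{lm:recurrences_ii}, and apply Young — is also the paper's skeleton, and your instinct that the constants do not fall out is the right one to trust.

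In fact the obstacle you flag is not a failure of calibration: the claimed constants $(10/9,\,1)$ are unattainable, because the stated inequality in \ref{lm:recurrences_i} is false. Take $k=2$, $\alpha_1=0$, $c_1=1$, $c_2=0$, $b_1=0$, $b_2=\tfrac12$, $\beta_1=\beta_2=0$, with the recurrence run as equality, giving $\alpha_2=1$ and $\alpha_3=\tfrac32$. The left-hand side is $\tfrac32$, while the right-hand side is $\tfrac{10}{9}+\tfrac14=\tfrac{49}{36}<\tfrac32$. More generally, with $D=1$ and $B=b$ free, any bound of the form $C_0 D + C_1 B^2$ must dominate $1+b$ for all $b>0$, which forces $C_1(C_0-1)\ge\tfrac14$; hence a coefficient $1$ on $(\sum b_i)^2$ requires at least $\tfrac54$ on $\alpha_1+\sum c_i$. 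Your route yields the legitimate family $\bigl(1+\tfrac{\lambda}{2}\bigr)D+\bigl(1+\tfrac{1}{2\lambda}\bigr)B^2$, which sits exactly on that frontier. The paper's own derivation contains the unjustified step $\frac{1}{2\varepsilon}A_k^2 \le \frac{1}{4\varepsilon\delta}\bigl(\sum_i b_i\bigr)^2 + \frac{\delta}{4\varepsilon}\bigl(\alpha_1+\sum_i c_i\bigr)$, which does not follow from \ref{lm:recurrences_ii} and leads to the incorrect constants. So you should not try to force the arithmetic to land on $10/9$; instead state the result with the tunable pair $\bigl(1+\tfrac{\lambda}{2},\,1+\tfrac{1}{2\lambda}\bigr)$ (or a fixed admissible choice such as $\lambda=1$, giving $\tfrac32$ on both), which is all that this line of argument — or, indeed, any argument — can deliver.
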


\begin{lemma}[A standard inequality]\label{lem:standard}
The iterates of \Cref{algorithm:clippedSsGM}, satisfy the following inequality
\begin{align}
\label{eq:standardineq}
\gamma_k (f(x_k)-f(\xx)) &\leq \frac{\norm{x_k-\xx}^2-\norm{x_{k+1}-\xx}^2}{2} + \gamma_k \scalarp{\tilde{\ug}_k-\ug_k,\xx-x_k} + \frac{\gamma_k^2}{2} \norm{\tilde{\ug}_k}^2,
\end{align}
for every $k \in \N$ and $\xx \in \XX$. Moreover, every $k \in \N$, $\norm{x_k}^2$ and $f(x_k)$ are summable random variables.
\end{lemma}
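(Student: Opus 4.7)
The proof proposal follows the standard template for analyzing projected (sub)gradient methods, with only minor modifications needed to handle the clipping bias. The plan is to (i) extract a telescoping bound from the update step using the nonexpansivity of $P_\XX$, (ii) combine it with a subgradient inequality provided by \ref{H3}, and (iii) prove the integrability claims by exploiting the deterministic bound $\norm{\tilde{\ug}_k}\leq \lambda_k$ induced by the clipping.

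First, since $\xx \in \XX$ and $P_\XX$ is $1$-Lipschitz, I would start from
\begin{equation*}
\norm{x_{k+1}-\xx}^2 \leq \norm{x_k - \gamma_k \tilde{\ug}_k - \xx}^2 = \norm{x_k-\xx}^2 - 2\gamma_k \scalarp{\tilde{\ug}_k, x_k-\xx} + \gamma_k^2 \norm{\tilde{\ug}_k}^2.
\end{equation*}
Rearranging gives $\gamma_k \scalarp{\tilde{\ug}_k, x_k-\xx} \leq \tfrac{1}{2}(\norm{x_k-\xx}^2-\norm{x_{k+1}-\xx}^2) + \tfrac{\gamma_k^2}{2}\norm{\tilde{\ug}_k}^2$. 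Next, because $x_k$ is $\mathcal{F}_k$-measurable and $\bm{\xi}^k$ is independent of $\mathcal{F}_k$, assumption \ref{H3} combined with standard measurability/Fubini arguments yields $\ug_k = \bbE_k[\hat{\ug}(x_k,\xi_1^k)] \in \partial f(x_k)$ almost surely. Convexity of $f$ then gives $f(x_k)-f(\xx) \leq \scalarp{\ug_k, x_k-\xx}$, and the identity
\begin{equation*}
\scalarp{\ug_k,x_k-\xx} = \scalarp{\tilde{\ug}_k,x_k-\xx} + \scalarp{\tilde{\ug}_k-\ug_k,\xx-x_k}
\end{equation*}
together with the telescoping bound above yields inequality \eqref{eq:standardineq}.

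For the integrability statement, I would use the deterministic clipping bound $\norm{\tilde{\ug}_k} \leq \lambda_k$ (immediate from the definition of \textsc{CLIP}). Fixing an arbitrary $\xx_0 \in \XX$, nonexpansivity of $P_\XX$ and the triangle inequality give
\begin{equation*}
\norm{x_{k+1}-\xx_0} \leq \norm{x_k - \gamma_k \tilde{\ug}_k - \xx_0} \leq \norm{x_k - \xx_0} + \gamma_k \lambda_k,
\end{equation*}
so by induction $\norm{x_k-\xx_0} \leq \norm{x_1-\xx_0} + \sum_{i=1}^{k-1}\gamma_i \lambda_i$, which is a deterministic constant. Consequently $\norm{x_k}^2$ is uniformly bounded (hence integrable), and the $L$-Lipschitz continuity of $f$ from \ref{H1} yields $|f(x_k)| \leq |f(\xx_0)| + L \norm{x_k-\xx_0}$, giving the second integrability claim.

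The computations themselves are routine; the only delicate point is justifying $\ug_k \in \partial f(x_k)$ almost surely, which requires carefully invoking the independence between $\bm{\xi}^k$ and the past, plus the Bochner-integrability guaranteed by \ref{H3}. Everything else is a textbook expansion.
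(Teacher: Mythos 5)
Your derivation of the main inequality \eqref{eq:standardineq} matches the paper's: nonexpansivity of $P_\XX$, expand the square, insert the subgradient inequality via $\ug_k \in \partial f(x_k)$, and rearrange. Both treatments handle the measurability of $\ug_k$ somewhat lightly, so no discrepancy there.

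Where you genuinely diverge is the integrability claim, and your route is actually more economical. The paper argues by induction on $k$: starting from \eqref{eq:20250322a}, it bounds $\norm{x_{k+1}-\xx}^2$ in terms of $\norm{x_k-\xx}^2$, $\norm{x_k-\xx}$, and the a.s.\ bounded quantities $\norm{\tilde{\ug}_k-\ug_k}$, $\norm{\ug_k}$, $\norm{\tilde{\ug}_k}$, and then uses that square-summability implies summability to propagate integrability from step $k$ to step $k+1$; summability of $f(x_k)$ is then obtained from \eqref{eq:standardineq} together with the fact that $f$ is bounded below on $\XX$ (assumption \ref{H2b}). You instead observe that $\norm{\tilde{\ug}_k}\leq \lambda_k$ holds deterministically, so each $x_k$ lies a.s.\ in a fixed bounded ball around $\xx_0$; this gives a deterministic a.s.\ bound on $\norm{x_k}^2$, and Lipschitz continuity of $f$ (rather than its lower boundedness) then bounds $|f(x_k)|$. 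Your argument avoids induction and the detour through \eqref{eq:standardineq}, giving a uniform a.s.\ bound rather than mere integrability at each $k$, and it is fully self-contained once the clipping bound is noted. The paper's version is slightly more robust in spirit (it would survive if the clipping bound were replaced by a weaker moment bound on $\tilde{\ug}_k$), but for the algorithm as stated your proof is cleaner and equally rigorous.
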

\begin{proof}
Let $\xx \in \XX$. It follows from the definition of $x_{k+1}$ and the nonexpansivity of $P_{\hspace{0.1ex}\XX}$ that
\begin{align}
\label{eq:20250322a}
\nonumber\norm{x_{k+1}-\xx}^2 &= \norm{P_{\hspace{0.15ex}\XX}(x_k - \gamma_k \tilde{\ug}_k)- P_{\hspace{0.15ex}\XX}(\xx)}^2 
\leq \norm{x_k -\xx - \gamma_k \tilde{\ug}_k}^2\\[1ex]
\nonumber&=\norm{x_k-\xx}^2 +2\gamma_k\scalarp{\xx-x_k,\tilde{\ug}_k} + \gamma_k \norm{\tilde{\ug}_k}^2\\[1ex]
&=\norm{x_k-\xx}^2 +2\gamma_k\scalarp{\xx-x_k,\tilde{\ug}_k- \ug_k} +2\gamma_k\scalarp{\xx-x_k, \ug_k}  + \gamma_k \norm{\tilde{\ug}_k}^2.
\end{align}
Now, since $u_k \in \partial f(x_k)$, $f(\xx)\geq f(x_k) + \scalarp{\xx-x_k, \ug_k}$ and hence
\begin{equation*}
\norm{x_{k+1}-\xx}^2 \leq 
\norm{x_k-x_*}^2 +2\gamma_k\scalarp{\xx-x_k,\tilde{\ug}_k- \ug_k} + 2\gamma_k (f(\xx)-f(x_k))  + \gamma_k \norm{\tilde{\ug}_k}^2.
\end{equation*}
Rearranging the terms the inequality follows.
Concerning the second part of the statement, we prove by induction that for every 
$k \in \N$, $\norm{x_k}^2$ is summable. Indeed the
statement is clearly true for $k=1$. Suppose that it is true for $k \in \N$. Then 
it follows from \eqref{eq:20250322a} that
\begin{equation*}
\norm{x_{k+1}-\xx}^2 \leq \norm{x_k-\xx}^2 +2\gamma_k\norm{\xx-x_k}\norm{\tilde{\ug}_k- \ug_k} +2\gamma_k\norm{\xx-x_k}\norm{\ug_k}  + \gamma_k \norm{\tilde{\ug}_k}^2.
\end{equation*}
Since $\norm{\tilde{\ug}_k- \ug_k}$ and $\norm{\ug_k}$ are bounded
and $\norm{\xx-x_k}$ is summable (being square summable),
it follows that $\norm{x_{k+1}-\xx}^2$ is summable. Finally the summability of $f(x_k)$
follows from \eqref{eq:standardineq} and the fact that $f$ is bounded from below on $\XX$ (being $\argmin_{\XX} f\neq \varnothing$).
\end{proof}



\subsection{Convergence in expectation}

We are now ready to prove the first main result of this section.
\begin{theorem}
\label{thm:avg_unbounded}
Referring to Algorithm~\ref{algorithm:clippedSsGM}, suppose that
 $(\gamma_k)_{k \in \bbN}$ is decreasing and that $(\lambda_k)_{k \in \bbN}$ is such that $\lambda_k \geq \Le$ for every $k \in \bbN$. Let  $\xx_* \in \argmin_\XX\hspace{-0.1ex} f$, $f_* = \min_{\XX} f$ and set, for every $k \in \N$,
 \begin{equation*}
A_k = a_p \sum_{i=1}^k \frac{\gamma_i}{\lambda_i^{p-1}},
\qquad B_k = b_p \sum_{i=1}^k \gamma_i^2 \lambda_i^{2-p},
\end{equation*}
where $a_p$ and $b_p$ are defined in equation \eqref{eq:constants}.
Then, for every $k \in \N$, the following hold.
\begin{enumerate}[label={\rm(\roman*)}]
 \item\label{eq:avg_unbounded} $\displaystyle \bbE [f(\bar{x}_k)]  - f_* 
 \leq \frac{1}{k} \sum_{i=1}^k \bbE[f(x_i)-f_*]
 \leq \frac{1}{k \gamma_k} \bigg[ \frac 5 9 \big( \norm{x_{1}-\xx_*}^2 + B_k\big) + 2 A_k^2 \bigg]$.
\item\label{eq:distance_bound} $\displaystyle \max_{i \in [k]}\bbE \norm{x_{i+1}-\xx_*} \leq  \norm{x_{1}-\xx_*} + 2 A_k
+ \sqrt{B_k}$\,.
\end{enumerate}
%
\end{theorem}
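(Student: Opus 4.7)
The natural starting point is the standard inequality from Lemma~\ref{lem:standard}. Taking conditional expectations and then total expectations in that inequality, and invoking Lemma~\ref{lm:useful_bounds} with $\xx = \xx_*$, one obtains, for every $k \in \N$,
\begin{equation*}
\gamma_k \bbE[f(x_k)-f_*] \leq \tfrac{1}{2}\bigl(\bbE\norm{x_k-\xx_*}^2 - \bbE\norm{x_{k+1}-\xx_*}^2\bigr) + \gamma_k a_p \lambda_k^{1-p} \bbE\norm{x_k-\xx_*} + \tfrac{1}{2}\gamma_k^2 b_p \lambda_k^{2-p}.
\end{equation*}
Using the concavity of the square root (Jensen), $\bbE\norm{x_k-\xx_*} \leq \sqrt{\bbE\norm{x_k-\xx_*}^2}$. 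Setting $\alpha_k = \bbE\norm{x_k-\xx_*}^2$, $\beta_k = 2\gamma_k \bbE[f(x_k)-f_*] \geq 0$, $b_k = 2\gamma_k a_p \lambda_k^{1-p}$, and $c_k = \gamma_k^2 b_p \lambda_k^{2-p}$, the display above rearranges into the recursion
\begin{equation*}
\beta_k + \alpha_{k+1} \leq \alpha_k + b_k \sqrt{\alpha_k} + c_k,
\end{equation*}
which is exactly the hypothesis of Lemma~\ref{lm:recurrences}. Observe that $\sum_{i=1}^k b_i = 2A_k$ and $\sum_{i=1}^k c_i = B_k$.

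To prove~\ref{eq:distance_bound}, I would apply Lemma~\ref{lm:recurrences}\ref{lm:recurrences_ii} and use $\sqrt{u+v} \leq \sqrt{u}+\sqrt{v}$ to split the root of $\alpha_1 + B_k$, giving
\begin{equation*}
\max_{1 \leq i \leq k+1}\sqrt{\alpha_i} \leq 2A_k + \norm{x_1-\xx_*} + \sqrt{B_k},
\end{equation*}
after which Jensen's inequality $\bbE\norm{x_{i+1}-\xx_*} \leq \sqrt{\alpha_{i+1}}$ delivers~\ref{eq:distance_bound}. For~\ref{eq:avg_unbounded}, I would apply Lemma~\ref{lm:recurrences}\ref{lm:recurrences_i}, obtaining
\begin{equation*}
2\sum_{i=1}^k \gamma_i \bbE[f(x_i)-f_*] \leq \tfrac{10}{9}\bigl(\norm{x_1-\xx_*}^2 + B_k\bigr) + (2A_k)^2.
\end{equation*}
Since $(\gamma_i)$ is decreasing and each summand on the left is nonnegative (because $x_i \in \XX$ and $f_* = \min_\XX f$), one has $\gamma_k \sum_{i=1}^k \bbE[f(x_i)-f_*] \leq \sum_{i=1}^k \gamma_i \bbE[f(x_i)-f_*]$. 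Dividing by $2 k \gamma_k$ yields the rightmost bound in~\ref{eq:avg_unbounded}. The leftmost inequality is Jensen's inequality applied to the convex function $f$ at $\bar{x}_k = \tfrac{1}{k}\sum_{i=1}^k x_i$.

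\textbf{Main obstacle.} The only nontrivial step is recognizing that the bias term coming from clipping produces a $\sqrt{\alpha_k}$ contribution in the one-step recursion, which prevents a direct telescoping of $\alpha_k$ and necessitates the unrolling statement of Lemma~\ref{lm:recurrences}. Once that lemma is in hand, the rest is bookkeeping: matching $b_k$ and $c_k$ to $A_k$ and $B_k$, exploiting nonnegativity of $f(x_i)-f_*$ together with monotonicity of the stepsizes, and using Jensen twice (once to reduce $\bbE\norm{\cdot}$ to $\sqrt{\bbE\norm{\cdot}^2}$, once for convexity of $f$).
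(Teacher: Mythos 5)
Your proof is correct and follows essentially the same route as the paper: start from the standard inequality of Lemma~\ref{lem:standard} at $\xx=\xx_*$, take expectations and use Lemma~\ref{lm:useful_bounds} together with $\bbE\norm{\cdot}\leq\sqrt{\bbE\norm{\cdot}^2}$ to land in the hypotheses of Lemma~\ref{lm:recurrences}, then invoke its two parts to obtain~\ref{eq:avg_unbounded} and~\ref{eq:distance_bound} after matching $\sum b_i$ and $\sum c_i$ to $A_k$ and $B_k$, using monotonicity of $(\gamma_k)$ plus nonnegativity of $f(x_i)-f_*$, and convexity of $f$. The only cosmetic difference is the normalization of $(\alpha_k,\beta_k,b_k,c_k)$: the paper sets $\alpha_k=\tfrac12\bbE\norm{x_k-\xx_*}^2$, $b_k=\sqrt2\,a_p\gamma_k/\lambda_k^{p-1}$, $c_k=\tfrac12 b_p\gamma_k^2\lambda_k^{2-p}$, while you scale everything by $2$; both bookkeeping choices produce the same constants $\tfrac59$ and $2A_k^2$.
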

\begin{proof}
\ref{eq:avg_unbounded}:
It follows from the standard inequality in Lemma~\ref{lem:standard} with $\xx=\xx_*\in \argmin_{\XX}\hspace{-0.1ex} f$,
that
\begin{align}
\label{eq:standardineq2}
\gamma_k (f(x_k)-f_*) &\leq \frac{\norm{x_k-\xx_*}^2-\norm{x_{k+1}-\xx_*}^2}{2} + \gamma_k \scalarp{\xx_*-x_k, \tilde{\ug}_k-\ug_k} + \frac{\gamma_k^2}{2} \norm{\tilde{\ug}_k}^2. 
\end{align}
Taking the expectation and recalling \Cref{lm:useful_bounds}, with $x = x_*$, and definitions \eqref{eq:constants}, we have
\begin{align*}
\gamma_k \bbE[f(x_k)-f_*] &\leq \frac{\bbE \norm{x_k-\xx_*}^2 - \bbE \norm{x_{k+1}-\xx_*}^2}{2} + \gamma_k \bbE\scalarp{\tilde{\ug}_k-\ug_k, \xx_*-x_k}\! +\! \frac{\gamma_k^2}{2}  \bbE \norm{\tilde{\ug}_k}^2 \\[1ex]
&\leq \frac{\bbE \norm{x_k-\xx_*}^2 - \bbE \norm{x_{k+1}-\xx_*}^2}{2} + a_p \gamma_k \frac{\bbE \norm{x_k-\xx_*}}{\lambda_k^{p-1}} + \frac{\gamma_k^2 b_p \lambda_k^{2-p}}{2} \\
&\leq \frac{\bbE \norm{x_k-\xx_*}^2 - \bbE \norm{x_{k+1}-\xx_*}^2}{2} + a_p \gamma_k \frac{\sqrt{\bbE \norm{x_k-\xx_*}^2}}{\lambda_k^{p-1}} + \frac{b_p \gamma_k^2 \lambda_k^{2-p}}{2} ,
\end{align*}
where in the last inequality we used that $\bbE Z \leq \sqrt{\bbE Z^2}$ for any positive random variable $Z$. Thus, by \Cref{lm:recurrences} with
\begin{align*}
\alpha_k = \frac 1 2 \bbE \norm{x_k-\xx_*}^2,\ \ \beta_k = \gamma_k \bbE[f(x_k)-f_*],\ \  b_k = \sqrt{2} a_p \gamma_k/\lambda_k^{p-1},\ \  c_k = \frac 1 2 b_p \gamma_k^2 \lambda_k^{2-p},
\end{align*}
we have
\begin{align}
\label{eq:thm_avg_unbounded_p1}
\nonumber\sum_{i=1}^k \gamma_i \bbE[f(x_i) - f_*] &+ \frac{1}{2} \bbE\norm{x_{k+1}-\xx_*}^2\\ 
&\leq \frac 5 9\bigg(\bbE \norm{x_1-\xx_*}^2 + b_p \sum_{i=1}^k \gamma_i^2 \lambda_i^{2-p} \bigg)+ 2 a_p^2 \Bigg(\sum_{i=1}^k \frac{\gamma_i}{\lambda_i^{p-1}} \Bigg)^{\!2}.
\end{align}
Moreover, since $f$ is convex and $(\gamma_k)_{k \in \bbN}$ is decreasing, we have
\begin{align*}
k \gamma_k \bbE[f(\bar{x}_k)-f_*] &\leq k \gamma_k \cdot \frac{1}{k} \sum_{i=1}^k \bbE[f(x_i)-f_*] \leq \sum_{i=1}^k \gamma_i \bbE[f(x_i)-f_*].
\end{align*}
and hence \ref{eq:avg_unbounded} follows.

\ref{eq:distance_bound}:
Using the previous notation, it follows from Lemma~\ref{lm:recurrences}\ref{lm:recurrences_ii}
by noting that, $\bbE \norm{x_{i+1}-\xx_*} \leq \sqrt{\bbE \norm{x_{i+1}-\xx_*}^2}$
and $\sqrt{\bbE \norm{x_{1}-\xx_*}^2} = \norm{x_1 - \xx_*}$ (being $x_1$ not random).
\end{proof}
\begin{remark}
\label{rmk:avg_unbounded}
The following comments are in order.
\begin{enumerate}[label={\rm(\roman*)}]
\item \Cref{thm:avg_unbounded}\ref{eq:avg_unbounded} provides a general convergence result for the expected values of the function $f$. Explicit rates of convergence can be obtained as long as
the sequences
\begin{equation*}
\frac{1}{k \gamma_k }, \quad \frac{1}{k\gamma_k} \Bigg( \sum_{i=1}^k \frac{\gamma_i}{\lambda_i^{p-1}} \Bigg)^2\!, 
\ \text{and}\ \ \frac{1}{k\gamma_k} \sum_{i=1}^k \gamma_i^2 \lambda_i^{2-p},
\end{equation*}
can be properly bounded.

\item \Cref{thm:avg_unbounded}\ref{eq:distance_bound} provides an \emph{a posteriori} bound on the distance between the iterates and any optimum, that will be critical for proving the convergence of the last iterate. 
\end{enumerate}
\end{remark}

We now specify the stepsizes and the clipping levels for both the anytime and finite horizon settings respectively. Plugging the rules \ref{AT} and \ref{FH} in Theorem~\ref{thm:avg_unbounded}
we obtain the following result (whose proof is in the Appendix~\ref{app:A}).
\begin{restatable}{corollary}{avgunbounded}
\label{crl:avg_unbounded}
Under the assumptions of Theorem~\ref{thm:avg_unbounded},
let $\bar{c} \coloneqq 2 (a_p/\lambda^{p-1})^{2} + b_p \max\{\Le, \lambda\}^{2-p}$. Then the following rates hold.
\begin{enumerate}[label={\rm(\roman*)}]
\item\label{crl:avg_unbounded_at}\textbf{Anytime.} Suppose that $\gamma_k$  and $\lambda_k $ are as in {\rm\ref{AT}}.
Then, for every $k\in \N$,
\begin{equation}
\label{eq:avg_unbounded_at}
\bbE[f(\bar{x}_k)]  - f_*
\leq \frac{(1+\log k)^{1/p} \cdot (1+\log (1 + \log k))^2 }{k^{(p-1)/p}} \Bigg[\frac{\norm{x_{1}-\xx_*}^2}{\gamma} + \gamma\bar{c}  \Bigg].
\end{equation}
%
\item\label{crl:avg_unbounded_fh}\textbf{Finite-horizon.} 
Suppose that $\gamma_k$  and $\lambda_k $ are as in {\rm\ref{FH}}.
Then, for every $k\in \N$,
\begin{equation}
\label{eq:avg_unbounded_fh}
\bbE[f(\bar{x}_k)]  - f_*
\leq \frac{1}{k^{(p-1)/p}} \Bigg[\frac{\norm{x_{1}-\xx_*}^2}{\gamma} + \gamma\bar{c}  \Bigg].
\end{equation}
%
\end{enumerate}
\end{restatable}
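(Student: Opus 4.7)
The plan is to specialize the general bound of Theorem~\ref{thm:avg_unbounded}\ref{eq:avg_unbounded}, which decomposes $\bbE[f(\bar{x}_k)]-f_*$ into three pieces controlled respectively by $1/(k\gamma_k)$, $A_k^2/(k\gamma_k)$ and $B_k/(k\gamma_k)$, to the two schedules \ref{AT} and \ref{FH}. Two uniform sandwich estimates on $\lambda_i$ will do most of the work: by construction both schedules yield the lower bound $\lambda_i \geq \lambda(i(1+\log i))^{1/p}$ (anytime) or $\lambda_i \geq \lambda k^{1/p}$ (finite horizon), which controls $\lambda_i^{p-1}$ in the denominator of $A_k$; and the elementary inequality $\max\{\Le,\lambda t\} \leq \max\{\Le,\lambda\}\max\{1,t\}$ for $t\geq 0$ gives the matching upper bound $\lambda_i \leq \max\{\Le,\lambda\}(i(1+\log i))^{1/p}$ (anytime) or $\lambda_i \leq \max\{\Le,\lambda\} k^{1/p}$ (finite horizon), which controls $\lambda_i^{2-p}$ in $B_k$.

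For part~\ref{crl:avg_unbounded_fh}, since $\gamma_i$ and $\lambda_i$ are constant in $i$, both sums reduce to $k$ times a single term. Tracking the powers of $k^{1/p}$ yields $A_k \leq a_p\gamma/\lambda^{p-1}$ and $B_k\leq b_p \gamma^2 \max\{\Le,\lambda\}^{2-p}$, both independent of $k$. Dividing each of the three pieces by $k\gamma_k = \gamma k^{(p-1)/p}$ and regrouping (absorbing the $5/9$ factor into the $b_p$ term using $5/9 \leq 1$) produces exactly the constants $2(a_p/\lambda^{p-1})^2$ and $b_p\max\{\Le,\lambda\}^{2-p}$ constituting $\bar{c}$, which gives the claimed $1/k^{(p-1)/p}$ rate.

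For part~\ref{crl:avg_unbounded_at}, after the sandwich the exponent algebra makes each summand of $A_k$ and $B_k$ proportional to $1/(i(1+\log i))$. The scalar key estimate is then
\begin{equation*}
\sum_{i=1}^k \frac{1}{i(1+\log i)} \leq 1 + \int_1^k \frac{dt}{t(1+\log t)} = 1 + \log(1+\log k),
\end{equation*}
since $\log(1+\log t)$ is the primitive of $1/(t(1+\log t))$. Consequently both $A_k$ and $B_k$ are of order $1+\log(1+\log k)$, with explicit constants matching those in $\bar{c}$. Combined with $k\gamma_k = \gamma k^{(p-1)/p}/(1+\log k)^{1/p}$, the initial-distance term picks up a factor $(1+\log k)^{1/p}/k^{(p-1)/p}$, while $B_k/(k\gamma_k)$ and $A_k^2/(k\gamma_k)$ pick up extra factors $1+\log(1+\log k)$ and $(1+\log(1+\log k))^2$ respectively (the square coming from squaring $A_k$). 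Majorizing uniformly by $(1+\log(1+\log k))^2$ gives the stated bound.

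The only real bookkeeping obstacle is verifying that the polylog weights in $\gamma_i$ exactly cancel those appearing in $\lambda_i^{p-1}$ (inside $A_k$) and in $\lambda_i^{p-2}$ (inside $B_k$), producing per-iterate contributions of order $1/(i(1+\log i))$; once this cancellation is in hand, the rest of the argument is routine substitution combined with the one-line logarithmic-integral estimate above.
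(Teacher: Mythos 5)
Your proposal is correct and follows essentially the same route as the paper: plug the two schedules into Theorem~\ref{thm:avg_unbounded}\ref{eq:avg_unbounded}, observe that the polylog factors in $\gamma_i$ and $\lambda_i$ cancel so that each summand of $A_k$ and $B_k$ is $\lesssim 1/(i(1+\log i))$ (or constant in the finite-horizon case), bound the resulting sum with the logarithmic integral (Lemma~\ref{lm:technical_lemma} in the paper), absorb the $5/9$ factor by $5/9\le 1$, and majorize by $(1+\log(1+\log k))^2$ using that this quantity is $\ge 1$. One cosmetic slip: in your closing paragraph you write ``$\lambda_i^{p-2}$ inside $B_k$'' where you mean $\lambda_i^{2-p}$ (as you correctly stated earlier); since $2-p\ge 0$ this is a positive power of $\lambda_i$, which is precisely why the upper sandwich $\lambda_i\le\max\{\Le,\lambda\}(i(1+\log i))^{1/p}$ is the relevant one there.
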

\begin{remark}
The following comments are in order.
%
\begin{enumerate}[label={\rm(\roman*)}]
\item The rate in the finite horizon setting is of order  $1/k^{(p-1)/p}$ which is optimal as it matches (up to constants) the lower bound in \cite{Vural2022}. 
\item The bound \eqref{eq:avg_unbounded_at} improves upon the result in \cite{Liu2023b}, that instead features a convergence rate of order  $(\log^2 k)/k^{(p-1)/p}$. Furthermore, our parameter setting allows any clipping level in the range $\left]L,\infty\right[$, whereas \cite{Liu2023b} contraints the clipping level in the interval $\left[2L,\infty\right[$, which is limiting in practice---especially when $L$ is large as is often the case in high-dimensions. We demonstrate the advantages of our parameter setting in the numerical experiments section.
\item The rate in the anytime setting is (ignoring polylogarithmic factors in $\log k$) of order  $(\log^{1/p} k)/k^{(p-1)/p}$, which is worse by a $\log^{1/p} k$ factor than its finite-horizon counterpart. It remains unclear if this factor is truly necessary, or if a more careful analysis and/or a different parameter settings could eliminate it. Indeed, our experimental results suggest that an anytime parameter setting exhibits superior performance in practice.
\end{enumerate}
\end{remark}

\subsection{Convergence in high probability}

In this section we provide an equivalent of \Cref{thm:avg_unbounded} in the case of convergence in high probability. We start by considering the following (standard) implication of the Freedman inequality \cite{Freedman1975,Fan2025}.
\begin{proposition}[Freedman's inequality]
\label{prop:FreedmanBound}
Let $(\zeta_i)_{1\leq i\leq k}$ be a martingale difference sequence\footnote{This means that $\bbE[\zeta_i]<+\infty$ and $\bbE[\zeta_i\,\vert\, \zeta_1,\dots, \zeta_{i-1}]=0$, for every $1\leq i\leq k$, with $\zeta_0=0$.} such that for all $i \in [k]$
\begin{enumerate}[label={\rm(\roman*)}]
\item $\zeta_i \leq c$ a.s.,\ $(c>0)$
\item $\sigma_i^2 \coloneqq \bbE[\zeta_i^2 |\zeta_1,\dots,\zeta_{i-1}] < \infty$ a.s.
\end{enumerate}
Let $\delta \in \left]0, 1\right[$, set $V_k = \sum_{i=1}^k \sigma_i^2$, and let $F\geq 0$ s.t. $V_k \leq F$ almost surely. Then, with probability at least  $1-\delta/2$, we have
\begin{align*}
\forall\, j \in [k]\colon \sum_{i=1}^j \zeta_i \leq \frac{2}{3} c \log\left(\frac{2}{\delta}\right) + \sqrt{ 2 F\log\left(\frac{2}{\delta}\right)}.
\end{align*}
\end{proposition}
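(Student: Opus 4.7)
The statement is a standard consequence of the classical Freedman maximal inequality, so my plan is to invoke it and then perform a routine algebraic inversion to isolate $\lambda$ on the right-hand side.

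Step~1: Apply the classical Freedman maximal inequality (see Freedman, 1975; or the modern exposition in Fan, 2025). Under the two assumptions on $(\zeta_i)_{1\leq i\leq k}$, for every $\lambda>0$ and every $\sigma^2>0$,
\begin{equation*}
\mathbb{P}\!\left(\exists\, j\in[k]\colon \sum_{i=1}^j \zeta_i \geq \lambda \ \text{ and } \ V_j \leq \sigma^2 \right) \leq \exp\!\left(-\frac{\lambda^2/2}{\sigma^2+c\lambda/3}\right).
\end{equation*}
This is the version that controls all partial sums simultaneously, which is essential for the ``$\forall j\in[k]$'' conclusion of the proposition.

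Step~2: Specialize $\sigma^2=F$. Because the $\sigma_i^2$ are nonnegative, $V_j\leq V_k\leq F$ almost surely for every $j\in[k]$, so the event $\{V_j\leq F\}$ can be dropped. Setting the exponential bound equal to $\delta/2$ leads to the equation $\lambda^2/2=(F+c\lambda/3)\log(2/\delta)$, i.e., the quadratic
\begin{equation*}
\lambda^2 - \tfrac{2c}{3}\log(2/\delta)\,\lambda - 2F\log(2/\delta)=0.
\end{equation*}
Any $\lambda$ exceeding the positive root of this quadratic gives the desired tail probability bounded by $\delta/2$.

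Step~3: Solve explicitly and simplify. The positive root is
\begin{equation*}
\lambda_* = \tfrac{c}{3}\log(2/\delta) + \sqrt{\tfrac{c^2}{9}\log^2(2/\delta) + 2F\log(2/\delta)},
\end{equation*}
and applying $\sqrt{a+b}\leq \sqrt a+\sqrt b$ bounds this by $\tfrac{2c}{3}\log(2/\delta)+\sqrt{2F\log(2/\delta)}$, which matches the expression in the statement. Taking complements gives the desired $1-\delta/2$ high-probability bound uniformly over $j\in[k]$.

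The only real subtlety is ensuring that the cited form of Freedman's inequality is the \emph{maximal} one (controlling all partial sums at once), rather than a fixed-$j$ version that would require a union bound and an extra $\log k$ factor; everything else is elementary algebra. The choice of confidence $\delta/2$ (rather than $\delta$) is a matter of calibration that anticipates a subsequent union bound with a twin lower-tail inequality in the paper's later results.
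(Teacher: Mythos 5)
Your proof is correct. The paper itself does not prove this proposition; it is stated as a ``standard implication of the Freedman inequality'' with citations to Freedman (1975) and Fan--Grama--Liu (2015), so there is no in-paper argument to compare against. Your derivation supplies exactly the missing details: invoking the maximal (uniform-over-$j$) form of Freedman's inequality, noting that the deterministic bound $V_j\leq V_k\leq F$ lets you drop the variance restriction from the event, solving the resulting quadratic in $\lambda$, and then loosening via $\sqrt{a+b}\leq\sqrt a+\sqrt b$ to produce the clean additive form in the statement. Your side remark about needing the maximal (rather than fixed-$j$) version to avoid a union bound and a spurious $\log k$ is precisely the right thing to flag, and the observation that the $\delta/2$ calibration anticipates a later union with a second martingale term is consistent with how the proposition is used in the proof of Theorem 3 (where it is applied twice, to $\textsc{Dev}(\tilde u_i)$ and to $\textsc{Dev}^2(\tilde u_i)$, each at level $\delta/2$). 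No gaps.
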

We can now prove the following convergence rate in high probability.

%
\begin{theorem}
\label{thm:avg_unbounded_hp}
Referring to Algorithm~\ref{algorithm:clippedSsGM}, suppose that
 $(\gamma_k)_{k \in \bbN}$ is decreasing and that $(\lambda_k)_{k \in \bbN}$ is such that $\lambda_k \geq \Le$ for every $k \in \bbN$. Let  $\xx_* \in \argmin_{\XX}\hspace{-0.1ex} f$ and $f_* = \min_{\XX} f$. Then for $\delta \in\left]0,1\right[$ and for every $k \in \bbN$, with probability at least $1-\delta$ it holds 
\begin{align}
\label{eq:avg_unbounded_hp}
f(\bar{x}_k) -f_* \leq \frac{1}{2 k\gamma_k}  & \Bigg( \norm{x_{1}-\xx_*} + A_k + B_k \nonumber\\
&\qquad+\frac{8}{3}\big(C_k + D_k \big) \cdot \log \Big(\frac{2}{\delta}\Big) + \Big(\sqrt{B_k} + \sqrt{E_k} \Big) \cdot \sqrt{\log \Big(\frac{2}{\delta}\Big)} \Bigg)^2,
\end{align}
where $A_k$ and $B_k$ are defined in \Cref{thm:avg_unbounded}, 
\begin{align*}
C_k = \max_{i \in [k]}\gamma_i \lambda_i, \quad D_k = \max_{i \in [k]}\gamma_i^2 \lambda_i^2, \quad E_k = b_p \sum_{i=1}^k \gamma_i^4 \lambda_i^{4-p},
\end{align*}
and $a_p$ and $b_p,$ are defined in equation \eqref{eq:constants}.
\end{theorem}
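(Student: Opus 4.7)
The plan is to parallel the proof of Theorem~\ref{thm:avg_unbounded}, but control the stochastic quantities via Freedman's inequality (Proposition~\ref{prop:FreedmanBound}) rather than by taking expectations. Setting $\alpha_i=\tfrac{1}{2}\|x_i-\xx_*\|^2$ and $\beta_i=\gamma_i(f(x_i)-f_*)$, I would decompose each stochastic term in the standard inequality (Lemma~\ref{lem:standard}) into its conditional mean plus a martingale difference:
$$\gamma_i\langle\tilde{\ug}_i-\ug_i,\xx_*-x_i\rangle = \gamma_i\langle\bbE_i[\tilde{\ug}_i]-\ug_i,\xx_*-x_i\rangle + \zeta_i^{(1)},\quad \tfrac{\gamma_i^2}{2}\|\tilde{\ug}_i\|^2 = \tfrac{\gamma_i^2}{2}\bbE_i[\|\tilde{\ug}_i\|^2] + \zeta_i^{(2)}.$$
The conditional means admit the almost-sure bounds $\|\bbE_i[\tilde{\ug}_i]-\ug_i\|\leq a_p/\lambda_i^{p-1}$ and $\bbE_i[\|\tilde{\ug}_i\|^2]\leq b_p\lambda_i^{2-p}$, which are the estimates underlying Lemma~\ref{lm:useful_bounds}. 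Summing over $i\in[j]$ then produces, for every $j\leq k$, a recursion
$$\alpha_{j+1} + \sum_{i=1}^j\beta_i \leq \alpha_1 + \sum_{i=1}^j b_i\sqrt{\alpha_i} + \sum_{i=1}^j c_i + \sum_{i=1}^j\bigl(\zeta_i^{(1)}+\zeta_i^{(2)}\bigr),$$
with $b_i=\sqrt{2}\gamma_i a_p/\lambda_i^{p-1}$ and $c_i=\gamma_i^2 b_p\lambda_i^{2-p}/2$, so that $\sum_{i=1}^k b_i$ and $\sum_{i=1}^k c_i$ are proportional to $A_k$ and $B_k$ respectively.

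Next, I would apply Freedman's inequality to each of the two martingales. Using the almost-sure bound $\|\tilde{\ug}_i\|\leq\lambda_i$, one obtains $|\zeta_i^{(1)}|\leq 2\gamma_i\lambda_i\|x_i-\xx_*\|$ with conditional variance $\bbE_i[(\zeta_i^{(1)})^2]\leq b_p\gamma_i^2\lambda_i^{2-p}\|x_i-\xx_*\|^2$, and $|\zeta_i^{(2)}|\leq\gamma_i^2\lambda_i^2$ with $\bbE_i[(\zeta_i^{(2)})^2]\leq b_p\gamma_i^4\lambda_i^{4-p}/4$. Since the uniform bound required by Proposition~\ref{prop:FreedmanBound} involves the random quantity $\|x_i-\xx_*\|$, I would introduce $R\coloneqq\max_{1\leq i\leq k+1}\|x_i-\xx_*\|$ and replace every occurrence of $\|x_i-\xx_*\|$ by $R$. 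Applying the proposition to each martingale with parameter $\delta$ (each failure event of probability $\delta/2$) and union-bounding gives, with probability at least $1-\delta$ and uniformly in $j\in[k]$,
$$\sum_{i=1}^j\bigl(\zeta_i^{(1)}+\zeta_i^{(2)}\bigr) \leq \tfrac{4}{3}\bigl(C_k R + D_k\bigr)\log(2/\delta) + \bigl(R\sqrt{2B_k}+\sqrt{2E_k}\bigr)\sqrt{\log(2/\delta)},$$
which, up to absolute constants, matches the log-structure of the theorem's bound.

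The main obstacle is the self-referential appearance of $R$: the Freedman estimate is written in terms of $R$, yet $R$ is precisely the statistic one is trying to control. To resolve this, I would substitute the Freedman bound into the summed recursion, drop $\sum_{i=1}^{j}\beta_i$ on the left, take the maximum over $j$, and use $\sum b_i\sqrt{\alpha_i}\leq(\max_i\sqrt{\alpha_i})\cdot\sum b_i$. This produces a quadratic inequality in $R$ of the form $R^2\leq\alpha_1 + U R + V$, in which $U$ collects $\sum b_i$ together with the $R$-linear Freedman coefficients and $V$ collects $\sum c_i$ together with the $R$-free log terms; equivalently, this is Lemma~\ref{lm:recurrences}\ref{lm:recurrences_ii} applied after absorbing the Freedman contributions into the $b_i$'s and $c_i$'s. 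Solving via $\sqrt{a+b}\leq\sqrt{a}+\sqrt{b}$ yields $R\leq U+\sqrt{\alpha_1+V}$. Substituting this back into the bound for $\sum_{i=1}^{k}\beta_i$, and using the identity $\alpha_1+V+UR\leq(U+\sqrt{\alpha_1+V})^2$ (expand the square and drop the nonnegative cross term), gives $\sum_{i=1}^k\beta_i\leq(U+\sqrt{\alpha_1+V})^2$. Finally, convexity of $f$ and monotonicity of $(\gamma_i)$ give $k\gamma_k(f(\bar{x}_k)-f_*)\leq\sum_{i=1}^k\beta_i$; dividing by $2k\gamma_k$ and identifying the terms of $U$, $V$, $\sqrt{\alpha_1}$ with $\|x_1-\xx_*\|$, $A_k$, $B_k$, $C_k$, $D_k$, $E_k$, and the $\log(2/\delta)$ factors produces exactly the bound stated in the theorem.
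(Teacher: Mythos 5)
Your proposal correctly identifies the overall structure (standard inequality, decompose into bias and martingale parts, Freedman, plus a recursion to pin down the iterates' radius), but there is a genuine gap precisely at the point you flag as "the main obstacle," and your proposed resolution does not close it. Proposition~\ref{prop:FreedmanBound} requires a \emph{deterministic} almost-sure bound $\zeta_i\leq c$ and a \emph{deterministic} bound $F$ on the variance proxy $V_k$. With $\zeta_i^{(1)}=\gamma_i\langle\tilde{\ug}_i-\bbE_i[\tilde{\ug}_i],\xx_*-x_i\rangle$, the best pointwise bound is $|\zeta_i^{(1)}|\leq 2\gamma_i\lambda_i\|x_i-\xx_*\|$, and likewise the conditional variance is scaled by $\|x_i-\xx_*\|^2$; replacing $\|x_i-\xx_*\|$ by $R=\max_{1\leq i\leq k+1}\|x_i-\xx_*\|$ does not make $c$ or $F$ deterministic --- $R$ is a random variable depending on the entire trajectory (and is not even $\mathcal F_i$-measurable for $i\leq k$). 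So the displayed ``Freedman estimate'' involving $R$ is not a consequence of the proposition; it was never validly derived. The subsequent step --- substitute this estimate into the recursion and solve the quadratic for $R$ --- cannot repair this, because it manipulates an inequality that does not hold. A stopping-time or peeling argument could in principle convert your idea into a proof, but you do not propose one.

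The paper's proof avoids this difficulty by a different normalization that you did not anticipate: before summing the standard inequality over $i$, each side is divided by the \emph{running} maximum $\cD_i=\max\{\|x_1-\xx_*\|,\dots,\|x_i-\xx_*\|,1\}$, which is $\mathcal F_i$-measurable and increasing. This has two effects. First, the normalized martingale increments $\textsc{Dev}(\tilde{\ug}_i)=\langle\tilde{\ug}_i-\bbE_i[\tilde{\ug}_i],(x_i-\xx_*)/\cD_i\rangle$ satisfy $|\gamma_i\textsc{Dev}(\tilde{\ug}_i)|\leq 2\gamma_i\lambda_i$ and $\bbE_i[(\gamma_i\textsc{Dev}(\tilde{\ug}_i))^2]\leq b_p\gamma_i^2\lambda_i^{2-p}$ --- both \emph{deterministic} --- so Freedman applies directly with constants $c=2C_k$ and $F=2B_k$. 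Second, because $(\cD_i)$ is increasing, summing the normalized inequalities still telescopes, yielding an inequality of the form $\|x_{j+1}-\xx_*\|^2\leq \cD_k\cdot\cC_k$ uniformly in $j$, from which $\cD_k\leq\cC_k$ follows by taking the maximum and dividing. The self-referential quadratic you anticipate does appear, but only after Freedman has been validly applied; the key missing ingredient in your argument is the $\mathcal F_i$-measurable running-max normalization that makes the Freedman hypotheses hold.
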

\begin{proof}
Let $k \in \N$.
Set, for the sake of brevity, for every $i \in [k]$, $\Delta(x_i) = f(x_i)-f_*$. It follows from the standard inequality \eqref{eq:standardineq} 
and item \ref{eq:norm_squared} in Lemma~\ref{lm:useful_bounds}, that
%
\begin{align}
\label{eq:step1}
2 \gamma_i \Delta(x_i) + \norm{x_{i+1}-\xx_*}^2 &\leq \norm{x_{i}-\xx_*}^2 + 2 \gamma_i \scalarp{\xx_*-x_i, \tilde{\ug}_i-\ug_i} + \gamma_i^2 \norm{\tilde{\ug}_i}^2 \nonumber \\[1ex]
&= \norm{x_{i}-\xx_*}^2 + 2 \gamma_i \big(\scalarp{\xx_*-x_i, \tilde{\ug}_i-\bbE_i \tilde{\ug}_i} + \scalarp{\xx_*-x_i, \bbE_i \tilde{\ug}_i-\ug_i}\big)  \nonumber \\[1ex]
&\qquad+ \gamma_i^2 (\norm{\tilde{\ug}_i}^2-\bbE_i \norm{\tilde{\ug}_i}^2 + \bbE_i \norm{\tilde{\ug}_i}^2) \nonumber\\[1ex]
&\leq \norm{x_{i}-\xx_*}^2 + 2 \gamma_i \big(\scalarp{\xx_*-x_i, \tilde{\ug}_i-\bbE_i \tilde{\ug}_i} + \scalarp{\xx_*-x_i, \bbE_i \tilde{\ug}_i-\ug_i}\big)\nonumber \\[1ex]
&\qquad + \gamma_i^2 \big[(\norm{\tilde{\ug}_i}^2-\bbE_i \norm{\tilde{\ug}_i}^2) + b_p \cdot \lambda_i^{2-p}\big].
\end{align}
Now, we define $\cD_i = \max\{\norm{x_{1}-\xx_*},\dots,\norm{x_{i}-\xx_*}, 1\}$, and notice that $\cD_i \geq 1$ for each $i \in [k]$. Thus multiplying both sides of \eqref{eq:step1} by $1/\cD_i$ yields
\begin{align}
\label{eq:step_2}
\nonumber2 \gamma_i \frac{\Delta(x_i)}{\cD_i} &+ \frac{\norm{x_{i+1}-\xx_*}^2}{\cD_i}\\[1ex]
&\leq \frac{\norm{x_i-\xx_*}^2}{\cD_i} + 2 \gamma_i \bigg(\Big\langle \frac{\xx_*-x_i}{\cD_i}, \tilde{\ug}_i-\bbE_i \tilde{\ug}_i\Big\rangle 
+ \Big\langle\frac{\xx_*-x_i}{\cD_i},\bbE_i \tilde{\ug}_i-\ug_i\Big\rangle\bigg)\nonumber \\[1ex]
&\ \qquad\qquad\qquad + \frac{\gamma_i^2}{\cD_i} [(\norm{\tilde{\ug}_i}^2-\bbE_i \norm{\tilde{\ug}_i}^2) + b_p \cdot \lambda_i^{2-p}] \nonumber \\[1ex]
&\leq \frac{\norm{x_i-\xx_*}^2}{\cD_i} + 2 \gamma_i \bigg(\Big\langle \frac{\xx_*-x_i}{\cD_i}, \tilde{\ug}_i-\bbE_i \tilde{\ug}_i\Big\rangle 
+ \Big\langle\frac{\xx_*-x_i}{\cD_i},\bbE_i \tilde{\ug}_i-\ug_i\Big\rangle\bigg)\nonumber \\[1ex]
&\ \qquad\qquad\qquad + \gamma_i^2 [(\norm{\tilde{\ug}_i}^2-\bbE_i \norm{\tilde{\ug}_i}^2) + b_p \cdot \lambda_i^{2-p}].
\end{align}
Next, let $j\in [k]$. Since $(\cD_i)_{i\in[j]}$ is increasing, summing all the inequalities in \eqref{eq:step_2} from $1$ to $j$ yields\footnote{Recall that for every sequence
 $(c_i)_{i \in [j+1]}\in \R_+^{j+1}$ and decreasing sequence $(\alpha_i)_{i \in [j]}\in \R_+^j$, we have
\begin{equation*}
\sum_{i=1}^j (\alpha_i c_{i}-\alpha_i c_{i+1}) = a_1 c_1 +\sum_{i=2}^{j-1} \alpha_i c_{i} - 
\sum_{i=1}^{j-1} \alpha_i c_{i+1} - \alpha_j c_{j+1}= 
a_1 c_1- \alpha_j c_{j+1} + \sum_{i=2}^j (\underbrace{\alpha_i - \alpha_{i-1}}_{\leq 0})c_i
\end{equation*}}
\begin{align*}
2 \sum_{i=1}^j \gamma_i &\frac{\Delta(x_i)}{\cD_i} + \frac{\norm{x_{j+1}-\xx_*}^2}{\cD_j}\\
& \leq \frac{\norm{x_1-\xx_*}^2}{\cD_1}+ 2 \sum_{i=1}^j \gamma_i \bigg(\Big\langle\tilde{\ug}_i-\bbE_i [\tilde{\ug}_i],\frac{x_i-\xx_*}{\cD_i}\Big\rangle + \Big\langle\bbE_i [\tilde{\ug}_i]-\ug_i,\frac{x_i-\xx_*}{\cD_i}\Big\rangle\bigg) \\[1ex]
&\ \qquad\qquad\qquad+ \sum_{i=1}^j \gamma_i^2 \big[(\norm{\tilde{\ug}_i}^2-\bbE_i \norm{\tilde{\ug}_i}^2) + b_p \cdot \lambda_i^{2-p}\big] \\[1ex]
&\leq \norm{x_1-\xx_*}+ 2 \sum_{i=1}^j \gamma_i \bigg(\Big\langle\tilde{\ug}_i-\bbE_i [\tilde{\ug}_i],\frac{x_i-\xx_*}{\cD_i}\Big\rangle + \Big\langle\bbE_i [\tilde{\ug}_i]-\ug_i,\frac{x_i-\xx_*}{\cD_i}\Big\rangle\bigg) \\[1ex]
&\ \qquad\qquad\qquad+ \sum_{i=1}^j \gamma_i^2 \big[(\norm{\tilde{\ug}_i}^2-\bbE_i \norm{\tilde{\ug}_i}^2) + b_p \cdot \lambda_i^{2-p}\big], 
\end{align*}
where in the last inequality we used the definition of $\cD_1$. Finally, since $\cD_j \geq \cD_i$, we obtain
\begin{align}
\label{eq:step3}
2 \sum_{i=1}^j \gamma_i \Delta(x_i) + \norm{x_{j+1}-\xx_*}^2 \leq \cD_j \Bigg(&\norm{x_1-\xx_*} + 2 \sum_{i=1}^j \gamma_i (\textsc{Dev}(\tilde{\ug}_i) + \textsc{Bias}(\tilde{\ug}_i)) \nonumber \\
&\ \qquad\qquad+ \sum_{i=1}^j \gamma_i^2 \big(\textsc{Dev}^2(\tilde{\ug}_i) + b_p \cdot \lambda_i^{2-p}\big)\Bigg),
\end{align}
where we defined
\begin{align*}
\textsc{Bias}(\tilde{\ug}_i) &= \Big\langle\bbE_i [\tilde{\ug}_i] - \ug_i,\frac{x_i-\xx_*}{\cD_i}\Big\rangle, \\
\textsc{Dev}(\tilde{\ug}_i) &= \Big\langle\tilde{\ug}_i-\bbE_i [\tilde{\ug}_i],\frac{x_i-\xx_*}{\cD_i}\Big\rangle, \\[1ex]
\textsc{Dev}^2(\tilde{\ug}_i) &= \norm{\tilde{\ug}_i}^2-\bbE_j \norm{\tilde{\ug}_i}^2.
\end{align*}
Now we analyze each of the terms in the right hand side of \eqref{eq:step3} separately. 

\noindent\textbf{$\textsc{Bias}(\tilde{\ug}_i)$.} For the bias term, applying \Cref{lm:stat_prop}\ref{lm:stat_prop_bias}
and noting that $\cD_i \geq \norm{x_i- \xx_*}$, one has
\begin{align}
\label{eq:step4_1}
\sum_{i=1}^j \gamma_i \textsc{Bias}(\tilde{\ug}_i) = \sum_{i=1}^j \gamma_i \scalarp{\bbE_i \tilde{\ug}_i - \ug_i,\frac{x_i-\xx_*}{\cD_i}} \leq \sum_{i=1}^j \gamma_i \norm{\bbE_i \tilde{\ug}_i - \ug_i} \leq a_p \cdot \sum_{i=1}^k \gamma_i \lambda_i^{1-p}.
\end{align}
\textbf{$\textsc{Dev}(\tilde{\ug}_i)$.} 
Since $ \gamma_i \textsc{Dev}(\tilde{\ug}_i) = \gamma_i \scalarp{\tilde{\ug}_i-\bbE_i[\tilde{\ug}_i],(x_i-\xx_*)/\cD_i}$, and $\cD_i$ depends on $x_1,\dots,x_i$, we have that $\textsc{Dev}(\tilde{\ug}_i)$ is measurable w.r.t.~$\mathcal{F}_{i+1}$.
Therefore, setting $\mathcal{G}_i =\sigma(\textsc{Dev}(\tilde{\ug}_1),\dots, \textsc{Dev}(\tilde{\ug}_i))$, we have $\mathcal{G}_i \subset \mathcal{F}_{i+1}$ and hence
\begin{equation*}
\bbE[\gamma_i \textsc{Dev}(\tilde{\ug}_i)\,\vert\, \mathcal{G}_{i-1}]
= \bbE\big[\underbrace{\bbE[\gamma_i \textsc{Dev}(\tilde{\ug}_i)\,\vert\, \mathcal{F}_{i}]}_{=0} \,\vert\, \mathcal{G}_{i-1}\big] =0.
\end{equation*}
Moreover
\begin{align*}
|\gamma_i  \textsc{Dev}(\tilde{\ug}_i)| \leq \gamma_i \norm{\tilde{\ug}_i-\bbE_i [\tilde{\ug}_i]} \leq 2 \cdot \gamma_i \lambda_i \leq 2\cdot\max_{i \in [k]} \gamma_i \lambda_i \eqqcolon c, 
\end{align*}
and, by  \Cref{lm:stat_prop}\ref{lm:stat_prop_var}, it also holds
\begin{align*}
\sigma_i^2 :&= \bbE\big[(\gamma_i \textsc{Dev}(\tilde{\ug}_i))^2\,\big\vert\, \mathcal{G}_{i-1}\big]
\leq \gamma_i^2 \bbE\big[\norm{\tilde{\ug}_i-\bbE_i [\tilde{\ug}_i]}^2\,\big\vert\, \mathcal{G}_{i-1}\big]\\
&=\gamma_i^2 \bbE\big[\bbE_i[\norm{\tilde{\ug}_i-\bbE_i [\tilde{\ug}_i]}^2]\,\big\vert\, \mathcal{G}_{i-1}\big]
 \leq b_p \cdot \gamma_i^2 \lambda_i^{2-p}. 
\end{align*}
Then, for $\delta \in (0, 2/e]$, by Freedman inequality it follows that with probability at least $1-\delta/2$ it holds
\begin{align}
\label{eq:step4_2}
\forall\,j \in [k]\colon
\sum_{i=1}^j \gamma_i \textsc{Dev}(\tilde{\ug}_i) \leq \frac{4}{3} \bigg(\max_{i \in [k]} \gamma_i \lambda_i \bigg) \cdot \log \frac{2}{\delta} + \sqrt{2 b_p \cdot \sum_{i=1}^k \gamma_i^2 \lambda_i^{2-p} \cdot \log \frac{2}{\delta}}.
\end{align}
\textbf{$\textsc{Dev}^2(\tilde{\ug}_i)$.} Since $ \gamma_i^2 \textsc{Dev}^2(\tilde{\ug}_i) = \gamma_i^2 (\norm{\tilde{\ug}_i}^2-\bbE_i [\norm{\tilde{\ug}_i}^2])$, it is clear that $\gamma_i^2 \textsc{Dev}^2(\tilde{\ug}_i)$ is measurable w.r.t.~$\mathcal{F}_{i+1}$
and hence $\mathcal{G}_i := \sigma(\textsc{Dev}(\norm{\tilde{\ug}_1}^2,\dots, \textsc{Dev}(\norm{\tilde{\ug}_i}^2) \subset \mathcal{F}_i$. Thus,
\begin{equation*}
\bbE\big[ \gamma_i^2 \textsc{Dev}^2(\tilde{\ug}_i)\,\vert\, \mathcal{G}_{i-1}\big] =
\bbE\big[ \underbrace{\bbE_i \big[\gamma_i^2 \textsc{Dev}(\norm{\tilde{\ug}_i}^2)\big]}_{=0} \,\vert\, \mathcal{G}_{i-1}\big] =0.
\end{equation*}
Moreover
\begin{align*}
|\gamma_i^2 \textsc{Dev}(\norm{\tilde{\ug}_i}^2)| 
= \gamma_i^2 \big\lvert\norm{\tilde{\ug}_i}^2-\bbE_i \norm{\tilde{\ug}_i}^2\big\rvert \leq 2 \cdot \gamma_i^2 \lambda_i^2 \leq 2\cdot\max_{i \in [k]} \gamma_i^2 \lambda_i^2 \eqqcolon c,
\end{align*}
and, by \Cref{lm:stat_prop}\ref{lm:stat_prop_moment}, it also holds
\begin{align*}
\sigma_i^2 &\coloneqq \bbE\big[ (\gamma_i^2 \textsc{Dev}(\norm{\tilde{\ug}_i}^2))^2\,\big\vert\, \mathcal{G}_{i-1}\big]
= \gamma_i^4\bbE\big[ (\norm{\tilde{\ug}_i}^2-\bbE_i \norm{\tilde{\ug}_i}^2)^2 \,\big\vert\, \mathcal{G}_{i-1}\big]\\
&= \gamma_i^4 \bbE\big[\bbE_i [(\norm{\tilde{\ug}_i}^2-\bbE_i \norm{\tilde{\ug}_i}^2)^2] \,\big\vert\, \mathcal{G}_{i-1}\big]\\
&= \gamma_i^4 \bbE\big[\bbE_i \norm{\tilde{\ug}_i}^4 \,\big\vert\, \mathcal{G}_{i-1}\big] = \gamma_i^4 \bbE\big[\bbE_i [\norm{\tilde{\ug}_i}^2 \norm{\tilde{\ug}_i}^2] \,\big\vert\, \mathcal{G}_{i-1}\big]  \leq b_p \cdot \gamma_i^4 \lambda_i^{4-p}. 
\end{align*}
Then, for $\delta \in \left]0, 2/e\right]$, by Freedman inequality it follows that that with probability at least $1-\delta/2$ it holds
\begin{align}
\label{eq:step4_3}
\forall\,j \in [k]\colon
\sum_{i=1}^j \gamma_i \textsc{Dev}(\norm{\tilde{\ug}_i}^2) \leq \frac{4}{3} \bigg(\max_{i \in [k]} \gamma_i^2 \lambda_i^2 \bigg) \cdot \log \frac{2}{\delta} + \sqrt{2 b_p \cdot \sum_{i=1}^k \gamma_i^4 \lambda_i^{4-p} \cdot \log \frac{2}{\delta}}.
\end{align}
Thus combining \eqref{eq:step4_1}, \eqref{eq:step4_2}, and \eqref{eq:step4_3} into \eqref{eq:step3}, we showed that with probability at least $1-\delta$, we have that for every $j \in [k]$
\begin{align}
\label{eq:step4}
2 \sum_{i=1}^j \gamma_i \Delta(x_i) + \norm{x_{j+1}-\xx_*}^2 &\leq \cD_k \Bigg[\norm{x_1-\xx_*} + \frac{8}{3} \Bigg(  \max_{i \in [k]} \gamma_i \lambda_i + \max_{i \in [k]} \gamma_i^2 \lambda_i^2 \Bigg) \cdot \log \frac{2}{\delta} \nonumber\\
&\quad\qquad+\Bigg( \sqrt{2 b_p \cdot \sum_{i=1}^k \gamma_i^2 \lambda_i^{2-p}} + \sqrt{2 b_p \cdot \sum_{i=1}^k \gamma_i^4 \lambda_i^{4-p}} \Bigg) \cdot \sqrt{\log \frac{2}{\delta}} \nonumber\\
&\quad\qquad+ a_p \cdot \sum_{i=1}^k \gamma_i \lambda_i^{1-p} + b_p \cdot \sum_{i=1}^k \gamma_i^2 \lambda_i^{2-p} \Bigg].    
\end{align}
Next, we define
\begin{align*}
\cC_k = \norm{x_1-\xx_*} &+ \frac{8}{3} \Bigg(  \max_{i \in [k]} \gamma_i \lambda_i + \max_{i \in [k]} \gamma_i^2 \lambda_i^2 \Bigg) \cdot \log \frac{2}{\delta} \nonumber\\
&+\Bigg( \sqrt{b_p \cdot \sum_{i=1}^k \gamma_i^2 \lambda_i^{2-p}} + \sqrt{b_p \cdot \sum_{i=1}^k \gamma_i^4 \lambda_i^{4-p}} \Bigg) \cdot \sqrt{\log \frac{2}{\delta}} \nonumber\\
&+ a_p \cdot \sum_{i=1}^k \gamma_i \lambda_i^{1-p} + b_p \cdot \sum_{i=1}^k \gamma_i^2 \lambda_i^{2-p}.
\end{align*}
Thus, assuming that the event $\cE$ in \eqref{eq:step4} happens, we have
\begin{align*}
\forall\, j\in [k]\colon\ \norm{x_{j+1}-\xx_*}^2 &\leq \cD_k \cdot \cC_k
\end{align*}
This implies that $\cD_k^2\leq \cD_k\cdot\cC_k\ \Rightarrow\ \cD_k\leq \cC_k$ and hence
%
\begin{align*}
2 \sum_{i=1}^k \gamma_i \Delta(x_i)
\leq  \cC_k^2\ \ \text{on}\ \cE.
\end{align*}
Finally, noting that $(\gamma_k)_{k \in \bbN}$ is decreasing and using the convexity of $f$, one obtains
that with probability at least $1- \delta$ we have
\begin{align*}
\Delta(\bar{x}_k) \leq \frac{1}{k} \sum_{i=1}^k \Delta(x_i) \leq \frac{\cC_k^2}{2 \gamma_k k}
\end{align*}
and the statement follows.
\end{proof}

\begin{remark}
\Cref{thm:avg_unbounded_hp} is an analog of \Cref{thm:avg_unbounded} in terms of convergence in high probability. In particular, in addition to the conditions in \Cref{rmk:avg_unbounded}, it also requires to control the following  sequences 
\begin{align*}
\frac{1}{k \gamma_k } \bigg(\max_{i \in [k]}\gamma_i \lambda_i\bigg)^2,\ \ \frac{1}{k \gamma_k } \bigg(\max_{i \in [k]}\gamma_i^2 \lambda_i^2\bigg)^2,\ \ \frac{1}{k \gamma_k } \bigg(\sum_{i=1}^k \gamma_i^2 \lambda_i^{2-p}\bigg)^2,\ \text{and}\ \frac{1}{k \gamma_k } \sum_{i=1}^k \gamma_i^4 \lambda_i^{4-p}.
\end{align*}
\end{remark}
Plugging the rules \ref{AT} and \ref{FH} in Theorem~\ref{thm:avg_unbounded} we have the following result (whose proof is in the Appendix~\ref{app:A}).
\begin{restatable}{corollary}{crlavgunboundedhp}
\label{crl:avg_unbounded_hp}
Under the assumptions of Theorem~\ref{thm:avg_unbounded}, the following rates hold.
\begin{enumerate}[label={\rm(\roman*)}]
\item\label{crl:avg_unbounded_hp_at}\textbf{Anytime.} Suppose that $\gamma_k$  and $\lambda_k $ are as in {\rm\ref{AT}} and that $\gamma = 1/\log(2/\delta)$.
Then for $\delta \in\left]0,2/e\right]$ and for every $k \in \bbN$, with probability at least $1-\delta$ it holds that
\begin{equation}
\label{eq:avg_unbounded_hp_at}
f(\bar{x}_k)  - f_*
\lesssim \frac{(1+\log k)^{1/p} }{k^{(p-1)/p}} \bigg((1+\log (1+\log k))^2+\log\bigg(\frac{2}{\delta}\bigg)\bigg).
\end{equation}
\item\label{crl:avg_unbounded_hp_fh}\textbf{Finite-horizon.} Suppose that $\gamma_k$  and $\lambda_k $ are as in {\rm\ref{FH}}, and that $\gamma = 1/\log(2/\delta)$. Then for $\delta \in\left]0,2/e\right]$ and for every $k \in \bbN$, with probability at least $1-\delta$ it holds that
\begin{equation}
\label{eq:avg_unbounded_hp_fh}
f(\bar{x}_k)  - f_*
\lesssim \frac{1}{k^{(p-1)/p}} \bigg(1+\log\bigg(\frac{2}{\delta}\bigg)\bigg).
\end{equation}
\end{enumerate}
\end{restatable}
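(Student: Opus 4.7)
The plan is to apply Theorem~\ref{thm:avg_unbounded_hp} and, just as in Corollary~\ref{crl:avg_unbounded}, specialize the six $k$-dependent summaries $1/(k\gamma_k)$, $A_k$, $B_k$, $C_k$, $D_k$, $E_k$ under the schedules \ref{AT} and \ref{FH}; the remaining work is to expand the square in \eqref{eq:avg_unbounded_hp} carefully and substitute $\gamma = 1/\log(2/\delta)$.

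Three of these six quantities, namely $1/(k\gamma_k)$, $A_k$, and $B_k$, already appear in the expectation analysis, so their bounds can be imported verbatim from the proof of Corollary~\ref{crl:avg_unbounded}. Specifically, under \ref{FH} (in the dominant regime $\lambda k^{1/p}\geq \Le$) every summand defining $A_k$ and $B_k$ equals a constant in $k$ divided by $k$, hence $A_k, B_k \lesssim \text{poly}(\gamma)$, and $1/(k\gamma_k) = 1/(\gamma k^{(p-1)/p})$. Under \ref{AT} the same summands scale as $1/(i(1+\log i))$, so the integral test yields $A_k, B_k \lesssim \text{poly}(\gamma)\cdot\log(1+\log k)$, and $1/(k\gamma_k) = (1+\log k)^{1/p}/(\gamma k^{(p-1)/p})$.

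The three new summaries are handled analogously. Since $\gamma_i\lambda_i \leq \gamma \max\{\Le,\lambda\}$ in both schedules, $C_k \lesssim \gamma$ and $D_k \lesssim \gamma^2$. For $E_k$, the summand $\gamma_i^4\lambda_i^{4-p}$ is $\cO(1/k)$ under \ref{FH}, giving $E_k \lesssim \gamma^4$; and $\cO(1/(i(1+\log i)))$ under \ref{AT}, giving $E_k \lesssim \gamma^4\log(1+\log k)$.

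The one delicate step is the final bookkeeping. Applying $(\sum_j a_j)^2 \leq n\sum_j a_j^2$ to the parenthesis in \eqref{eq:avg_unbounded_hp}, the terms free of $\log(2/\delta)$ --- namely $\|x_1-x_*\|^2$, $A_k^2$, $B_k^2$ --- contribute the polyloglog (resp.~constant) part of the bound, while the $(C_k+D_k)\log(2/\delta)$ piece contributes a $\gamma^2\log^2(2/\delta)$ term; the cross-terms $\sqrt{B_k}\sqrt{\log(2/\delta)}$ and $\sqrt{E_k}\sqrt{\log(2/\delta)}$ are dominated by the previous two via weighted AM--GM. The substitution $\gamma = 1/\log(2/\delta)$ is tailored so that $\gamma^2\log^2(2/\delta) = 1$, collapsing the would-be quadratic confidence cost to $\cO(1)$. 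Multiplying by the prefactor $1/(k\gamma_k)$, which itself carries one factor of $\log(2/\delta)$, produces the announced \emph{linear} $\log(2/\delta)$ overhead in the claim. The main obstacle is precisely this balance: the tuning $\gamma = 1/\log(2/\delta)$ must neutralize exactly the $\log^2(2/\delta)$ contributions coming from $C_k$ and $D_k$ without amplifying the polyloglog terms coming from $A_k^2$, $B_k^2$, and $E_k$.
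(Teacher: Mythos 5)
Your proposal matches the paper's proof: bound the five summaries $A_k,B_k,C_k,D_k,E_k$ (plus $1/(k\gamma_k)$) under \ref{AT}/\ref{FH}, plug them into \eqref{eq:avg_unbounded_hp}, expand the square, and let $\gamma=1/\log(2/\delta)$ kill the quadratic confidence cost; all of your intermediate bounds (e.g.\ $C_k\lesssim\gamma$, $D_k\lesssim\gamma^2$, $E_k\lesssim\gamma^4\alpha_k$) agree with what the paper computes, and the final accounting is the same. One small mislabeling: after multiplying by the prefactor $1/(k\gamma_k)\propto\log(2/\delta)$, the $\norm{x_1-\xx_*}^2$ term lands in the $\log(2/\delta)$ bucket rather than the polyloglog/constant bucket, but since you correctly note that the prefactor carries that extra $\log(2/\delta)$, the argument still closes.
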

\begin{remark}
Notice that the above convergence rates in high probability are only (up to constants) a $\log(1/\delta)$ factor worse than their in expectation counterpart. Moreover, similarly to the case of the convergence in expectation, our results improves the results in \cite{Liu2023b} from  $(\log^2 k)/k^{(p-1)/p}$ to $(\log^{1/p} k)/k^{(p-1)/p}$.
\end{remark}

\section{Analysis of the last iterate}
\label{sec:LI}
Differently from the analysis of the average iterate, for the last iterate we present two different analysis of the convergence in expectation depending on whether the horizon $k$ is known before running the algorithm (and can then be used to tune the parameters) or not.

We start from the following basic lemma that will serve both finite horizon and infinite horizon settings.
\begin{lemma}
\label{lm:li_basic_lemma_fh_unbounded}
Considering the sequence $(x_k)_{k \in \N}$ generated by Algorithm~\ref{algorithm:clippedSsGM},
let $k \in \bbN$ be fixed, and pick $1 \leq i_0 \leq i_1 \leq k$. Then the following holds.
\begin{equation}
\label{eq:li_basic_lemma_fh_unbounded}
\sum_{i=i_0}^{i_1} \gamma_i \bbE[(f(x_i)-f(x_{i_0}))] \leq a_p \sum_{i=i_0}^{i_1} \frac{\gamma_i}{\lambda_i^{p-1}} \bbE \norm{x_i-x_{i_0}} + \frac{b_p}{2} \sum_{i=i_0}^{i_1} \gamma_i^2 \lambda_i^{2-p}.
\end{equation}
\end{lemma}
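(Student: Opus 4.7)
The idea is to iterate the standard inequality of Lemma~\ref{lem:standard} with the \emph{random} anchor point $\xx = x_{i_0}$, telescope the distance terms, and then bound the cross and norm-squared terms through Lemma~\ref{lm:useful_bounds}. Concretely, fix $i \in \{i_0, \ldots, i_1\}$. Since $x_{i_0} \in \XX$ almost surely, the (pathwise) inequality \eqref{eq:standardineq} applies with $\xx = x_{i_0}$:
\begin{equation*}
\gamma_i (f(x_i)-f(x_{i_0})) \leq \frac{\norm{x_i-x_{i_0}}^2-\norm{x_{i+1}-x_{i_0}}^2}{2} + \gamma_i \scalarp{\tilde{\ug}_i-\ug_i, x_{i_0}-x_i} + \frac{\gamma_i^2}{2} \norm{\tilde{\ug}_i}^2.
\end{equation*}

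Summing over $i$ from $i_0$ to $i_1$, the distance-squared contributions telescope, and since $\norm{x_{i_0}-x_{i_0}}^2 = 0$ the resulting boundary term equals $-\tfrac12 \norm{x_{i_1+1}-x_{i_0}}^2 \leq 0$. This leaves
\begin{equation*}
\sum_{i=i_0}^{i_1} \gamma_i (f(x_i) - f(x_{i_0})) \leq \sum_{i=i_0}^{i_1} \gamma_i \scalarp{\tilde{\ug}_i - \ug_i, x_{i_0} - x_i} + \frac{1}{2} \sum_{i=i_0}^{i_1} \gamma_i^2 \norm{\tilde{\ug}_i}^2.
\end{equation*}
Taking expectation and invoking Lemma~\ref{lm:useful_bounds}\ref{eq:norm_squared} on the last sum yields the $b_p/2$ term directly.

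The remaining step is to apply Lemma~\ref{lm:useful_bounds}\ref{eq:scalarp} with the random anchor $x_{i_0}$ in place of the deterministic $\xx$. This is the only nontrivial point and thus the main obstacle. The key observation is that $x_{i_0}$ is $\mathcal{F}_{i_0}$-measurable and hence $\mathcal{F}_i$-measurable for every $i \geq i_0$, so it behaves as a constant once we condition on $\mathcal{F}_i$. By the tower property and linearity,
\begin{equation*}
\bbE \scalarp{\tilde{\ug}_i - \ug_i, x_{i_0} - x_i} = \bbE \scalarp{\bbE_i[\tilde{\ug}_i] - \ug_i, x_{i_0} - x_i} \leq \bbE \big(\norm{\bbE_i[\tilde{\ug}_i] - \ug_i} \cdot \norm{x_i - x_{i_0}}\big),
\end{equation*}
so the bound $\norm{\bbE_i[\tilde{\ug}_i] - \ug_i} \leq a_p \lambda_i^{1-p}$ that underlies the proof of Lemma~\ref{lm:useful_bounds}\ref{eq:scalarp} yields
\begin{equation*}
\bbE \scalarp{\tilde{\ug}_i - \ug_i, x_{i_0} - x_i} \leq a_p \lambda_i^{1-p} \bbE \norm{x_i - x_{i_0}}.
\end{equation*}

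Plugging these two bounds back and rearranging gives exactly \eqref{eq:li_basic_lemma_fh_unbounded}. The proof is short: it is essentially the derivation of the average-iterate bound in Theorem~\ref{thm:avg_unbounded}, but started at the iterate $x_{i_0}$ rather than at a minimizer $\xx_*$, and stopped at $i_1$ rather than run to infinity. The only subtle point worth flagging explicitly in the write-up is the measurability justification for using $x_{i_0}$ as the anchor inside Lemma~\ref{lm:useful_bounds}\ref{eq:scalarp}.
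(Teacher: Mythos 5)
Your proof is correct and follows essentially the same route as the paper's: apply the standard inequality of Lemma~\ref{lem:standard} with anchor $\xx = x_{i_0}$, telescope, take expectations, and invoke Lemma~\ref{lm:useful_bounds}. The one place where you are more explicit than the paper is in justifying the use of the random anchor $x_{i_0}$ inside Lemma~\ref{lm:useful_bounds}\ref{eq:scalarp} via $\mathcal{F}_i$-measurability and the tower property, which is indeed the only subtle point and is worth spelling out as you did.
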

\begin{proof}
Again from the standard inequality \eqref{eq:standardineq} with $x=x_{i_0}$ we have 
\begin{align*}
\gamma_i(f(x_i)-f(x_{i_0})) &\leq \frac{1}{2} \norm{x_i-x_{i_0}}^2 - \frac{1}{2} \norm{x_{i+1}-x_{i_0}}^2 + \gamma_i \scalarp{\tilde{\ug}_i-\ug_i, x_{i_0}-x_i} + \frac{1}{2} \gamma_i^2 \norm{\tilde{\ug}_i}^2.
\end{align*}
Summing all the above inequalities from $i_0$ to $i_1$ and taking the expectation of both sides, one obtains
\begin{align*}
\sum_{i=i_0}^{i_1} \gamma_i \bbE[(f(x_i) - f(x_{i_0}))] &\leq \frac{1}{2} \sum_{i=i_0}^{i_1} \bbE[\norm{x_i-x_{i_0}}^2 - \norm{x_{i+1}-x_{i_0}}^2]\\ 
&\qquad+ \sum_{i=i_0}^{i_1} \gamma_i \bbE \big[\bbE_{i_0}\scalarp{\tilde{\ug}_i-\ug_i, x_{i_0}-x_i} \big]
+ \frac{1}{2} \sum_{i=i_0}^{i_1} \gamma_i^2 \bbE \norm{\tilde{\ug}_i}^2. 
\end{align*}
The first sum is bounded from above by 0 as a result of the telescoping. Using \Cref{lm:useful_bounds} with $\xx=x_{i_0}$ on the second and the third sums leads to
\begin{align*}
\sum_{i=i_0}^{i_1} \gamma_i \bbE[(f(x_i)-f(x_{i_0}))] &\leq a_p \sum_{i=i_0}^{i_1} \frac{\gamma_i}{\lambda_i^{p-1}} \bbE \norm{x_i-x_{i_0}} + \frac{1}{2} b_p \sum_{i=i_0}^{i_1} \gamma_i^2 \lambda_i^{2-p}.    
\end{align*}
The statement follows.
\end{proof}

\subsection{Finite-horizon setting}
The study of the convergence of the last iterate relies on reducing the related error to that of the 
the average iterate, which is already known to be optimal. This reduction involves partitioning the execution time into \emph{epochs}: the behavior of the algorithm in the first epoch mirrors that of the average iterate, while the errors in the subsequent epochs can be related to one another and collectively contribute with an overall term of order $1/k^{(p-1)/p}$.

In this section, we consider the policy \ref{FH2} for the stepsize and the clipping level and
 start with a result taken from \cite{Jain2021}, reporting the proof in Appendix~\ref{app:B} for completeness.
\begin{restatable}{lemma}{keyepochlenght}
\label{lm:epoch_length}
Let  $(k_j)_{0 \leq j\leq n-1}$ be the sequence defined in {\rm\ref{FH2}}. Then,
\begin{equation*}
\forall\, j \in \{0,\dots,n-1\}\colon\ 4(k_{j+2}-k_{j+1}) \geq (k_{j+1}-k_j).
\end{equation*}
\end{restatable}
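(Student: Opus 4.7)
The plan is to rewrite all the differences $k_{j+1}-k_j$ in terms of the ceilings $a_j := \lceil k/2^j \rceil$, so that the inequality becomes a purely arithmetic statement about a doubly-halving integer sequence. The key observation is the identity $\lceil \lceil x \rceil / 2 \rceil = \lceil x/2 \rceil$, which yields $a_{j+1} = \lceil a_j/2 \rceil$, and hence $a_j - a_{j+1} = \lfloor a_j/2 \rfloor$. Thus, whenever both $k_j$ and $k_{j+1}$ are given by the closed-form $k_\ell = k - a_\ell$, we have $k_{j+1} - k_j = \lfloor a_j/2 \rfloor$.

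I would then split into two cases according to how $k_{j+2}$ is defined. In the \emph{interior case} $j \leq n-2$, both terms are given by the closed formula, so the target becomes $4 \lfloor a_{j+1}/2 \rfloor \geq \lfloor a_j/2 \rfloor$. Using $a_j \leq 2 a_{j+1}$ on the right and $\lfloor a_{j+1}/2 \rfloor \geq (a_{j+1}-1)/2$ on the left, this reduces to $a_{j+1} \geq 2$. That, in turn, follows from the constraint $n \geq j+2$ together with the definition $n = \lceil \log_2 k \rceil$, which forces $k > 2^{n-1} \geq 2^{j+1}$, so $a_j \geq 3$ and therefore $a_{j+1} \geq 2$. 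In the \emph{boundary case} $j = n-1$, the closed form fails at index $n+1$ and one uses the direct definition $k_{n+1} = k$. Then $k_{n+1} - k_n = a_n$, while $k_n - k_{n-1} = \lfloor a_{n-1}/2 \rfloor \leq a_n$ (from $a_{n-1} \leq 2 a_n$), and the inequality $4 a_n \geq a_n \geq \lfloor a_{n-1}/2 \rfloor$ holds with a lot of slack.

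The main obstacle I anticipate is precisely the boundary case: one must resist the temptation to apply the closed-form expression uniformly, since $k - \lceil k/2^{n+1} \rceil$ need not equal $k$. Isolating this boundary issue and handling it as a separate sub-case makes the rest of the argument a routine bookkeeping with floors and ceilings; a sanity check on the two degenerate configurations ($k = 2^n$ exactly, where $a_n = 1$, and $k$ just above a power of two, where $a_n$ jumps by one) confirms that the split exhausts all possibilities.
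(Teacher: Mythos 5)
Your proof is correct but follows a genuinely different route from the paper's. The paper works directly with the two-sided bound $2\lceil x\rceil - 1 \le \lceil 2x\rceil \le 2\lceil x\rceil$ applied at two scales, producing the chain $k_{j+1}-k_j \le 2(\lceil k/2^{j+1}\rceil - \lceil k/2^{j+2}\rceil) + 1 = 2(k_{j+2}-k_{j+1})+1 \le 4(k_{j+2}-k_{j+1})$, with the final step invoking that $k_{j+2}-k_{j+1}$ is an integer $\ge 1$. You instead introduce $a_j=\lceil k/2^j\rceil$, use the nesting identity $\lceil\lceil x\rceil/2\rceil=\lceil x/2\rceil$ to get $a_{j+1}=\lceil a_j/2\rceil$ and hence $k_{j+1}-k_j=\lfloor a_j/2\rfloor$ in the interior, and then reduce the target to the elementary arithmetic fact $4\lfloor m/2\rfloor \ge \lfloor m'/2\rfloor$ whenever $m'\le 2m$ and $m\ge 2$. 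Both are short; the paper's one-liner is more compressed, while yours trades a little extra bookkeeping for more transparent invariants. One genuine advantage of your version is the explicit split at $j=n-1$: the paper's chain tacitly writes $\lceil k/2^{j+1}\rceil - \lceil k/2^{j+2}\rceil = k_{j+2}-k_{j+1}$, which is only an equality for $j\le n-2$ because $k_{n+1}$ is defined to be $k$ rather than $k-\lceil k/2^{n+1}\rceil$; the inequality survives in the needed direction (since the override only makes $k_{n+1}-k_n$ larger), but this is not flagged, whereas you isolate and settle it cleanly. Your argument for $a_{j+1}\ge 2$ from $j\le n-2$ and $k>2^{n-1}\ge 2^{j+1}$ is also the same ingredient the paper needs, stated implicitly, to justify $k_{j+2}-k_{j+1}\ge 1$.
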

%

The next lemma relates the best iterate values between two successive epochs and is the key element enabling a reduction to the average iterate error. 
%
\begin{restatable}{lemma}{epochlink}
\label{lm:epoch_link_unbounded}
Referring to Algorithm~\ref{algorithm:clippedSsGM}, let $k \in \N$
and suppose that $(\gamma_i)_{1 \leq i \leq k}$ and $(\lambda_i)_{1 \leq i \leq k}$ are chosen as in {\rm\ref{FH2}}. 
Let $\min\bar{f}_j= \min_{i \in E_j} \bbE[f(x_i)]$ be the minimum value in expectation of the function $f$ within the $j$-th epoch.
Then 
for every $j \in \{0,1,\dots,n-1\}$
\begin{align}
\label{eq:epoch_link_unbounded}
\min\bar{f}_{j+1}- \min\bar{f}_j
\leq \frac{5}{2^{(p-1)j} k^{(p-1)/p}} \Bigg[ \norm{x_1-\xx_*} v_p + \gamma
\bigg( \frac{v_p}{\sqrt{k}}+\sqrt{w_p} \bigg)^2 
 \Bigg],
\end{align}
where $v_p = 4 a_p/\lambda^{p-1}$, $w_p = b_p \max\{\Le, \lambda \}^{2-p}$.
\end{restatable}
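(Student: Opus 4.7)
The plan is to apply the basic inequality of \Cref{lm:li_basic_lemma_fh_unbounded} on an index window spanning part of $E_j$ and all of $E_{j+1}$, so as to link the two epoch minima. Concretely, I would pick $i_j^* \in E_j$ attaining $\min\bar f_j$ and invoke \Cref{lm:li_basic_lemma_fh_unbounded} with $i_0 = i_j^*$ and $i_1 = k_{j+2}$. The key observation is that inside $E_{j+1}$ every summand $\gamma_i \bbE[f(x_i)-f(x_{i_j^*})]$ is bounded below by $\gamma_{j+1}(\min\bar f_{j+1}-\min\bar f_j)$, while the remaining summands over $i \in \{i_j^*,\dots,k_{j+1}\} \subset E_j$ are non-negative (since $\bbE[f(x_i)] \geq \min\bar f_j$) and can be dropped. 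Since $\gamma_i \equiv \gamma_{j+1} = \gamma/(2^{j+1} k^{1/p})$ on $E_{j+1}$, this produces
\[
|E_{j+1}|\gamma_{j+1}(\min\bar f_{j+1} - \min\bar f_j) \leq a_p\!\!\sum_{i=i_j^*}^{k_{j+2}}\!\!\frac{\gamma_i}{\lambda_i^{p-1}} \bbE\norm{x_i - x_{i_j^*}} + \frac{b_p}{2}\!\!\sum_{i=i_j^*}^{k_{j+2}}\!\!\gamma_i^2 \lambda_i^{2-p}.
\]

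Next, I would bound $\bbE\norm{x_i - x_{i_j^*}} \leq 2(\norm{x_1 - \xx_*} + 2A_k + \sqrt{B_k})$ uniformly via the triangle inequality combined with \Cref{thm:avg_unbounded}\ref{eq:distance_bound}. On the schedule \ref{FH2}, within each epoch $E_{j'}$ one has $\gamma_{j'}/\lambda_{j'}^{p-1} = \gamma/(2^{j'p} k^{1/p} M^{p-1})$ and $\gamma_{j'}^2 \lambda_{j'}^{2-p} = \gamma^2 M^{2-p}/(2^{j'p} k^{2/p})$, with $M := \max\{\Le,\lambda k^{1/p}\}$ and $|E_{j'}| \lesssim k/2^{j'}$; the resulting per-epoch terms decay like $1/2^{j'(p+1)}$ and form geometric series dominated by their $j'=0$ contribution. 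The inequalities $M^{p-1} \geq \lambda^{p-1} k^{(p-1)/p}$ and $M^{2-p}/k^{1/p} \leq \max\{\Le,\lambda\}^{2-p}/k^{(p-1)/p}$ (the latter obtained by case analysis on whether $M = \Le$ or $M = \lambda k^{1/p}$, using $1/p \geq (p-1)/p$ for $p \in \left]1,2\right]$) then reduce $A_k$ and $\sqrt{B_k}$ to constant multiples of $\gamma v_p$ and $\gamma \sqrt{w_p}$ respectively.

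Applying the same per-epoch evaluation to the two sums in the right-hand side restricted to $E_j \cup E_{j+1}$ and dividing by $|E_{j+1}|\gamma_{j+1} \asymp \gamma k^{(p-1)/p}/2^{2j}$ produces the prefactor $1/(2^{j(p-1)} k^{(p-1)/p})$, which yields the claimed bound after collecting the contributions. The main obstacle I anticipate is the precise bookkeeping required to package the linear and mixed contributions of $v_p$ and $\sqrt{w_p}$ into the factorized form $(v_p/\sqrt{k} + \sqrt{w_p})^2$; in particular, the cross term $\gamma v_p \sqrt{w_p}/\sqrt{k}$ must be identified as the product of the $v_p$ factor from the $a_p$-sum with the $\sqrt{B_k}$ part of the distance bound. \Cref{lm:epoch_length} is used tacitly here to ensure $|E_{j+1}| \gtrsim |E_j|$, guaranteeing that $|E_{j+1}|\gamma_{j+1}$ is large enough to absorb the portion of the right-hand side falling in $E_j$.
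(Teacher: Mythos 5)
Your plan follows the paper's own proof essentially step for step: pick $\tau(j)\in\argmin_{i\in E_j}\bbE[f(x_i)]$, apply \Cref{lm:li_basic_lemma_fh_unbounded} with $i_0=\tau(j)$ and $i_1=k_{j+2}$, drop the nonnegative $E_j$-contribution, lower-bound the retained $E_{j+1}$ piece by $\gamma_{j+1}|E_{j+1}|(\min\bar{f}_{j+1}-\min\bar{f}_j)$, bound the distance terms uniformly through \Cref{thm:avg_unbounded}\ref{eq:distance_bound}, and invoke \Cref{lm:epoch_length} to control the epoch-length ratio and obtain the factor $5$ (the paper phrases this as $(k_{j+2}-k_{j+1})/(k_{j+2}-k_j)\geq 1/5$). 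So, as a strategy, this is the same reduction.

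The difficulty you flag as ``bookkeeping'' is, however, a genuine obstruction in your set-up, and it is worth being precise about why. Your estimate of $A_k$ and $B_k$ is the correct one: each per-epoch sum $\sum_{i\in E_{j'}}\gamma_i/\lambda_i^{p-1}$ (resp.\ $\sum_{i\in E_{j'}}\gamma_i^2\lambda_i^{2-p}$) carries the multiplicity $|E_{j'}|\approx k/2^{j'+1}$, and it is this factor that cancels the $k$ in the denominator produced by $\max\{\Le,\lambda k^{1/p}\}^{p-1}\geq\lambda^{p-1}k^{(p-1)/p}$, leaving $A_k\lesssim\gamma v_p$ and $\sqrt{B_k}\lesssim\gamma\sqrt{w_p}$ after the geometric series in $j'$. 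The paper's displays \eqref{eq:li_fh_important_bounds1}--\eqref{eq:li_fh_important_bounds2} instead bound each $\sum_{i\in E_{j'}}(\cdot)$ by a single summand, omitting the $|E_{j'}|$ multiplicity, and thereby obtain $A_k\leq \gamma v_p/(2k)$ and $B_k\leq 2\gamma^2 w_p/k$; this extra $1/k$ is exactly what manufactures the factor $(v_p/\sqrt{k}+\sqrt{w_p})^2$ in the statement of \Cref{lm:epoch_link_unbounded}. With your (correct) estimates, the cross term coming from $v_p\cdot\sqrt{B_k}$ is $\gamma v_p\sqrt{w_p}$ with no $1/\sqrt{k}$, and the conclusion your argument can establish is $\norm{x_1-\xx_*}v_p+\gamma(v_p+\sqrt{w_p})^2$ inside the brackets; no rearrangement of terms recovers the $1/\sqrt{k}$ from $A_k\lesssim\gamma v_p$ and $\sqrt{B_k}\lesssim\gamma\sqrt{w_p}$. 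The practical upshot is benign: \Cref{thm:li_unbounded_fh} is itself stated with $(v_p+\sqrt{w_p})^2$, so the downstream theorem is unaffected by using this weaker (and, as far as your computation goes, the only provable) form of the lemma.
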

\begin{proof}
Let $j \leq n-1$. Suppose that
$\min\bar{f}_{j+1} > \min\bar{f}_j$, 
otherwise there is nothing to prove. Let
\begin{equation*}
\tau(j) \in \argmin_{i \in E_j} \bbE [f(x_i)]\quad\text{and}\quad
\tau(j+1) \in \argmin_{i \in E_{j+1}} \bbE [f(x_i)].
\end{equation*}
Then, equation \eqref{eq:li_basic_lemma_fh_unbounded} in \Cref{lm:li_basic_lemma_fh_unbounded},
with $i_0=\tau(j)$ and $i_1=k_{j+2}$, divided by $k_{j+2}-\tau(j)+1$,yields
\begin{align}
\label{eq:20250327e}
\nonumber\sum_{i=\tau(j)}^{k_{j+2}} \gamma_i &\frac{\bbE[f(x_i)-f(x_{\tau(j)})]}{k_{j+2}-\tau(j)+1}\\ 
\nonumber&\leq \frac{a_p \sum_{i=\tau(j)}^{k_{j+2}} (\gamma_i/\lambda_i^{p-1}) \bbE \norm{x_i-x_{\tau(j)}} + (b_p/2) \sum_{i=\tau(j)}^{k_{j+2}} \gamma_i^2 \lambda_i^{2-p}}{k_{j+2}-\tau(j)+1} \\
&\leq \frac{\gamma}{2^{pj} k} \Bigg( \frac{1}{k_{j+2}-\tau(j)+1} \frac{a_p}{\lambda^{p-1}} \sum_{i=\tau(j)}^{k_{j+2}} \bbE \norm{x_i-x_{\tau(j)}} + \frac{b_p}{2} \max\left\{\Le, \lambda \right\}^{2-p}  \gamma \Bigg),
\end{align}
where in the last inequality we used that, for every $i \in E_j \cup E_{j+1}$, it holds
\begin{align*}
\frac{\gamma_i}{\lambda_i^{p-1}} &\leq \frac{\gamma}{\lambda^{p-1}} \frac{1}{2^{jp}k}, \\
\gamma_i^2 \lambda_i^{2-p} &\leq \gamma^2 \max\{\lambda, \Le\}^{2-p} \frac{1}{2^{jp}k}.
\end{align*}
Noting that $\bbE[f(x_i)] \geq \bbE[f(x_{\tau(j)})]$ for every $i \in E_j$, recalling the definition of $\tau(j+1)$, and making use of \Cref{lm:epoch_length}, the left-hand side of \eqref{eq:20250327e}
can be bounded from below as follow
\begin{align*}
\sum_{i=\tau(j)}^{k_{j+2}} \gamma_i \frac{\bbE[f(x_i)-f(x_{\tau(j)})]}{k_{j+2}-\tau(j)+1}
&\geq \sum_{i=k_{j+1}+1}^{k_{j+2}} \gamma_i \frac{\bbE[f(x_i)-f(x_{\tau(j)})]}{k_{j+2}-\tau(j)+1} \\
&\geq \frac{\gamma}{2^{j+1} k^{1/p}} \bbE[f(x_{\tau(j+1)})-f(x_{\tau(j)})] \frac{k_{j+2}-k_{j+1}}{k_{j+2}-k_j} \\
&= \frac{\gamma}{2^{j+1} k^{1/p}} \bbE[f(x_{\tau(j+1)})-f(x_{\tau(j)})] \frac{1}{1 + \frac{k_{j+1}-k_j}{k_{j+2}-k_{j+1}}} \\
&\geq \frac{\gamma}{5\cdot 2^{j+1} k^{1/p}} \bbE[f(x_{\tau(j+1)})-f(x_{\tau(j)})].
\end{align*}
Thus, combining the above inequality with \eqref{eq:20250327e} and recalling the definitions of $v_p$ and $w_p$, we obtain
\begin{align}
\label{eq:link_aux_0}
\nonumber
\bbE&[f(x_{\tau(j+1)})-f(x_{\tau(j)})]\\
&\leq \frac{5}{2^{(p-1)j} k^{(p-1)/p}} \Bigg( \frac{1}{k_{j+2}-\tau(j)+1} \cdot \frac{v_p}{2} \underbrace{\sum_{i=\tau(j)}^{k_{j+2}} \bbE \norm{x_i-x_{\tau(j)}}}_{(\star)} + w_p  \gamma \Bigg).
\end{align}
Now, we bound the term $(\star)$ above. To that purpose
we observe that the quantities $A_k$ and $B_k$ defined in Theorem~\ref{thm:avg_unbounded},
with the parameters' definitions considered in \ref{FH2} can be bounded as follows
\begin{align}
\label{eq:li_fh_important_bounds1}
A_k=a_p\sum_{i=1}^k \frac{\gamma_i}{\lambda_i^{p-1}} &= a_p\sum_{j=0}^n \sum_{i \in E_j} \frac{\gamma_i}{\lambda_i^{p-1}} \leq a_p \frac{\gamma}{k \lambda^{p-1}} \sum_{j=0}^{n} \bigg(\frac{1}{2^p} \bigg)^j \leq \frac{2 \gamma a_p}{k \lambda^{p-1}} = \frac{\gamma v_p}{2k}
\end{align}
and
\begin{align}
\label{eq:li_fh_important_bounds2}
\nonumber B_k &= b_p\sum_{i=1}^k \gamma_i^2 \lambda_i^{2-p} = b_p\sum_{j=0}^n \sum_{i \in E_j} \gamma_i^2 \lambda_i^{2-p}\\
& \leq b_p \frac{\gamma^2 \max\{\lambda, \Le\}^{2-p}}{k} \sum_{j=0}^n \bigg(\frac{1}{2^p}\bigg)^j \leq b_p\frac{2 \gamma^2 \max\{\lambda, \Le\}^{2-p}}{k} = \frac{2\gamma^2 w_p}{k}.
\end{align}
Thus, the triangle inequality, and inequality \ref{eq:distance_bound} in Theorem~\ref{thm:avg_unbounded} combined with inequalities \eqref{eq:li_fh_important_bounds1}
and \eqref{eq:li_fh_important_bounds2}, yield
that for every $i \in \{\tau(j), \dots, k_{j+2}\}$,
\begin{align*}
\bbE \norm{x_i-x_{\tau(j)}} &\leq \bbE \norm{x_i-\xx_*} + \bbE \norm{x_{\tau(j)}-\xx_*}\\ 
&\leq 2 \norm{x_{1}-\xx_*} + 2\Bigg(\frac{v_p}{k} + \sqrt{\frac{2 w_p}{k}} \Bigg) \gamma.
\end{align*}
Plugging the above inequality into \eqref{eq:link_aux_0} and rearranging leads to
\begin{align*}
\bbE[f(x_{\tau(j+1)})-f(x_{\tau(j)})] &\leq \frac{5}{2^{(p-1)j} k^{(p-1)/p}} \Bigg[ \norm{x_1-\xx_*} v_p + \Bigg( \frac{v^2_p}{k} + v_p \sqrt{\frac{2 w_p}{k} } + w_p  \Bigg) \gamma \Bigg]
\end{align*}
and the statement follows.    
\end{proof}

We now ready to give the main result of this section.
\begin{theorem}
\label{thm:li_unbounded_fh}
Referring to Algorithm~\ref{algorithm:clippedSsGM}, let $k \in \N$
and suppose that $(\gamma_i)_{1 \leq i \leq k}$ and $(\lambda_i)_{1 \leq i \leq k}$ are chosen as in {\rm\ref{FH2}}. Let $\xx_*\in \argmin_{\XX}\hspace{-0.1ex} f$ and $f_* = \min_{\XX} f$. Then, for every $k \in \N$,
\begin{align}
\label{eq:li_unbounded_fh}
\bbE[f(x_k)] -f_* &
\leq \frac{1}{k^{(p-1)/p}} \Bigg[  \frac{6}{5} \frac{\norm{x_{1}-\xx_*}^2}{\gamma} + \frac{53}{10}\cdot\frac{\cdot 2^{p-1}}{2^{p-1}-1} \Big(\norm{x_1-\xx_*} v_p + \gamma 
\big( v_p+\sqrt{w_p} \big)^2 
\Big)\Bigg]
\end{align}
where $v_p$ and $w_p$ are defined as in Lemma~\ref{lm:epoch_link_unbounded}.
\end{theorem}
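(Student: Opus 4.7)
The strategy is a telescoping reduction to the average-iterate bound, exploiting the epoch partition introduced in~\ref{FH2}, in the spirit of Jain et al.~\cite{Jain2021}. The pivotal structural fact is that with $n = \lceil \log_2 k \rceil$ one has $\lceil k/2^n \rceil = 1$, hence $k_n = k-1$ and the final epoch $E_n = \{k\}$ is a singleton, so that $\min\bar{f}_n = \bbE[f(x_k)]$. First I would write the telescoping identity
\begin{equation*}
\bbE[f(x_k)] - f_* = (\min\bar{f}_0 - f_*) + \sum_{j=0}^{n-1} (\min\bar{f}_{j+1} - \min\bar{f}_j).
\end{equation*}
Non-positive increments in the sum only help and are discarded, while each positive increment is controlled by Lemma~\ref{lm:epoch_link_unbounded}. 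Since $v_p/\sqrt{k} \leq v_p$, the bracketed factor in that lemma is upper-bounded by $\norm{x_1-\xx_*} v_p + \gamma(v_p + \sqrt{w_p})^2$, and the geometric sum $\sum_{j \geq 0} 2^{-(p-1)j} \leq 2^{p-1}/(2^{p-1}-1)$ yields an overall epoch-jump contribution of order $\frac{2^{p-1}/(2^{p-1}-1)}{k^{(p-1)/p}}\big[\norm{x_1-\xx_*} v_p + \gamma(v_p + \sqrt{w_p})^2\big]$ with leading numerical constant $5$.

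Next I would handle $\min\bar{f}_0 - f_*$ by invoking Theorem~\ref{thm:avg_unbounded}\ref{eq:avg_unbounded} restricted to the first epoch $E_0 = \{1,\dots,k_1\}$ with $k_1 = \lceil k/2 \rceil$, using the trivial bound $\min\bar{f}_0 \leq \frac{1}{k_1}\sum_{i=1}^{k_1} \bbE[f(x_i)]$. Since parameters are constant on $E_0$, with $\gamma_i = \gamma/k^{1/p}$ and $\lambda_i = \max\{\Le, \lambda k^{1/p}\}$, direct summation (recalling $v_p = 4 a_p/\lambda^{p-1}$ and $w_p = b_p\max\{\Le,\lambda\}^{2-p}$) produces $A_{k_1} \leq \gamma v_p/4$ and $B_{k_1} \leq \gamma^2 w_p$, while $1/(k_1 \gamma_{k_1}) \leq 2/(\gamma k^{(p-1)/p})$ since $k_1 \geq k/2$. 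Theorem~\ref{thm:avg_unbounded}\ref{eq:avg_unbounded} then yields
\begin{equation*}
\min\bar{f}_0 - f_* \leq \frac{1}{k^{(p-1)/p}}\Big[\tfrac{10}{9}\tfrac{\norm{x_1-\xx_*}^2}{\gamma} + \gamma\big(\tfrac{10}{9} w_p + \tfrac{v_p^2}{4}\big)\Big].
\end{equation*}

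Combining the two contributions, I would weaken $10/9$ to $6/5$ on the $\norm{x_1 - \xx_*}^2/\gamma$ term, and absorb the residual $\gamma w_p$ and $\gamma v_p^2$ pieces from $E_0$ into the $\gamma(v_p + \sqrt{w_p})^2$ part of the epoch-jump sum (possible since $(v_p + \sqrt{w_p})^2 \geq v_p^2 + w_p$), at the price of slightly inflating its numerical prefactor from $5$ to $53/10$. The main technical obstacle is precisely this constant bookkeeping: one must verify that the estimates of $A_{k_1}$ and $B_{k_1}$ on $E_0$ match in form those already appearing inside the proof of Lemma~\ref{lm:epoch_link_unbounded}, so that the first-epoch residuals merge cleanly with the geometric epoch-jump sum under a single $1/k^{(p-1)/p}$ envelope with the explicit constants quoted in the theorem.
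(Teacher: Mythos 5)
Your proposal follows the paper's proof strategy exactly: telescope over epochs, bound the epoch jumps via Lemma~\ref{lm:epoch_link_unbounded} using the geometric sum $\sum_{j\geq 0}2^{-(p-1)j}\leq 2^{p-1}/(2^{p-1}-1)$, and bound the first-epoch contribution via Theorem~\ref{thm:avg_unbounded}\ref{eq:avg_unbounded} after noticing that the parameters are constant on $E_0$. The structural argument is sound and matches the paper.

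There is, however, a concrete error in your constant bookkeeping. By the definition in~\ref{FH2}, $k_j = k - \lceil k/2^j\rceil$, so $k_1 = k - \lceil k/2\rceil = \lfloor k/2\rfloor$, not $\lceil k/2\rceil$ as you wrote. Consequently $k_1 \leq k/2$, not $k_1\geq k/2$, and the only lower bound available is $k_1\geq k/3$ (valid for $k\geq 3$; the paper states $k/3\leq k_1\leq k/2$). This changes your estimate to $1/(k_1\gamma_{k_1})\leq 3/(\gamma k^{(p-1)/p})$ rather than $2/(\gamma k^{(p-1)/p})$, so the leading term from $E_0$ becomes $\tfrac{5}{3}\tfrac{\norm{x_1-\xx_*}^2}{\gamma}$ rather than $\tfrac{10}{9}\tfrac{\norm{x_1-\xx_*}^2}{\gamma}$. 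Since $\tfrac{5}{3}>\tfrac{6}{5}$, the weakening step you propose ("weaken $10/9$ to $6/5$") would no longer go through. Also note that with $k_1\leq k/2$ the sharper partial-sum bounds are $A_{k_1}\leq \gamma v_p/8$ and $B_{k_1}\leq \gamma^2 w_p/2$, slightly tighter than the $\gamma v_p/4$ and $\gamma^2 w_p$ you state; these cushion the $\gamma(\cdot)$ part but do not rescue the $\norm{x_1-\xx_*}^2/\gamma$ coefficient. (For what it is worth, the paper's own proof inserts a factor $3/\sqrt{2}\approx 2.12$ at this step, which I also could not reproduce from $k_1\geq k/3$ alone; so the precise constant $6/5$ appears delicate. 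But your derivation, as written, rests on the false inequality $k_1\geq k/2$ and would need repair.)
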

\begin{proof}
Relying on the notation of \Cref{lm:epoch_link_unbounded} and noting that $E_n = \{k\}$, we have
\begin{align*}
\bbE[f(x_k)] =
\min \bar{f}_{n} &= \min \bar{f}_{0}  + \sum_{j=0}^{n-1} \big(\min \bar{f}_{j+1}- \min \bar{f}_{j} \big)\\
&=\min_{i \in E_0} \bbE[f(x_i)]  + \sum_{j=0}^{n-1} \big(\min \bar{f}_{j+1}- \min \bar{f}_{j} \big).
\end{align*}
Using \Cref{lm:epoch_link_unbounded} and the fact that $\sum_{j=0}^{n-1} (1/2^{p-1})^j \leq 2^{p-1}/(2^{p-1}-1)$, one obtains
\begin{align*}
\bbE[f(x_k)] &\leq \min_{i \in E_0} \bbE[f(x_i)] + \frac{2^{p-1}}{2^{p-1}-1} \frac{5}{k^{(p-1)/p}} \Bigg[ \norm{x_1-\xx_*} v_p + 
\gamma \bigg( \frac{v_p}{\sqrt{k}}+\sqrt{w_p} \bigg)^2 
\Bigg].
\end{align*}
Noting that the minimum is always smaller than the average and subtracting $f_*$ from both sides, we have
\begin{align*}
\bbE[f(x_k)-f_*] &\leq \frac{1}{k_1} \sum_{i=1}^{k_1} \bbE[f(x_{i})-f_*] 
+ \frac{2^{p-1}}{2^{p-1}-1} \frac{5}{k^{(p-1)/p}} 
\Bigg[  \norm{x_1-\xx_*} v_p + 
\gamma \bigg( \frac{v_p}{\sqrt{k}}+\sqrt{w_p} \bigg)^2 
\Bigg].
\end{align*}
The first term in the right-hand side of the above inequality can be bounded using the bound in \ref{eq:avg_unbounded} of \Cref{thm:avg_unbounded}, so obtaining 
\begin{align*}
\bbE[f(x_k)-f_*] &\leq \frac{1}{k_1 \gamma_{k_1}} \bigg[ \frac{5}{9}\big(\norm{x_{1}-\xx_*}^2+ B_{k_1}\big) + 2 A^2_{k_1} \bigg] \\
&\qquad\qquad+ \frac{2^{p-1}}{2^{p-1}-1} \frac{5}{k^{(p-1)/p}} 
\Bigg[  \norm{x_1-\xx_*} v_p + 
\gamma \bigg( \frac{v_p}{\sqrt{k}}+\sqrt{w_p} \bigg)^2 
\Bigg],
\end{align*}
where we recall that $A_{k_1} = a_p \sum_{i=1}^{k_1} \gamma_i/\lambda_i^{p-1}$ and
$B_{k_1} = b_p \sum_{i=1}^{k_1} \gamma_i^2 \lambda_i^{2-p}$.
Now noting that $k/3 \leq k_1 \leq k/2$ and that, for every $i \in E_0$, $\gamma_i = \gamma/k^{1/p}$ and $\lambda_i = \max\{\lambda k^{1/p}, \Le \}$, we have
\begin{align*}
\bbE[f(x_k)-f_*] &\leq \frac{3}{\sqrt{2}}\cdot\frac{1}{k^{(p-1)/p}} \Bigg[ \frac{5}{9}\frac{\norm{x_{1}-\xx_*}^2}{\gamma} +  \frac \gamma 2\bigg( \frac{1}{16}v_p^2 + \frac{5}{9}w_p\bigg)  \Bigg] \\
&\qquad+ \frac{2^{p-1}}{2^{p-1}-1} \frac{5}{k^{(p-1)/p}} 
\Bigg[ \norm{x_1-\xx_*} v_p + 
\gamma \bigg( \frac{v_p}{\sqrt{k}}+\sqrt{w_p} \bigg)^2 
\Bigg].
\end{align*}
Rearranging the terms we have
\begin{align*}
\bbE[f(x_k)-f_*] &\leq \frac{1}{k^{(p-1)/p}} \Bigg[ \frac{5 \cdot 2^{p-1}}{2^{p-1}-1} \norm{x_1-\xx_*} v_p + \frac{5}{3\sqrt{2}}\cdot \frac{\norm{x_{1}-\xx_*}^2}{\gamma} \\
&\qquad\qquad\qquad+ \gamma\Bigg(\frac{5}{6\sqrt{2}}\big( v_p^2 + w_p\big) +\frac{5\cdot 2^{p-1}}{2^{p-1}-1} \bigg( \frac{v_p}{\sqrt{k}}+\sqrt{w_p} \bigg)^2  \Bigg) \Bigg].
\end{align*}
The statement follows by noting that
\begin{equation*}
\frac{5}{6\sqrt{2}}\big( v_p^2 + w_p\big)\leq
\frac{5}{12\sqrt{2}}\cdot 2 \big( v_p + \sqrt{w_p}\big)^2
\leq \frac{5}{12\sqrt{2}}\frac{2^{p-1}}{2^{p-1}-1} 
\big( v_p + \sqrt{w_p}\big)^2 
\end{equation*}
\end{proof}
\begin{remark}
The following comments are in order.
\begin{itemize}
\item The rate in \eqref{eq:li_unbounded_fh} features the same order of $1/k^{(p-1)/p}$ for the average iterate in the finite horizon case and it is therefore optimal. 

\item Examining the bound in \Cref{thm:li_unbounded_fh}, we see that the convergence rate for the last iterate exhibits substantially worse constants than the analogous rate for the average iterate in \eqref{eq:avg_unbounded_fh}. In particular, the term $2^{p-1}/(2^{p-1}-1)$ grows rapidly and without bound as $p$ approaches 1. It remains unclear whether this constant can be improved.
\end{itemize}
\end{remark}

\subsection{Anytime setting}
Similarly to the finite horizon case, we reduced the analysis of the error of the $k$-th iterate to that of a weighted (by the step size) average. In this setting the stepsizes and clipping levels are set as is {\rm \ref{AT}}. 
Thus, we have
\begin{equation}
\label{eq:boundAT}
\begin{aligned}
\frac{\gamma_i}{\lambda_i^{p-1}} 
&\leq \frac{\gamma}{\lambda^{p-1}} \frac{1}{i(1+\log i)}\\
\gamma_i^2 \lambda_i^{2-p} 
&\leq  \gamma^2 b_p \max\{ \Le, \lambda\}^{2-p} \frac{1}{i(1+\log i)}
\end{aligned}
\end{equation}
and hence, by elementary computation (see Lemma~\ref{lm:technical_lemma}),
\begin{equation}
\label{eq:20250329a}
\begin{aligned}
A_k &\leq \frac{\gamma a_p}{\lambda^{p-1}} \sum_{i=1}^k \frac{1}{i(1+\log i)} \leq 
\frac{\gamma a_p}{\lambda^{p-1}} (1+\log(1+\log k))\\
B_k & \leq \gamma^2 b_p \max\{\Le, \lambda\}^{2-p}\sum_{i=1}^k \frac{1}{i(1+\log i)} 
\leq \gamma^2 b_p \max\{\Le, \lambda\}^{2-p} (1+\log(1+\log k)).
\end{aligned}
\end{equation}

%
The following results appeared in \cite{Lin2018} (for completeness, we give a proof in Appendix~\ref{app:B}) and enables the reduction to the weighted average of the errors.
\begin{restatable}{lemma}{keyinequalityat}
\label{lm:key_inquelity_at}
Let $(\beta_i)_{1 \leq i \leq k} \in \R^k$. Then the following hold.
\begin{enumerate}[label={\rm(\roman*)}]
\item\label{lm:key_inquelity_at_i} $\displaystyle \beta_k = \frac{1}{k} \sum_{i=1}^k \beta_i + \sum_{j=1}^{k-1} \frac{1}{j(j+1)}\sum_{i=k-j+1}^k (\beta_i - \beta_{k-j})$.
\item\label{lm:key_inquelity_at_iii} If $(i\beta_i)_{1 \leq i\leq k}$ is decreasing, then 
$\displaystyle \sum_{j=1}^{k-1} \frac{1}{j(j+1)}\sum_{i=k-j}^k \beta_i \leq \frac{2}{k}\sum_{i=1}^k \beta_i + \beta_k\bigg(1-\frac{3}{k}\bigg)$.
\end{enumerate}
\end{restatable}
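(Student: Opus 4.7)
Part~\ref{lm:key_inquelity_at_i} is a purely algebraic identity and I would verify it by a direct computation. Splitting the inner difference in the double sum on the right-hand side gives
\[
\sum_{j=1}^{k-1}\frac{1}{j(j+1)}\sum_{i=k-j+1}^k(\beta_i - \beta_{k-j}) = \sum_{j=1}^{k-1}\frac{1}{j(j+1)}\sum_{i=k-j+1}^k \beta_i \;-\; \sum_{j=1}^{k-1}\frac{\beta_{k-j}}{j+1}.
\]
In the first piece I would swap the order of summation and apply $\frac{1}{j(j+1)} = \frac{1}{j}-\frac{1}{j+1}$ to collect the coefficient $\frac{1}{k-i+1}-\frac{1}{k}$ of each $\beta_i$ with $i\geq 2$. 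Re-indexing the second piece via $m=k-j$ and combining everything with the leading $\frac{1}{k}\sum_{i=1}^k \beta_i$, the terms cancel in telescoping pairs and only $\beta_k$ survives.

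For part~\ref{lm:key_inquelity_at_iii}, my plan is to first compute the coefficient of each $\beta_i$ on the left-hand side: for $i<k$ this equals $\sum_{j=k-i}^{k-1}\frac{1}{j(j+1)} = \frac{1}{k-i}-\frac{1}{k}$, and for $i=k$ it equals $1-\frac{1}{k}$. A short rearrangement shows the desired bound is equivalent to
\[
\sum_{i=1}^{k-1}\frac{\beta_i}{k-i} \;\leq\; \frac{3}{k}\sum_{i=1}^{k-1}\beta_i.
\]
To exploit the monotonicity hypothesis, I would set $T_i := i\beta_i$ (decreasing) and use the partial fraction $\frac{1}{i(k-i)} = \frac{1}{k}\bigl(\frac{1}{i}+\frac{1}{k-i}\bigr)$. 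This rewrites the previous display as an equivalent reduction to
\[
\sum_{i=1}^{k-1}\frac{T_i}{k-i} \;\leq\; 2\sum_{i=1}^{k-1}\frac{T_i}{i}.
\]
I would then establish this last inequality by splitting the sum at $\lfloor k/2\rfloor$: for $i\leq k/2$, the elementary bound $k-i\geq i$ gives $T_i/(k-i)\leq T_i/i$; for $i>k/2$, the substitution $j=k-i$ transforms the tail into $\sum_{j\leq k/2}T_{k-j}/j$, and since $k-j\geq j$, the decreasing property yields $T_{k-j}\leq T_j$. Each half is thus bounded by $\sum_i T_i/i$, producing the factor of $2$.

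The main obstacle will be part~\ref{lm:key_inquelity_at_iii}. Whereas part~\ref{lm:key_inquelity_at_i} is mechanical, the second part requires recognizing (a) that $T_i=i\beta_i$ is the right variable for invoking the decreasing hypothesis, and (b) that a partial-fraction symmetrization of $1/(i(k-i))$, combined with the reflection $i\mapsto k-i$, is what makes both halves of the split bounded by the same benchmark $\sum T_i/i$. The slack between the constant $3/k$ stated in the claim and the $2/k$ that comes out of the purely algebraic cancellations is precisely the buffer that absorbs the factor-of-two loss of this symmetrization step.
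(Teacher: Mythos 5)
Your argument is correct, and the two parts sit differently relative to the paper's own proof.

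For part~\ref{lm:key_inquelity_at_iii} you take essentially the same route as the paper: reduce the claim (by computing the coefficient of each $\beta_i$, i.e.\ $\sum_{j=k-i}^{k-1}\frac{1}{j(j+1)} = \frac{1}{k-i}-\frac{1}{k}$ for $i<k$) to $\sum_{i<k}\frac{\beta_i}{k-i} \le \frac{3}{k}\sum_{i<k}\beta_i$, introduce $\eta_i = i\beta_i$, apply the partial-fraction identity $\frac{1}{i(k-i)} = \frac{1}{k}\bigl(\frac{1}{i}+\frac{1}{k-i}\bigr)$, and handle the $\sum\eta_i/(k-i)$ piece by splitting at $k/2$ and reflecting $i\mapsto k-i$ to invoke monotonicity. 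This is exactly the paper's argument (it splits at $\lceil k/2\rceil$, you at $\lfloor k/2\rfloor$, but that is cosmetic). Note a shared subtlety: the final step ``each half $\le \sum_{i=1}^{k-1}\eta_i/i$'' pads the partial sum $\sum_{i\le k/2}\eta_i/i$ up to the full range, which tacitly uses $\eta_i \ge 0$ for $i>k/2$ (equivalently $\beta_i\ge 0$); the paper does the same, and both are fine since the lemma is applied with $\beta_i = \gamma_i\bbE[f(x_i)-f_*]\ge 0$, but neither proof says so explicitly.

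For part~\ref{lm:key_inquelity_at_i} you take a genuinely different route. The paper introduces the running tail-averages $m_j = \frac{1}{j}\sum_{i=k-j+1}^k\beta_i$, observes that $m_j - m_{j+1} = \frac{1}{j(j+1)}\sum_{i=k-j+1}^k(\beta_i-\beta_{k-j})$, and telescopes $\sum_{j=1}^{k-1}(m_j-m_{j+1}) = m_1 - m_k$. You instead expand the right-hand side directly, swap the order of summation, apply the partial fraction $\frac{1}{j(j+1)}=\frac{1}{j}-\frac{1}{j+1}$ to get the coefficient $\frac{1}{k-i+1}-\frac{1}{k}$, re-index the $\beta_{k-j}$ piece, and check that all coefficients cancel except that of $\beta_k$. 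Both are valid; the paper's choice of auxiliary sequence $m_j$ is shorter and arguably more illuminating (it exposes $\beta_k$ as $m_1$ and the average as $m_k$), while your coefficient-counting is more mechanical but also more elementary and self-contained, and it doubles as a useful sanity check on the identity's index ranges.
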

%
%
\begin{theorem}
\label{thm:li_unbounded_at}
Referring to Algorithm~\ref{algorithm:clippedSsGM},
 suppose that $(\gamma_k)_{k \in \N}$ and $(\lambda_k)_{k \in \N}$ are determined as in {\rm \ref{AT}}.
 Let $\xx_*\in \argmin_{\XX}\hspace{-0.1ex} f$ and $f_* = \min_{\XX} f$. Then, for every $k \in \N$,
\begin{align}
\label{eq:li_unbounded_at}
\nonumber\bbE[f(x_k)] -f_* &\leq \frac{(1+\log k)^{1/p}}{k^{(p-1)/p}}
\Bigg[\frac 5 9 \cdot \frac{\norm{x_{1}-\xx_*}^2}{\gamma} 
+ \frac{6}{5}
v_p \norm{x_1- \xx_*}\\
&\qquad\qquad\qquad\qquad\qquad 
+ \frac{3}{4}
\gamma \big(1+\log (1+\log k)\big)^2\big( v_p + \sqrt{w_p} \big)^{\!2}\, \Bigg].
\end{align}
\end{theorem}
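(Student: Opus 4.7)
The plan is to reduce the last-iterate error to the average-iterate analysis via the Lin--Rosasco-type identity of Lemma~\ref{lm:key_inquelity_at}\ref{lm:key_inquelity_at_i}. Applied to $\beta_i = \gamma_i s_i$, where $s_i := \bbE[f(x_i)]-f_*\ge 0$, it gives
\begin{equation*}
\gamma_k s_k \;=\; \frac{1}{k}\sum_{i=1}^k \gamma_i s_i \;+\; \sum_{j=1}^{k-1}\frac{1}{j(j+1)}\sum_{i=k-j+1}^k(\gamma_i s_i - \gamma_{k-j} s_{k-j}).
\end{equation*}
The first sum on the right is already controlled by \eqref{eq:thm_avg_unbounded_p1} (from the proof of Theorem~\ref{thm:avg_unbounded}) by $\tfrac{5}{9}(\|x_1-\xx_*\|^2 + B_k) + 2A_k^2$, and after dividing by $\gamma_k$ and using $1/(k\gamma_k)=(1+\log k)^{1/p}/(k^{(p-1)/p}\gamma)$ with the AT-specific bounds of~\eqref{eq:20250329a}, this term contributes exactly the average-iterate rate.

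For the correction, the identity $\gamma_i s_i - \gamma_{k-j} s_{k-j} = \gamma_i(s_i - s_{k-j}) + (\gamma_i - \gamma_{k-j})s_{k-j}$ combined with $s_{k-j}\ge 0$ and the decreasing stepsizes yields $\sum_{i=k-j+1}^k(\gamma_i s_i - \gamma_{k-j}s_{k-j}) \le \sum_{i=k-j}^k \gamma_i(s_i - s_{k-j})$; this is the quantity bounded by Lemma~\ref{lm:li_basic_lemma_fh_unbounded} (with $i_0=k-j$, $i_1=k$) in terms of $a_p\sum(\gamma_i/\lambda_i^{p-1})\bbE\|x_i-x_{k-j}\| + (b_p/2)\sum\gamma_i^2\lambda_i^{2-p}$. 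The a posteriori distance bound of Theorem~\ref{thm:avg_unbounded}\ref{eq:distance_bound} together with the triangle inequality then gives $\bbE\|x_i-x_{k-j}\|\le 2(\|x_1-\xx_*\|+2A_k+\sqrt{B_k})$.

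To collapse the outer $\sum_j 1/(j(j+1))$, I plan to apply Lemma~\ref{lm:key_inquelity_at}\ref{lm:key_inquelity_at_iii} with the upper envelopes $\gamma/(\lambda^{p-1}i(1+\log i))$ and $\gamma^2\max\{\Le,\lambda\}^{2-p}/(i(1+\log i))$ from~\eqref{eq:boundAT}, for which $i\beta_i = \mathrm{const}/(1+\log i)$ is decreasing (note that on the regime where $\lambda_i = \Le$ the pointwise bound already provides a monotone envelope, making the direct application legitimate). The lemma produces at most $(2/k)(A_k/a_p) + \gamma_k/\lambda_k^{p-1}$ and $(2/k)(B_k/b_p) + \gamma_k^2\lambda_k^{2-p}$ respectively. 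Assembling these, dividing by $\gamma_k$, and substituting the AT closed forms, one obtains an inequality involving a $\|x_1-\xx_*\|^2/\gamma$ group, a cross $\|x_1-\xx_*\|v_p$ contribution appearing both with and without a $(1+\log(1+\log k))$ factor, and a pure-correction group of the form $\gamma(v_p+\sqrt{w_p})^2(1+\log(1+\log k))^2$, all multiplied by $(1+\log k)^{1/p}/k^{(p-1)/p}$. The main obstacle will be the constant bookkeeping: the $(1+\log(1+\log k))$-weighted cross piece must be redistributed via Young's inequality $ab\le \tfrac12(a^2/\gamma + \gamma b^2)$ between the quadratic and pure-correction groups, after which the clean coefficients $5/9$, $6/5$ and $3/4$ of~\eqref{eq:li_unbounded_at} fall out from tracking the numerical prefactors produced by Lemmas~\ref{lm:li_basic_lemma_fh_unbounded} and~\ref{lm:key_inquelity_at}.
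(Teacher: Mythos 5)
Your proposal follows essentially the same route as the paper's proof: apply Lemma~\ref{lm:key_inquelity_at}\ref{lm:key_inquelity_at_i} with $\beta_i=\gamma_i\bbE[f(x_i)-f_*]$, use the monotonicity of $(\gamma_i)$ to reduce the correction to $\sum_{i=k-j}^k\gamma_i\bbE[f(x_i)-f(x_{k-j})]$, bound it via Lemma~\ref{lm:li_basic_lemma_fh_unbounded} together with the a posteriori distance bound of Theorem~\ref{thm:avg_unbounded}\ref{eq:distance_bound}, and collapse the outer $\sum_j 1/(j(j+1))$ by Lemma~\ref{lm:key_inquelity_at}\ref{lm:key_inquelity_at_iii} applied to the envelope $1/\eta_i$ (for which $i/\eta_i$ is decreasing). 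The only genuine divergence is the final arithmetic: the paper keeps the cross term $\gamma v_p\|x_1-\xx_*\|$ as is rather than redistributing it with Young's inequality as you suggest, so if you carry out your plan you will land on a bound with the same rate but constants organized a bit differently from the stated $5/9$, $6/5$, $3/4$—which is fine, but worth being aware of when you fill in the bookkeeping.
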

\begin{proof}
We start by applying \Cref{lm:key_inquelity_at}\ref{lm:key_inquelity_at_i} with 
$\beta_i = \gamma_i \bbE[f(x_i)-f_*]$,
obtaining 
\begin{align*}
\gamma_k \bbE[f(x_k)-f_*] &
\leq \frac{1}{k} \sum_{i=1}^{k} \gamma_i \bbE[f(x_i)-f_*] \\
&\quad\qquad+ \sum_{j=1}^{k-1}\frac{1}{j(j+1)} \sum_{i=k-j+1}^k \big(\gamma_i \bbE[f(x_i)-f_*] - \gamma_{k-j}\bbE[f(x_{k-j})-f_*]\big) \\
&\leq \underbrace{\frac{1}{k} \sum_{i=1}^{k} \gamma_i \bbE[f(x_i)-f_*]}_{\textbf{A}} + \underbrace{\sum_{j=1}^{k-1}\frac{1}{j(j+1)} \sum_{i=k-j}^k \gamma_i (\bbE[f(x_i)-f(x_{k-j})])}_{\textbf{B}},
\end{align*}
where in the last inequality we used the fact that $(-\gamma_i)_{1\leq i\leq k}$ is increasing.
In the following we set for the sake of brevity 
\begin{equation*}
\eta_i = i(1+\log i)\quad\text{and}\quad \alpha_k = \sum_{i=1}^k \frac{1}{\eta_i} = 1+ \log(1+\log k).
\end{equation*}
Then, it is immediate from \eqref{eq:thm_avg_unbounded_p1} in the proof of \Cref{thm:avg_unbounded} that 
\begin{align*}
\textbf{A} &\leq \frac{1}{k} \Big(\frac{5}{9}\big(\norm{x_{1}-\xx_*}^2+B_k\big) + 2 A_k^2  \Big),
\end{align*}
and hence, recalling \eqref{eq:20250329a} and the definition of $v_p$ and $w_p$ in Lemma~\ref{lm:epoch_link_unbounded},
\begin{align}
\label{eq:A_bound}
\textbf{A} \leq \frac{1}{k} \bigg[\frac 5 9 \norm{x_{1}-\xx_*}^2 +  
\gamma^2\bigg(\frac{v^2_p}{8} \alpha_k +  \frac 5 9 w_p  \bigg)\alpha_k
\bigg].
\end{align}

Concerning the term \textbf{B}, we first note that by Lemma~\ref{lm:li_basic_lemma_fh_unbounded}
with $i_0 = k-j$ and $i_1 = k$ and the bounds in \eqref{eq:boundAT}, we have
%
\begin{align}
\label{eq:20250403d}
\nonumber
\sum_{i=k-j}^k \gamma_i  \bbE[f(x_i) - f(x_{k-j})] 
&\leq a_p \sum_{i=k-j}^k \frac{\gamma_i}{\lambda_i^{p-1}} \bbE \norm{x_i-x_{k-j}} + \frac{b_p}{2} \sum_{i=k-j}^k \gamma_i^2 \lambda_i^{2-p}\\
&\leq 
\frac{\gamma v_p}{4}
\sum_{i=k-j}^k \frac{\bbE \norm{x_i-x_{k-j}}}{\eta_i}+ 
\frac{\gamma^2 w_p}{2}
\sum_{i=k-j}^k \frac{1}{\eta_i}.
\end{align}
Now notice that, the triangle inequality combined with Theorem~\ref{thm:avg_unbounded}\ref{eq:distance_bound}
and \eqref{eq:20250329a}
 yields
\begin{align*}
\bbE \norm{x_i - x_{k-j}} &\leq \bbE\norm{x_i - \xx_*} + \bbE\norm{x_{k-j}-\xx_*}\\
&\leq 2 \Big( \norm{x_1-\xx_*} +  2 A_k + \sqrt{B_k } \Big)\\
&\leq 2\norm{x_1-\xx_*} +  2\gamma \Big( \frac{v_p}{2} \alpha_k + \sqrt{w_p \alpha_k } \Big),
\end{align*}
and hence, since $(1/\eta_i)_{1\leq i \leq k}$ is decreasing, thanks to Lemma~\ref{lm:key_inquelity_at}\ref{lm:key_inquelity_at_iii}, we have
\begin{align}
\label{eq:B_bound}
\nonumber
\textbf{B} &\leq \Bigg( \frac{1}{2} \gamma v_p  \norm{x_1-\xx_*} + \gamma^2 \frac{v_p}{2}\Big( \frac{v_p}{2} \alpha_k + \sqrt{w_p \alpha_k } \Big) + \frac{w_p \gamma^2}{2}  \Bigg)
\sum_{j=1}^{k-1}\frac{1}{j(j+1)} \sum_{i=k-j}^k \frac{1}{\eta_i}\\
\nonumber& \leq \Bigg( \frac{1}{2}\gamma v_p \norm{x_1-\xx_*} + \frac{\gamma^2}{2}\bigg(\frac 12 v^2_p\alpha_k + v_p\sqrt{w_p\alpha_k}\bigg)   \Bigg) \bigg( \frac{2}{k}\sum_{i=1}^k \frac{1}{\eta_i} + \frac{1}{\eta_k}\bigg(1-\frac{3}{k}\bigg)\bigg)\\
&\leq \frac{21}{20}
\bigg(  \gamma v_p\norm{x_1-\xx_*} +  \gamma^2 \bigg(\frac 12 v^2_p\alpha_k + v_p\sqrt{w_p\alpha_k}\bigg) \bigg)  \frac{\alpha_k}{k},
\end{align}
where in the last inequality we used the fact that
$(k-3)/\eta_k \leq 10^{-1}\cdot\alpha_k$.
Combining \eqref{eq:A_bound} and \eqref{eq:B_bound}, we have
\begin{align*}
\gamma_k  \bbE[f(x_k)-f_*] &\leq 
\frac{1}{k} \bigg[\frac 5 9 \norm{x_{1}-\xx_*}^2 
+ \frac{21}{20}
\gamma v_p\norm{x_1- \xx_*}\\
&\qquad\qquad+ \gamma^2 \alpha_k\bigg( \frac{1}{8}v^2_p\alpha_k + \frac 5 9 w_p 
+ \frac{21}{40} 
v_p^2 \alpha_k 
+ \frac{21}{20} 
v_p\sqrt{\alpha_k w_p}\bigg)
\bigg]\\
&\leq 
\frac{1}{k} \bigg[\frac 5 9 \norm{x_{1}-\xx_*}^2 
+ \frac{21}{20} 
\gamma v_p\norm{x_1- \xx_*}
+ \frac{13}{20}
\gamma^2 \alpha_k\big( v_p \sqrt{\alpha_k} + w_p \big)^2
\bigg].
\end{align*}
The statement follows by dividing both sides by $\gamma_k$, collecting $\gamma$, 
and majorizing the fractions.
\end{proof}

\begin{remark} 
The rate in \eqref{eq:li_unbounded_at} is (ignoring polylogarithmic factors in $\log k$) of order  $(\log^{1/p} k)/k^{(p-1)/p}$, and we do not know if the $\log^{1/p} k$ factor can be improved. For comparison, we notice that in the special case $p=2$, our rate is of order  $\sqrt{(\log k)/k}$ and improves upon the result in \cite{Harvey2019a} by shaving a $\sqrt{\log k}$ factor. 
\end{remark}

\section{Application to Supervised Learning with Kernels}
\label{sec:SL}
In this section, we consider the \emph{supervised learning} problem defined below.
\begin{align}
\label{eq:statistical_learning}
\minimize{\xx \in \HH}{R(\xx)= \bbE[\ell(\scalarp{\xx,\phi(Z)},Y)]},
\end{align}
under the following assumptions:
\begin{enumerate}[label=A\arabic*{\rm}]
\item\label{A1} $Z$ and $Y$ are random variables, taking values in the abstract measure spaces $\cZ$ and $\cY$ respectively, having joint \emph{distribution} $\mu$.

\item\label{A2} $\HH$ is a real separable Hilbert space with inner product $\scalarp{\cdot,\cdot}$ and $\phi\colon \cZ \rightarrow \cX$ is a \emph{feature map} with a corresponding \emph{kernel} $K(\cdot, \cdot) = \scalarp{\phi(\cdot), \phi(\cdot)}$.

\item\label{A3} For $p \in \left]1,2\right]$, there exists $\nu > 0$ such that $\bbE [K(Z,Z)^{p/2}] = \bbE [\norm{\phi(Z)}^p] \leq \nu^p$.

\item\label{A4} $\ell\colon \bbR\times  \cY  \rightarrow \bbR_+$ is a \emph{loss function} convex and Lipschitz continuous in the first argument, with Lipschitz constant $\cL > 0$ and satisfying $\bbE[\ell(0,Y)] < \infty$.

\item\label{A5} $\argmin_{\xx \in \HH} R(\xx)\neq \varnothing$.
\end{enumerate}
In words, the goal is to find a \emph{prediction function} $f_\xx\colon \cZ\to \R\colon z \mapsto \scalarp{\xx, \phi(z)}$, parametrized by $\xx \in \HH$, with the smallest expected loss, by accessing only to a finite sample of i.i.d. copies of $(Z,Y)$.
%

Problem \eqref{eq:statistical_learning} is a special case of the problem consider in \eqref{eq:mainprob}. Indeed, it is an easy consequence of the above assumptions
that the hypothesis 
\ref{H1}--\ref{H4} are satisfied with the following stochastic subgradient 
\begin{align}
\label{eq:sl_oracle}
\hat{\ug}(\xx, (Z, Y)) = \ell^{\prime}(\scalarp{\xx, \phi(Z)}, Y) \phi(Z),
\end{align}
and
\begin{equation*}
L = \cL \cdot \bbE [\norm{\phi(Z)}]\quad\text{and}\quad \sigma^p = (2\nu \cL)^p.
\end{equation*}
Details are given in Appendix~\ref{sec:appC}.
%
%
Therefore, we can we derive from \Cref{crl:avg_unbounded_hp}\ref{crl:avg_unbounded_hp_fh}
the following guarantees.
\begin{corollary}
\label{crl:sl_avg_hp}
Consider Algorithm~\ref{algorithm:clippedSsGM} applied to the oracle \eqref{eq:sl_oracle} (see Algorithm~\ref{algo:kernelSsGM}). For $\delta \in (0, 2/e]$, let
\begin{align}
\label{eq:sl_parameter_setting}
(\gamma_i)_{1 \leq i \leq k} \equiv \frac{(\log(2/\delta))^{-1/2}}{k^{1/p}}, (\lambda_i)_{1 \leq i \leq k} \equiv \max\{\Le, k^{1/p}\}.
\end{align}
Then, with probability at least $1-\delta$, it holds
\begin{align}
\label{eq:sl_avg_hp}
R(\bar{x}_k) - R^* \lesssim \frac{1}{k^{(p-1)/p}} \Big(1 + \log\big(1/\delta\big)\Big).
\end{align}
\end{corollary}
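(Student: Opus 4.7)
The plan is to verify that the supervised learning problem~\eqref{eq:statistical_learning} is a particular instance of problem~\eqref{eq:mainprob} satisfying assumptions~\ref{H1}--\ref{H4} with the constants advertised just below~\eqref{eq:sl_oracle}, and then to apply Corollary~\ref{crl:avg_unbounded_hp}\ref{crl:avg_unbounded_hp_fh} directly. Concretely, I would take $\XX = \HH$ (so that \ref{H2} is trivial with $P_\XX = \Id$), $f = R$, and let the stochastic subgradient be~\eqref{eq:sl_oracle}; assumption~\ref{H2b} then coincides with~\ref{A5}.

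For~\ref{H1}, convexity of $R$ is inherited from the convexity of $\ell(\cdot,Y)$ in~\ref{A4} through composition with the linear map $\xx \mapsto \scalarp{\xx,\phi(Z)}$ and expectation. For Lipschitz continuity, combining the $\cL$-Lipschitzness of $\ell(\cdot,Y)$ with Jensen's inequality and~\ref{A3} gives
\begin{equation*}
|R(\xx) - R(\yy)|\; \leq\; \cL\,\bbE[\norm{\phi(Z)}]\,\norm{\xx - \yy} \;\leq\; \cL\,(\bbE\norm{\phi(Z)}^p)^{1/p}\,\norm{\xx - \yy} \;\leq\; \cL\nu\,\norm{\xx - \yy},
\end{equation*}
so $L = \cL\,\bbE[\norm{\phi(Z)}]$ is admissible. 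For~\ref{H3}, $\norm{\hat{\ug}(\xx,(Z,Y))} \leq \cL\,\norm{\phi(Z)}$ is integrable by~\ref{A3}, and the unbiasedness $\bbE[\hat{\ug}(\xx,\xi)] \in \partial R(\xx)$ follows from a standard interchange of expectation and subdifferential, valid under the pointwise Lipschitz continuity of $\ell(\cdot,Y)$ together with the integrability bound just obtained. Finally, \ref{H4} follows from $\norm{\hat{\ug}}^p \leq \cL^p\norm{\phi(Z)}^p$, which yields $\bbE\norm{\hat{\ug}}^p \leq (\cL\nu)^p$, and then $\bbE\norm{\hat{\ug} - \bbE\hat{\ug}}^p \leq 2^p\,\bbE\norm{\hat{\ug}}^p \leq (2\cL\nu)^p$, giving the advertised $\sigma = 2\cL\nu$.

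With these verifications in place, the parameter schedule~\eqref{eq:sl_parameter_setting} coincides with rule~\ref{FH} applied with the constant $\lambda = 1$ and a $\gamma$ that is tuned as a function of $\delta$. The claim then follows from Corollary~\ref{crl:avg_unbounded_hp}\ref{crl:avg_unbounded_hp_fh}, by tracing the $\gamma$-dependence in the proof of Theorem~\ref{thm:avg_unbounded_hp} and (re-)balancing the $\norm{x_1 - \xx_*}^2/\gamma$ and $\gamma\log(2/\delta)$ contributions inside the squared bracket in~\eqref{eq:avg_unbounded_hp}; up to absolute constants this yields a bound of order $(1 + \log(1/\delta))/k^{(p-1)/p}$, as stated. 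The principal subtlety in the program is the interchange of expectation and subdifferential used to establish~\ref{H3}, which requires producing a measurable selection of a subgradient of $\ell(\cdot,Y)$; once this is in place, the rest of the argument is routine substitution of constants.
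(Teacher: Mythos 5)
Your verification of~\ref{H1}--\ref{H4} for the supervised learning instance reproduces the paper's Appendix~\ref{sec:appC} argument: convexity and Lipschitz continuity of $R$ with $L = \cL\,\bbE\|\phi(Z)\|$, unbiasedness via the pointwise subgradient inequality followed by taking expectations, and $\sigma^p = (2\cL\nu)^p$ from $\bbE\|\hat{\ug} - \bbE\hat{\ug}\|^p \leq 2^{p-1}\big(\bbE\|\hat{\ug}\|^p + \|\bbE\hat{\ug}\|^p\big) \leq 2^p(\cL\nu)^p$. The measurability point you flag is genuine but routine for a scalar convex loss (take, e.g., the left derivative).

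The gap is in the last step. Tracing $\gamma$ through Theorem~\ref{thm:avg_unbounded_hp} with the FH schedule gives $A_k, C_k \lesssim \gamma$, $B_k, D_k \lesssim \gamma^2$, $E_k\lesssim\gamma^4$, so the bracketed expression is of order $\|x_1-\xx_*\| + \gamma\log(2/\delta)$. After squaring and dividing by $2k\gamma_k = 2\gamma k^{(p-1)/p}$, the two competing $\gamma$-dependent pieces are $\|x_1-\xx_*\|^2/\gamma$ and $\gamma\log^2(2/\delta)$ --- note the \emph{square} on the logarithm, which your "re-balancing" of $\|x_1-\xx_*\|^2/\gamma$ against $\gamma\log(2/\delta)$ drops. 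Balancing $1/\gamma$ against $\gamma\log^2(2/\delta)$ gives $\gamma = 1/\log(2/\delta)$, i.e., exactly the $\gamma$ stipulated in \Cref{crl:avg_unbounded_hp}\ref{crl:avg_unbounded_hp_fh}, and then the bound has the claimed order $\log(2/\delta)/k^{(p-1)/p}$. With the $\gamma = (\log(2/\delta))^{-1/2}$ actually written in~\eqref{eq:sl_parameter_setting}, the dominant term is $\gamma\log^2(2/\delta) = (\log(2/\delta))^{3/2}$, so the derivation yields a bound of order $(\log(1/\delta))^{3/2}/k^{(p-1)/p}$, a $\sqrt{\log(1/\delta)}$ factor worse than~\eqref{eq:sl_avg_hp}. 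The mismatch between the $\gamma$ in~\eqref{eq:sl_parameter_setting} and the one in \Cref{crl:avg_unbounded_hp}\ref{crl:avg_unbounded_hp_fh} is most likely a typo in the paper; the stated rate requires $\gamma_i \equiv (\log(2/\delta))^{-1}/k^{1/p}$, and your re-balancing as written does not rescue the $(\log(2/\delta))^{-1/2}$ choice.
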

\begin{remark}
The following comments are in order.
\begin{enumerate}[label={\rm(\roman*)}]
\item Algorithm~\ref{algorithm:clippedSsGM} can be easily implemented to solve the problem in \eqref{eq:statistical_learning}, even when $\HH$ is infinite dimensional, via the kernel function $K$.
 This has been first noticed in \cite{Parletta2024} that provided high probability bounds for a constrained version of the problem in \eqref{eq:statistical_learning} under the restrictive assumption that the constrain set is bounded. For reader's convenience, we provide full implementation details in Appendix~\ref{sec:appC}.
\item The parameter setting in \eqref{eq:sl_parameter_setting} is a special case of \ref{FH}, where we set $\gamma = (\log(2/\delta))^{-1/2}$ and $\lambda = 1$. In supervised learning, the random sample $S=(Z_i,Y_i)_{1 \leq i \leq k} \sim \mu^{\otimes k}$ is entirely available to the learner, and then considering the finite horizon setting with $m=1$ and $k$ equal to the sample size is natural. In that case, Algorithm~\ref{algorithm:clippedSsGM} makes a \emph{single pass} over the data. 
\item From Corollary~\ref{crl:avg_unbounded_hp}\ref{crl:avg_unbounded_hp_at},
 Theorem~\ref{thm:li_unbounded_fh}, and Theorem~\ref{thm:li_unbounded_at} analogous rates can be derived for the anytime setting and the last iterate (in expectation). We note that the anytime setting  allows to address an online learning scenario where the data 
 arrives continuously in a potentially infinite stream of batches and the predictive model is updated accordingly.

\item When $\mu$ is \emph{light-tailed}  the problem of learning with convex Lipschitz losses and $\phi(\zz)=\zz$ has been studied in \cite{Steinwart2008,Shalev2010,Dellavecchia2021,Dellavecchia2024}. The more general formulation of light tailed distribution assumes that $Z$ satisfies the following sub-Gaussian condition
\begin{align}
\label{eq:sl_light_tails}
\bbE \scalarp{Z,\zz}^r \leq C \cdot \sqrt{\bbE \scalarp{Z,\zz}^2} \cdot r^{r/2}, \quad (\forall \zz \in \HH, \forall r \geq 2),
\end{align}
where $C$ is an absolute numerical constant.

The canonical approach under \eqref{eq:sl_light_tails} is to perform \emph{regularized risk minimization}, i.e. to compute
\begin{align}
\label{eq:sl_rrm}
\hat{x}_{\eta,k} \in \underset{\xx \in \HH}{\argmin} \frac{1}{k} \sum_{i=1}^k \ell(\scalarp{\xx,Z_i}, Y_i) + \eta \norm{\xx}^2,
\end{align}
where $\eta > 0$. The best results for this approach are given in \cite{Dellavecchia2024}, where the authors show that with probability at least $1-\delta$ it holds
\begin{align}
\label{eq:sl_rrm_bound}
R(\hat{x}_{\eta,k}) - R^* \lesssim \frac{1}{\sqrt{k}} \bigg( \log \log k + \sqrt{\log\bigg(\frac{1}{\delta}\bigg)} \bigg),
\end{align}
where $\eta = c \cdot (\log(1/\delta))^{1/2}$ for some numerical constant $c > 0$.

We note that the sub-Gaussian assumption in \eqref{eq:sl_light_tails} implies that $Z$ has finite moments of any order. By contrast, our results ensures that, under the much weaker assumption \ref{A4}, the clipped sub-gradient method with finite horizon $k$ features a rate of convergence in high probability of order 
%
%
\begin{align*}
R(\bar{x}_{k}) - R^* \lesssim \frac{1}{\sqrt{k}} \bigg( 1 + \log\bigg(\frac{1}{\delta}\bigg) \bigg),
\end{align*}
which even improves \eqref{eq:sl_rrm_bound} by a $\log \log k$ factor,
although with a worse dependence on the confidence level. 
\item Alternative approaches are the ones considered in \cite{Brownlees2015,Chinot2020,Geoffrey2020,Lecue2020,Mathieu2021,Oliveira2023}. All these robust methods allow the same order as \eqref{eq:sl_avg_hp} under similar assumptions to clipped SsGM. However, their implementation is significantly more involved and their solution can be only approximated, limiting their practical applicability. 
\end{enumerate}
\end{remark}

\section{Numerical experiments}
\label{sec:experiments}
%
We consider the minimization of the function $f(x)=\|x\|_1$ in $\bbR^{d}$, with $d=100$, under an additive zero-mean component-wise independent Pareto noise with shape parameter 
$\alpha=p+0.001$, with $p\in \left]1,2\right[$, and scale parameter such that $\sigma=1$.
This way the noise has no moments of order strictly greater than $p$. For each run, the initial point $x_1$ is sampled uniformly at random from the surface of the unit sphere in $\bbR^d$. We run each algorithm $1000$ times and report the average value of $f$, we omit the error bars as they are of difficult interpretation due to the use of the $\log$-scale on the $y$-axis. In all experiments, we set the parameter $\varepsilon=0.01$ and the batch-size $m=1$.

\paragraph{{\upshape Comparison with SsGM}} 
The goal of this experiment is to show the advantage of clipping over SsGM in the noise regime described above with $p=1.1$. We consider the finite horizon setting and use the stepsize and clipping level schedules suggested by \Cref{crl:avg_unbounded}\ref{crl:avg_unbounded_fh} and \Cref{thm:li_unbounded_fh}. We consider two variants of SsGM: SsGM with the standard stepsize schedule $\gamma/\sqrt{k}$, and SsGM 2 using the same schedule as C-SsGM. We set the stepsize and clipping level parameters $\gamma$ and $\lambda$ via a grid search. The results, reported in  \Cref{tab:experiment_results}, confirm that the clipping strategy provides a notable acceleration of the convergence for both the average and the last iterate.

\begin{table}[t]
\begin{center}
\begin{tabular}{||c c c c||} 
\hline
Output & SsGM & SsGM2 & C-SsGM \\ [0.5ex] 
\hline\hline
$\bar{x}_k$ & 4.227 & 1.921 & 1.218 \\ 
\hline
$x_k$ & 5.767 & 0.102 & 0.003 \\ 
\hline
\end{tabular}
\end{center}
\vspace{-1ex}
\caption{Comparison of SsGM against its clipped version (C-SsGM). SsGM and SsGM 2 refer to the stepsize schedules of $1/\sqrt{k}$ and $1/k^{1/p}$ respectively. The figures represents the average value of $f$ at the output of algorithms.}
\label{tab:experiment_results}
\end{table}

\begin{figure*}[t!]
\centering
\includegraphics[scale=0.40]{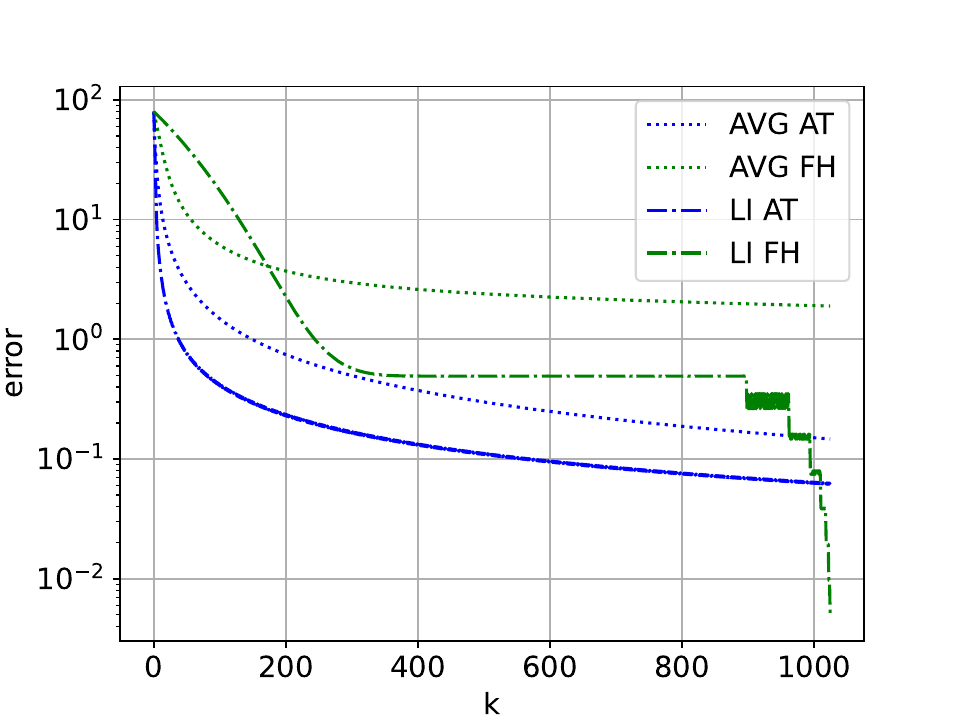}
\includegraphics[scale=0.40]{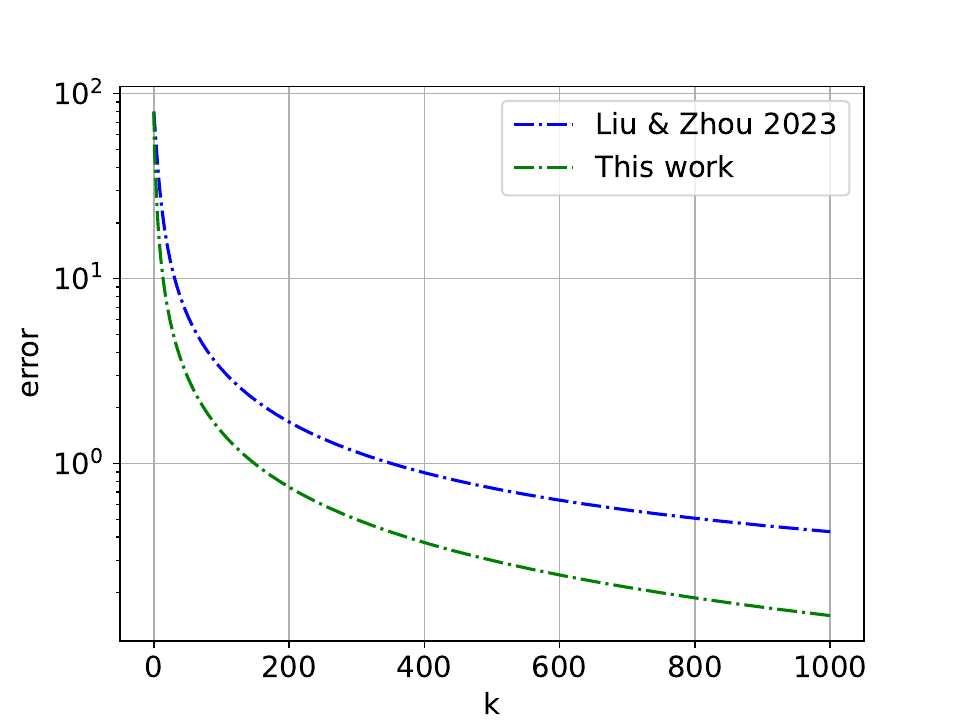}
\caption{Experimental Results. Left panel: comparison of the parameter settings of C-SsgM in the anytime (AT) and the finite horizon (FH) cases. Right panel: comparison between the parameter setting for the average iterate proposed in this work against that proposed in \cite{Liu2023b}.} 
\label{fig:experimental_results}
\end{figure*}

\paragraph{\upshape anytime vs. Finite-horizon} 
In this case we compare the parameter schedules for finite horizon and anytime settings. Results are showed in the left panel of \Cref{fig:experimental_results}. To simplify the plot we report the average performance along the entire time window of each method, but formally, the performance in the finite horizon setting should be only evaluated at the end of the execution time. For the last iterate, the finite horizon policy outperforms the anytime one, as predicted by the theory. 
%
\paragraph{\upshape Comparison with \cite{Liu2023b}} 
We compare the performance of the average iterate with the parameter setting of \Cref{crl:avg_unbounded}\ref{crl:avg_unbounded_at} against the setting of \cite[Theorem 4]{Liu2023b} in the anytime case. The results are showed in the right panel of \Cref{fig:experimental_results}. We notice that the parameter setting proposed in this paper outperforms that in \cite{Liu2023b} as hinted by our theory.

\section{Conclusions}
\label{sec:conclusion}
We considered nonsmooth convex problems and provided the first rates of convergence in expectation for the last iterate of the clipped SsGM algorithm. Our results optimally extend the convergence theory of SsGM for the canonical case of bounded variance to the heavy tails setting. We also improve the rate of the average iterate by shaving an exogenous $\log^{(2p-1)/p}\! k$ factor from the state-of-the-art. Whether the remaining $\log^{1/p}\! k$ is necessary or not, is an interesting open question. Another question is related to the possibility of a unified convergence analysis of the last iterate resulting in optimal rates in both the anytime and the finite horizon setting. Finally, remains 
open the question of rate in high probability for the last iterate under heavy-tailed noise.


\newpage

\begin{appendices}

In this part we provide full proofs of some results presented in the main paper
as well as additional technical facts used in the convergence analysis. Finally, we show how C-SsGM can be effectively implemented for the problem of supervised learning with kernels.

\section{Analysis of the average iterate}
\label{app:A}

We first provide a general result concerning the statistical properties of the clipped subgradient estimator.
Let $\xx \in \HH$ and $\lambda>0$, and let $(\xi_j)_{1\leq j \leq m}$ be a finite sequence
of independent copies of $\xi$. We set
\begin{equation*}
\uu = \bbE[\hat{\ug}(\xx,\xi)],\quad \bar{\ug} = \frac{1}{m} \sum_{j=1}^m \hat{\ug}(\xx,\xi_j),\quad
\tilde{\ug} = \textsc{CLIP}(\bar{\ug}, \lambda).
\end{equation*}
Notice that because of assumption \ref{H4}, there exists $\sigma_m>0$ such that
\begin{equation*}
\bbE[\norm{\bar{\ug}- \uu}^p] \leq \sigma_m^p.
\end{equation*}
For instance, when $p=2$, one can take $\sigma_m = \sigma/\sqrt{m}$.

\begin{lemma}
\label{lm:stat_prop}
Under the assumptions~{\rm\ref{H1}}--{\rm\ref{H4}} and the above notation, the following hold.
\begin{enumerate}[label={\rm(\roman*)}]
\item\label{lm:stat_prop_moment}
For $r\geq p$ we have $\bbE[\|\tilde{\ug}\|^r] \leq b_p \lambda^{r-p}$ \quad {\rm ($r$-th moment)},
\item\label{lm:stat_prop_bias}
$\norm{\bbE\tilde{\ug} - \uu} \leq a_p \cdot \lambda^{1-p}$ \quad {\rm (Bias)},
\item\label{lm:stat_prop_var}
$\bbE[\norm{\tilde{\ug}- \bbE \tilde{\ug}}^2] \leq b_p \cdot \lambda^{2-p}$
\end{enumerate}
where we recall {\rm(}see definition \eqref{eq:constants}{\rm)} that 
\begin{equation*}
a_p = (1 + 1/\varepsilon)^{p-1} \sigma_m^p,\quad b_p = 2^{p-1} ( \sigma_m^p + L^p).
\end{equation*}
\end{lemma}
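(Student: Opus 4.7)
My plan is to prove the three items in sequence, with item (i) providing the template for the others.

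\textbf{Item (i).} The key observation is that $\|\tilde{\ug}\| \leq \min\{\lambda, \|\bar{\ug}\|\}$ by definition of $\textsc{CLIP}$. Hence, for $r \geq p$,
\begin{equation*}
\|\tilde{\ug}\|^r = \|\tilde{\ug}\|^{r-p}\|\tilde{\ug}\|^p \leq \lambda^{r-p}\|\bar{\ug}\|^p.
\end{equation*}
The convexity inequality $\|a+b\|^p \leq 2^{p-1}(\|a\|^p + \|b\|^p)$ applied to $\bar{\ug} = (\bar{\ug}-\uu) + \uu$, together with $\|\uu\| \leq L$ (since $\uu \in \partial f(\xx)$ and $f$ is $L$-Lipschitz) and the hypothesis $\bbE\|\bar{\ug}-\uu\|^p \leq \sigma_m^p$, gives $\bbE\|\bar{\ug}\|^p \leq 2^{p-1}(\sigma_m^p + L^p) = b_p$. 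Taking expectations yields the claim.

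\textbf{Item (ii).} Since $\bbE\bar{\ug} = \uu$, one has $\bbE\tilde{\ug} - \uu = \bbE[\tilde{\ug} - \bar{\ug}]$. From the definition of the clipping operator,
\begin{equation*}
\tilde{\ug} - \bar{\ug} = \big(\min\{1, \lambda/\|\bar{\ug}\|\} - 1\big)\bar{\ug} = -\mathbbm{1}\{\|\bar{\ug}\|>\lambda\}\big(1 - \lambda/\|\bar{\ug}\|\big)\bar{\ug},
\end{equation*}
so $\|\tilde{\ug} - \bar{\ug}\| = \mathbbm{1}\{\|\bar{\ug}\|>\lambda\}(\|\bar{\ug}\| - \lambda)$. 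Hence Jensen's inequality gives $\|\bbE\tilde{\ug}-\uu\| \leq \bbE\big[\mathbbm{1}\{\|\bar{\ug}\|>\lambda\}(\|\bar{\ug}\|-\lambda)\big]$. Here is where the assumption $\lambda \geq \Le = (1+\varepsilon)L$ (used implicitly, as in Theorem~\ref{thm:avg_unbounded}) enters: on the event $\{\|\bar{\ug}\|>\lambda\}$ we have
\begin{equation*}
\|\bar{\ug}-\uu\| \geq \|\bar{\ug}\| - \|\uu\| > \lambda - L \geq \lambda - \frac{\lambda}{1+\varepsilon} = \frac{\varepsilon \lambda}{1+\varepsilon},
\end{equation*}
so the random variable $(1+\varepsilon)\|\bar{\ug}-\uu\|/(\varepsilon\lambda)$ is strictly greater than $1$ on that event. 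Multiplying $\|\bar{\ug}\|-\lambda \leq \|\bar{\ug}-\uu\|$ by this quantity raised to the power $p-1 \geq 0$ gives
\begin{equation*}
\bbE\big[\mathbbm{1}\{\|\bar{\ug}\|>\lambda\}(\|\bar{\ug}\|-\lambda)\big] \leq \Big(\tfrac{1+\varepsilon}{\varepsilon\lambda}\Big)^{p-1}\bbE\|\bar{\ug}-\uu\|^p \leq (1+1/\varepsilon)^{p-1}\sigma_m^p\,\lambda^{1-p} = a_p\lambda^{1-p}.
\end{equation*}

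\textbf{Item (iii).} This is immediate from item (i) with $r=2$ via the variance identity: $\bbE\|\tilde{\ug}-\bbE\tilde{\ug}\|^2 \leq \bbE\|\tilde{\ug}\|^2 \leq b_p\lambda^{2-p}$.

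The only delicate step is the Markov-type manipulation in item (ii), which requires choosing the right exponent (namely $p-1$) to convert the raw deviation $\|\bar{\ug}-\uu\|$ into its $p$-th moment while extracting the factor $\lambda^{1-p}$; all other computations are direct consequences of the clipping bound $\|\tilde{\ug}\|\leq\lambda$ and the convexity inequality $\|a+b\|^p\leq 2^{p-1}(\|a\|^p+\|b\|^p)$.
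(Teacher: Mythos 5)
Your proof is correct and follows essentially the same route as the paper for all three items: bound $\|\tilde{\ug}\|^r$ via $\|\tilde{\ug}\|\le\lambda$ and the $2^{p-1}$-convexity inequality for (i), use the exact expression for $\tilde{\ug}-\bar{\ug}$ and multiply by a factor $\geq 1$ raised to the power $p-1$ to extract the $p$-th moment for (ii), and reduce (iii) to (i) with $r=2$. The only cosmetic difference in (ii) is that you fold the inequality $\lambda-L\geq\varepsilon\lambda/(1+\varepsilon)$ in directly, whereas the paper first obtains the bound $\sigma_m^p/(\lambda-L)^{p-1}$ and converts it afterward via the monotonicity of $t\mapsto t/(t-L)$; the computation is the same.
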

\begin{proof}
\ref{lm:stat_prop_moment}: We have
\begin{equation*}
\bbE[\norm{\tilde{\ug}}^r] = \bbE[\norm{\tilde{\ug}}^p \norm{\tilde{\ug}}^{r-p}] \leq \lambda^{r-p}\bbE[\norm{\tilde{\ug}}^p].
\end{equation*}
Now, by the convexity of $t \mapsto t^p$ with $p \geq 1$, the Lipschitz continuity of $f$, and the assumption on the noise, it follows that
\begin{align*}
\bbE \norm{\tilde{\ug}}^p &\leq \bbE \norm{\bar{\ug}}^p = \bbE \norm{\bar{\ug} - \uu + \uu}^p \leq \bbE (\norm{\bar{\ug} - \uu} + \norm{\uu})^p \\
&= 2^p \bbE \Big(\frac{1}{2} \norm{\bar{\ug} - \uu} + \frac{1}{2} \norm{\uu}\Big)^p \\
&\leq 2^{p-1} \Big( \bbE \norm{\bar{\ug} - \uu}^p + \|\uu\|^p \Big) \leq 2^{p-1}\left(
\sigma_m^p
+ L^p\right) = b_p.
\end{align*}

\ref{lm:stat_prop_bias}: Let $\chi \coloneqq \bbI\big(\|\bar{\ug}\| > \lambda\big)$. 
Recall that by assumption $\lambda > L \geq \norm{\uu}$, so on the event $\|\bar{\ug}\| > \lambda$ we have
\begin{align*}
\|\bar{\ug}\| \leq \|\bar{\ug}-\uu\| + \|\uu\|\ \Rightarrow\ \|\bar{\ug} - \uu\| > \|\bar{\ug}\| - \|\uu\| > \lambda - L.
\end{align*}
Thus, setting $\chi' \coloneqq \bbI\big(\|\bar{\ug}- \uu\| > \lambda - L \big)$, we have $\chi \leq \chi'$. Moreover, by Markov's inequality,
\begin{align*}
\bbE\big[\chi' \big] = \bbP \big( \|\bar{\ug} - \uu \| \geq \lambda-L \big) \leq 
\frac{\sigma_m^p}{(\lambda-L)^p}
\end{align*}
Now note that by definition, $\tilde{\ug} = (1-\chi) \bar{\ug} + \chi (\lambda\norm{\bar{\ug}}^{-1} \bar{\ug}) = \bar{\ug} + \chi (\lambda \| \bar{\ug} \|^{-1} - 1) \bar{\ug}$. Thus,
\begin{align*}
\norm{\bbE[\tilde{\ug}] - \uu} &= \norm{\bbE[\tilde{\ug} - \bar{\ug}]} \leq \bbE \norm{\tilde{\ug} - \bar{\ug}} \\[0.7ex]
&= \bbE \norm{\chi (\lambda \norm{\bar{\ug}}^{-1} - 1) \bar{\ug}} 
= \bbE \big[\chi \abs{\lambda \norm{\bar{\ug}}^{-1} - 1} \norm{\bar{\ug}}\big] \\[1ex]
&= \bbE \big[ \chi(\norm{\bar{\ug}} - \lambda) \big] \leq \bbE \big[ \chi(\norm{\bar{\ug}} - \norm{\ug}) \big] \leq \bbE \big[\chi\norm{\bar{\ug} - \uu}\big] \\[0.7ex]
&\leq \bbE\big[ \chi' \norm{\bar{\ug} - \uu}\big] \leq \bbE \bigg[ \chi'\norm{\bar{\ug} - \uu}\Big(\frac{\norm{\bar{\ug}-\uu}}{\lambda-L}\Big)^{p-1} \bigg]\\
&\leq \bbE \norm{\bar{\ug} - \uu}^p\Big(\frac{1}{\lambda-L}\Big)^{p-1} \leq 
\frac{\sigma_m^p}{(\lambda-L)^{p-1}}.
\end{align*}
Finally, since the function $x \mapsto t/(t-L)$ is decreasing over the interval $\left[(1+\varepsilon)L,\infty\right[$ 
we have $(1+\varepsilon)/\varepsilon \geq \lambda/(\lambda-L)$ and hence $\lambda^{-1}(1+\varepsilon^{-1})\geq (\lambda-L)^{-1}$. The statement follows by recalling the definitions of $a_p$ and $b_p$.

\ref{lm:stat_prop_var}: By \ref{lm:stat_prop_moment} we have
\begin{align*}
\bbE \big[\norm{\tilde{\ug} - \bbE[\tilde{\ug}]}^2\big] &\leq \bbE \big[\norm{\tilde{\ug}}^2\big] \leq b_p \lambda^{2-p}.
\end{align*}
This concludes the proof.
\end{proof}

\subsection{Convergence in Expectation}

We prove \Cref{lm:useful_bounds} and \Cref{lm:recurrences}, restated below for reader's convenience.

\statistical*
\begin{proof}
\ref{eq:scalarp}:
Let $\xx \in \HH$.
By applying the tower rule of expectation, noting that $x_k-\xx$ is $\cF_k$-measurable, and applying \Cref{lm:stat_prop}\ref{lm:stat_prop_bias}, one obtains
\begin{align*}
\bbE [\scalarp{\tilde{\ug}_k-\ug_k,x_k-\xx}] &= \bbE\big[ \bbE_k[\scalarp{\tilde{\ug}_k-\ug_k,x_k-\xx}] \big]=
\bbE \big[ \scalarp{\bbE_k [\tilde{\ug_k}]-\ug_k,x_k-\xx} \big] \\
& \leq \bbE \big[ \|\bbE_k [\tilde{\ug_k}] - \ug_k \| \cdot \|x_k-\xx\| \big] \leq \bigg(1+\frac{1}{\varepsilon}\bigg)^{p-1} 
\sigma_m^p
\cdot \frac{\bbE \norm{x_k-\xx}}{\lambda_k^{p-1}}.
\end{align*}
\ref{eq:norm_squared}: We derive from \Cref{lm:stat_prop}\ref{lm:stat_prop_moment} that
\begin{align*}
\bbE \big[\norm{\tilde{\ug}_k}^2\big] &= \bbE \big[\bbE_k \big[\norm{\tilde{\ug}_k}^2\big]\big] = \bbE \big[\bbE_k \big[\norm{\tilde{\ug}_k}^p \norm{\tilde{\ug}_k}^{2-p}\big]\big] 
\leq \bbE \big[\bbE_k \big[\norm{\tilde{\ug}_k}^p \lambda_k^{2-p}\big]\big] \leq b_p \cdot \lambda_k^{2-p}.
\end{align*}
The statement follows.
\end{proof}
\recurrences*
\begin{proof}
\ref{lm:recurrences_ii}:
By summing the inequality $\beta_i + \alpha_{i+1}-\alpha_i \leq b_i \sqrt{\alpha_i}+c_i$ from $i=1$ to $i=k$, we obtain
\begin{equation}
\label{eq:20250415a}
    \sum_{i=1}^k \beta_i + \alpha_{k+1} \leq \alpha_1 + \sum_{i=1}^k c_i + \sum_{i=1}^k b_i \sqrt{\alpha_i}.
\end{equation}
Thus, setting $S_k = \alpha_1 + \sum_{i=1}^k c_i$, we have
\begin{equation*}
\forall k\in \N_0\colon\ \alpha_{k+1} \leq S_k + \sum_{i=1}^k b_i \sqrt{\alpha_i}.
\end{equation*}
This inequality is essentially the discrete version of the one occurring in the Bihari--LaSalle Lemma. So we can proceed similarly to the proof of such lemma. Set, for every $k \in \N_0$, $\gamma_{k+1} = S_k + \sum_{i=1}^k b_i \sqrt{\alpha_i}$. Then it is clear that $(\gamma_k)_{k \in \N}$ is increasing and that $\alpha_k\leq \gamma_k$. Therefore,
\begin{equation*}
    \forall\, k \in \N_0\colon\ \gamma_{k+1} = S_k + \sum_{i=1}^k b_i \sqrt{\alpha_i} \leq S_k + \sum_{i=1}^k b_i \sqrt{\gamma_i}\leq S_k + \bigg(\sum_{i=1}^k b_i\bigg) \sqrt{\gamma_{k+1}}.
\end{equation*}
Thus, $\sqrt{\gamma_{k+1}}$ is a positive solution of the quadratic equation
\begin{equation*}
    t^2 - \bigg( \sum_{i=1}^k b_i\bigg) t - S_k \leq 0
\end{equation*}
and hence
\begin{equation}
\label{eq:20250416a}
    \sqrt{\gamma_{k+1}} \leq \frac 1 2 \sum_{i=1}^k b_i+ \bigg( \Big(\frac1 2 \sum_{i=1}^k b_i\Big)^2 + S_k\bigg)^{1/2}.
\end{equation}
In the end \ref{lm:recurrences_ii} follows by noting that $\sqrt{\alpha_{k+1}}\leq \sqrt{\gamma_{k+1}}$, $\sqrt{a+b}\leq \sqrt{a}+ \sqrt{b}$,
and that
the right-hand side of \eqref{eq:20250416a} is increasing.

\ref{lm:recurrences_i}:
Setting $A_k = \max_{1\leq i\leq k}\sqrt{\alpha_i}$,
we derive from \ref{lm:recurrences_ii}, that
\begin{align*}
   \sum_{i=1}^k b_i \sqrt{\alpha_i}\leq  A_k\sum_{i=1}^k b_i 
    &\leq \frac{1}{2\varepsilon} A^2_k+  \frac{\varepsilon}{2}\bigg(\sum_{i=1}^k b_i\bigg)^2 \\
    &\leq \frac{1}{4\varepsilon \delta} \bigg(\sum_{i=1}^k b_i\bigg)^2+ \frac{\delta}{4\varepsilon} \bigg(\alpha_1 + \sum_{i=1}^k c_i \bigg) + \frac{\varepsilon}{2} \bigg(\sum_{i=1}^k b_i\bigg)^2
\end{align*}
In the end, recalling \eqref{eq:20250415a}, we have
\begin{equation*}
    \sum_{i=1}^k \beta_i + \alpha_{k+1} \leq \bigg(1+ \frac{\delta}{4\varepsilon}\bigg)\bigg(\alpha_1 + \sum_{i=1}^k c_i\bigg)+ \frac{1}{2}\bigg( \frac{1}{2\varepsilon \delta}+\varepsilon\bigg) \bigg(\sum_{i=1}^k b_i \bigg)^2
\end{equation*}
Now
\begin{equation*}
    \frac{1}{2\varepsilon \delta}+ \varepsilon = 2
    \ \Rightarrow\ \delta = \frac{1}{2\varepsilon (2-\varepsilon)}
\end{equation*}
and with such $\delta$ we have
\begin{equation*}
    \sum_{i=1}^k \beta_i + \alpha_{k+1} \leq \bigg(1+ \frac{1}{8\varepsilon^2(2-\varepsilon)}\bigg)\bigg(\alpha_1 + \sum_{i=1}^k c_i\bigg) + \bigg(\sum_{i=1}^k b_i \bigg)^2.
\end{equation*}
Optimizing $\varepsilon$, we get $\varepsilon =4/3$
and the statement follows.
\end{proof}
The next results is useful to prove the subsequent results.
\begin{lemma}
\label{lm:technical_lemma}
Let $k\in \N$. Then 
\begin{equation*}
\sum_{i = 1}^k \frac{1}{i (1+\log i)} \leq 1 + \log (1+\log k)
\end{equation*}
\end{lemma}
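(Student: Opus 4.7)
The plan is to use the standard integral comparison for decreasing positive functions. Define $g\colon [1,\infty[ \to \R_{++}$ by $g(x) = \frac{1}{x(1+\log x)}$. First I would verify that $g$ is decreasing on $[1,\infty[$: both $x$ and $1+\log x$ are positive and increasing there, so their product is increasing and $g$ is decreasing. Consequently, for every integer $i \geq 2$,
\begin{equation*}
\frac{1}{i(1+\log i)} = g(i) \leq \int_{i-1}^{i} g(x)\,\ud x.
\end{equation*}

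Next, I would compute the antiderivative of $g$ by the substitution $u = 1+\log x$, $\ud u = \ud x/x$, which gives
\begin{equation*}
\int \frac{\ud x}{x(1+\log x)} = \log(1+\log x) + C,
\end{equation*}
and hence $\int_1^k g(x)\,\ud x = \log(1+\log k) - \log 1 = \log(1+\log k)$.

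Summing the pointwise bound from $i=2$ to $i=k$ and telescoping the integrals yields
\begin{equation*}
\sum_{i=2}^{k} \frac{1}{i(1+\log i)} \leq \int_{1}^{k} g(x)\,\ud x = \log(1+\log k).
\end{equation*}
Finally, the $i=1$ term contributes $\frac{1}{1\cdot(1+\log 1)} = 1$, so adding it to both sides gives the claim
\begin{equation*}
\sum_{i=1}^{k} \frac{1}{i(1+\log i)} \leq 1 + \log(1+\log k).
\end{equation*}

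There is essentially no obstacle here; the only subtlety is handling the index $i=1$ separately, since the integral comparison $g(i)\leq \int_{i-1}^{i} g$ requires $i-1\geq 1$ in order for $g$ to be well-defined and the monotonicity argument to apply. Splitting off the first term resolves this cleanly.
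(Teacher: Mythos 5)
Your proof is correct and takes essentially the same route as the paper: split off the $i=1$ term (which equals $1$), bound the remaining sum by the integral of the decreasing function $g(x)=1/(x(1+\log x))$, and evaluate via the substitution $u=1+\log x$. The only cosmetic difference is that you spell out the per-term integral comparison $g(i)\leq\int_{i-1}^i g$ before telescoping, whereas the paper jumps directly to the inequality $\sum_{i=2}^k g(i)\leq\int_1^k g$.
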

\begin{proof}
Since $t \mapsto [t (1 + \log t)]^{-1}$ is decreasing in $\R_{++}$, it holds
\begin{align*}
\sum_{i=1}^k \frac{1}{i (1 + \log i)} &= 1 + \sum_{i=2}^k \frac{1}{i (1 + \log i)} 
\leq 1 + \int_{1}^{k} \frac{1}{t (1+\log t)} dt \\
&= 1 + \int_{1}^{1+\log k} \frac{1}{u} du 
= 1 + \Bigg[ \log \abs{u} \Bigg]_{1}^{1+\log k} \leq 
1 + \log (1 + \log k).
\end{align*}
The statement follows.
\end{proof}
We now restate and prove \Cref{crl:avg_unbounded}.
\avgunbounded*
\begin{proof}
\ref{crl:avg_unbounded_at}. Plugging the parameters' definitions into the result of \Cref{thm:avg_unbounded}, leads to 
\begin{align*}
\bbE[f(\bar{x}_k) - f_*] \leq \frac{(1 + \log k)^{1/p}}{\gamma k^{(p-1)/p}} \Bigg[&\norm{x_{1}-\xx_*}^2 + 2 a_p^2 \bigg(\frac{\gamma}{\lambda^{p-1}}\sum_{i=1}^k \frac{1}{i (1 + \log i)} \bigg)^2 \\
&\qquad+ b_p \max\{\Le,\lambda\}^{(2-p)} \gamma^2 \sum_{i=1}^k \frac{1}{i (1 + \log i)} \Bigg].
\end{align*}
Now, by \Cref{lm:technical_lemma} and setting for the sake of brevity $\alpha_k = (1+\log(1+\log k))$, we have
\begin{align*}
\bbE[f(\bar{x}_k)  - f_*] \leq \frac{(1 + \log k)^{1/p}}{k^{(p-1)/p}} \Bigg[&\frac{\norm{x_{1}-\xx_*}^2}{\gamma} + \gamma\Bigg( 2  \bigg(\frac{a_p}{\lambda^{p-1}}\bigg)^2 \alpha_k^2 \\
&\hspace{10ex}+ b_p \max\{\Le,\lambda\}^{(2-p)} \alpha_k \Bigg)  \Bigg].
\end{align*}
The statement follows.

\ref{crl:avg_unbounded_fh}. Applying the parameters' definitions to the result of \Cref{thm:avg_unbounded}, leads to
\begin{align*}
\bbE[f(\bar{x}_k) &- f_*] \leq \frac{1}{\gamma k^{(p-1)/p}} \Bigg(\norm{x_{1}-\xx_*}^2 +  2 \gamma^2 \bigg( \frac{a_p}{\lambda^{p-1}} \bigg)^2 + b_p \max\{\Le,\lambda\}^{(2-p)} \gamma^2 \Bigg),
\end{align*}
and bringing $1/\gamma$ in the parentheses, (\ref{eq:avg_unbounded_fh})  follows.     
\end{proof}
\subsection{Convergence in high probability}
%
%
%
%
%
Here we restate and prove \Cref{crl:avg_unbounded_hp}.
\crlavgunboundedhp*
\begin{proof}
\ref{crl:avg_unbounded_hp_at}. Set $(\gamma_i)_{i \in \bbN}$ and  $(\lambda_i)_{i \in \bbN}$ as in \ref{AT}. Then we can control $A_k, B_k, C_k, D_k,$ and $E_k$ as follows. We set again, for the sake of brevity, $\eta_i = i(1+\log i)$ and $\alpha_k = 1+ \log(1+\log k)$.
\begin{itemize}
\item For $A_k$ and $B_k$, it follows from the last line of the proof of  in \Cref{crl:avg_unbounded}, that
\begin{align}
\label{eq:A_k_at}
A_k &\leq \frac{\gamma a_p}{\lambda^{p-1}}\alpha_k = \gamma\frac{v_p}{4} \alpha_k, \\
\label{eq:B_k_at}
B_k &\leq b_p  \gamma^2 \max\{\Le, \lambda\}^{2-p} \alpha_k = \gamma^2 w_p \alpha_k.
\end{align}
\item For $C_k$ and $D_k$ we have
\begin{align}
\label{eq:C_k_at}
C_k &= \max_{i \in [k]} \gamma_i \lambda_i = \max_{i \in [k]} \frac{\gamma}{\eta_i^{1/p}} \max\{\Le, \lambda \eta_i^{1/p}\} \leq \gamma  \max\{\Le, \lambda\}, \\
\label{eq:D_k_at}
D_k &=  \max_{i \in [k]} \gamma_i^2 \lambda_i^2 =  C_k^2 \leq
\gamma^2 \max\{\Le, \lambda\}^2.
\end{align}
\item Finally, for $E_k$ we have
\begin{align}
\label{eq:E_k_at}
E_k &= b_p \sum_{i=1}^{k} \gamma_i^4 \lambda_i^{4-p} = b_p \sum_{i=1}^{k}  \frac{\gamma^4}{\eta_i^{4/p}} \max\{\Le, \lambda \eta_i^{1/p}\}^{4-p} \leq 
\gamma^4 w_p \max\{\Le, \lambda\}^2 \alpha_k.
\end{align}
\end{itemize}
Putting \eqref{eq:A_k_at}-\eqref{eq:E_k_at} together in \eqref{eq:avg_unbounded_hp}, yields to
\begin{align*}
f(\bar{x}_k) -f_* &\leq \frac{(1+\log k)^{1/p}}{\gamma k^{(p-1)/p}} \Bigg[ \norm{x_{1}-x_*} +  
\gamma \frac{v_p}{4}\alpha_k
+ \gamma^2 w_p \alpha_k\nonumber\\
&\hspace{16.2ex}+\frac{8}{3}\gamma  \max\{\Le, \lambda\}\big(1 +\gamma \max\{\Le, \lambda\} \big) \cdot \log \Big(\frac{2}{\delta}\Big) \\
&\hspace{16.2ex}+ \gamma\sqrt{ w_p \alpha_k} 
\Big( 1+ \gamma\max\{\Le, \lambda\}\Big) \cdot \sqrt{\log \bigg(\frac{2}{\delta}\bigg)} \Bigg]^2 \\
&\leq  \frac{(1+\log k)^{1/p}}{\gamma k^{(p-1)/p}} 4 \Bigg[ \norm{x_{1}-x_*}^2 + \gamma^2\bigg( \frac{v_p}{4} + \gamma w_p\bigg)^2\alpha_k^2  \nonumber\\
&\hspace{18.2ex}+\frac{64}{9}\gamma^2  \max\{\Le, \lambda\}^2\big(1 +\gamma \max\{\Le, \lambda\} \big)^2 \cdot \log^2 \Big(\frac{2}{\delta}\Big) \\
&\hspace{18.2ex}+ w_p \gamma^2\alpha_k \big(1+ \gamma \max\{\Le, \lambda\} \big)^2 \cdot \log \bigg(\frac{2}{\delta}\bigg) \Bigg]\\
&=  \frac{(1+\log k)^{1/p}}{k^{(p-1)/p}}   \Bigg[ \frac{4}{\gamma}\norm{x_{1}-x_*}^2 + \gamma\bigg( \frac{v_p}{2} + 2\gamma w_p\bigg)^2\alpha_k^2  \nonumber\\
&\hspace{16.2ex}+32\gamma  \max\{\Le, \lambda\}^2\big(1 +\gamma \max\{\Le, \lambda\} \big)^2 \cdot \log^2 \Big(\frac{2}{\delta}\Big) \\
&\hspace{16.2ex}+ 4 w_p \gamma\alpha_k \big(1+ \gamma \max\{\Le, \lambda\} \big)^2 \cdot \log \bigg(\frac{2}{\delta}\bigg) \Bigg]
\end{align*}
Finally, substituting $\gamma = (\log(2/\delta))^{-1}$, since $\gamma\leq 1$ (being $\delta\leq 2/e$),
one obtains
\begin{align*}
f(\bar{x}_k) -f_* &\leq \frac{(1+\log k)^{1/p}}{k^{(p-1)/p}}   \Bigg[ 4\norm{x_{1}-x_*}^2 \log\Big(\frac{2}{\delta}\Big) + \bigg( \frac{v_p}{2} + 2 w_p\bigg)^2\alpha_k^2  \nonumber\\
&\hspace{18.2ex}+32  \max\{\Le, \lambda\}^2\big(1 + \max\{\Le, \lambda\} \big)^2 \cdot \log \Big(\frac{2}{\delta}\Big) \\
&\hspace{18.2ex}+ 4 w_p \alpha_k \big(1+ \max\{\Le, \lambda\} \big)^2 \Bigg]\\
&= \frac{(1+\log k)^{1/p}}{k^{(p-1)/p}}   \Bigg[  \bigg( \frac{v_p}{2} + 2 w_p\bigg)^2\alpha_k^2  +4 w_p \alpha_k \big(1+ \max\{\Le, \lambda\} \big)^2\nonumber\\
&\hspace{18.2ex}+\Big(32  \max\{\Le, \lambda\}^2\big(1 + \max\{\Le, \lambda\} \big)^2 + 4\norm{x_{1}-x_*}^2\Big) \cdot \log \Big(\frac{2}{\delta}\Big)  \Bigg]
\end{align*}
%
%
The statement follows.

\ref{crl:avg_unbounded_hp_fh}. Set $(\gamma_i)_{1 \leq i \leq k}$ and  $(\lambda_i)_{1 \leq i \leq k}$ as in \ref{FH}. Then we can control $A_k, B_k, C_k, D_k,$ and $E_k$ as follows.
\begin{itemize}
\item For $A_k$ and $B_k$, it follows from the last line of the proof of \Cref{crl:avg_unbounded}, that
\begin{align}
\label{eq:A_k_fh}
A_k &\leq a_p \frac{\gamma}{\lambda^{p-1}} =\gamma \frac{v_p}{4}, \\
\label{eq:B_k_fh}
B_k &\leq \gamma^2 b_p \max\{\Le, \lambda\}^{2-p}  = \gamma^2 w_p;
\end{align}
\item For $C_k$ and $D_k$ notice that
\begin{align}
\label{eq:C_k_fh}
C_k &= \max_{i \in [k]} \gamma_i \lambda_i = \frac{\gamma}{k^{1/p}} \max\{\Le, \lambda k^{1/p}\} \leq \gamma  \max\{\Le, \lambda\}, \\
\label{eq:D_k_fh}
D_k &=  \max_{i \in [k]} \gamma_i^2 \lambda_i^2 = \bigg(\frac{\gamma}{k^{1/p}} \max\{\Le, \lambda k^{1/p}\}\bigg)^2 \leq  \gamma^2 \max\{\Le, \lambda\}^2;
\end{align}
\item Finally, for $E_k$ notice that
\begin{align}
\label{eq:E_k_fh}
E_k &= b_p \sum_{i=1}^{k} \gamma_i^4 \lambda_i^{4-p}  \leq b_p \gamma^4 \max\{\Le, \lambda\}^{4-p} = \gamma^4 w_p \max\{\Le, \lambda\}.
\end{align}
\end{itemize}
We note that the bounds in \eqref{eq:A_k_fh}-\eqref{eq:E_k_fh}
are exactly the same of those in \eqref{eq:A_k_at}-\eqref{eq:E_k_at} except for the presence of $\alpha_k$. So we can proceed similarly by just replacing $\alpha_k$ with $1$ in the derivation given in \ref{crl:avg_unbounded_hp_at}. Also, since $\gamma_i$ is now constant along the $k$ iterates, we will have $\big((1+\log k)^{1/p}\big)/k^{(p-1)/p}$ replaced by $1/k^{(p-1)/p}$ in front of the bound.

\end{proof}

\section{Technical lemmas for the analysis of the last iterate}
\label{app:B}
\subsection{Finite-horizon setting}
%
We provide the proof of the following result taken from \cite{Jain2021}.

\keyepochlenght*
%
%
\begin{proof}
Noting that for every $x \geq 0$, it holds $2 \lceil x \rceil - 1 \leq \lceil 2x \rceil \leq 2 \lceil x \rceil$ and recalling the definition of $k_j$, we have
\begin{align*}
k_{j+1}-k_j &= \lceil k/2^{j} \rceil - \lceil k/2^{j+1} \rceil = \lceil 2 \cdot (k/2^{j+1}) \rceil - \lceil 2\cdot (k/2^{j+2}) \rceil \leq 2 \lceil k/2^{j+1} \rceil - 2 \lceil k/2^{j+2} \rceil + 1 \\
&= 2(\lceil k/2^{j+1} \rceil - \lceil k/2^{j+2} \rceil) + 1 = 2(k_{j+2}-k_{j+1} ) + 1 \\
&\leq 4 (k_{j+2}-k_{j+1} ),
\end{align*}
where in the last inequality we used the fact $k_{j+2}-k_{j+1}$
is an integer  $\geq 1$.
\end{proof}
%
%
%

\subsection{Anytime setting}
For completeness, we give a proof of the results of \Cref{lm:key_inquelity_at},
which appeared in \cite{Lin2018}.
%
\keyinequalityat*
\begin{proof}
\ref{lm:key_inquelity_at_i}:
For every $j \in [k]$ define $m_j = \frac{1}{j} \sum_{i=k-j+1}^k \beta_i$ (which is the arithmetic mean of the last $j$ terms in the sequence $(\beta_i)_{1 \leq i \leq k}$). Then, for $j<k$, we have
\begin{align*}
m_j - m_{j+1} &= \frac{1}{j} \sum_{i=k-j+1}^k \beta_i - \frac{1}{j+1} \sum_{i=k-j}^k \beta_i\\
& = \frac{1}{j(j+1)}\bigg[(j+1) \sum_{i=k-j+1}^k \beta_i- j  \sum_{i=k-j}^k \beta_i \bigg]\\
& = \frac{1}{j(j+1)}\bigg[\sum_{i=k-j+1}^k \beta_i+j \bigg(\sum_{i=k-j+1}^k \beta_i-   \sum_{i=k-j}^k \beta_i\bigg) \bigg]\\
& = \frac{1}{j(j+1)}\bigg[\sum_{i=k-j+1}^k \beta_i-j \beta_{k-j} \bigg]\\
& = \frac{1}{j(j+1)} \sum_{i=k-j+1}^k (\beta_i- \beta_{k-j}).
\end{align*}
Therefore,
\begin{equation*}
\sum_{j=1}^{k-1} \frac{1}{j(j+1)} \sum_{i=k-j+1}^k (\beta_i- \beta_{k-j}) = \sum_{j=1}^{k-1} (m_j- m_{j+1})
= m_1 - m_k  = \beta_k - \frac{1}{k}\sum_{i=1}^k \beta_i
\end{equation*}
and the statement follows.

\ref{lm:key_inquelity_at_iii}:
We start by noting that
\begin{equation}
\label{eq:20250403b}
\sum_{j=1}^{k-1} \frac{1}{j(j+1)}\sum_{i=k-j}^k \beta_i
= \sum_{i=1}^{k-1} \frac{\beta_i}{k-i} - \frac 1 k \sum_{i=1}^k \beta_i + \beta_k.
\end{equation}
Indeed, define $\delta_{i,j} = 1$, if $i\geq k-j$ and $0$, otherwise. Then
exchanging the order in the sum, we have
\begin{align*}
\sum_{j=1}^{k-1}\frac{1}{j(j+1)} \sum_{i=k-j}^k \beta_i 
&=\sum_{j=1}^{k-1}\frac{1}{j(j+1)} \sum_{i=k-j}^{k-1} \beta_i + \sum_{j=1}^{k-1}\frac{1}{j(j+1)} \beta_k\\ 
&=\sum_{j=1}^{k-1} \sum_{i=1}^{k-1} \frac{1}{j(j+1)} \beta_i\delta_{i,j} + \sum_{j=1}^{k-1}\frac{1}{j(j+1)} \beta_k\\ 
&= \sum_{i=1}^{k-1}\sum_{j=1}^{k-1}\frac{1}{j(j+1)} \beta_i  \delta_{i,j} + \sum_{j=1}^{k-1}\frac{1}{j(j+1)} \beta_k\\ 
&=\sum_{i=1}^{k-1}\sum_{j=k-i}^{k-1} \frac{1}{j(j+1)} \beta_i+ \sum_{j=1}^{k-1}\frac{1}{j(j+1)} \beta_k\\
&=\sum_{i=1}^{k-1}\bigg( \frac{1}{k-i}- \frac{1}{k}\bigg) \beta_i+ \bigg( 1- \frac{1}{k}\bigg) \beta_k\\
&= \sum_{i=1}^{k-1} \frac{\beta_i}{k-i} - \frac{1}{k}\sum_{i=1}^k \beta_i + \beta_k.
\end{align*}
Equation~\eqref{eq:20250403b} follows. Now, set $\eta_i = i \beta_i$
and note that
\begin{align*}
\sum_{i=1}^{k-1} \frac{\beta_i}{(k-i)} 
& = \sum_{i=1}^{k-1} \frac{\eta_i}{i(k-i)}\\
&= \frac{1}{k} \sum_{i=1}^{k-1} \Bigg( \frac{1}{i} + \frac{1}{k-i} \Bigg) \eta_i = \frac{1}{k} \sum_{i=1}^{k-1} \frac{\eta_i}{i} + \frac{1}{k} \sum_{i=1}^{k-1} \frac{\eta_i}{k-i}.
\end{align*}
Since $k-i\leq i$ iff $i \geq \lceil k/2 \rceil$, 
we have
\begin{align*}
 \sum_{i=1}^{k-1}  \frac{\eta_i}{k-i} &=  \sum_{i=1}^{\lceil k/2 \rceil-1} \frac{\eta_i}{k-i} 
+ \sum_{i=\lceil k/2 \rceil}^{k-1} \frac{\eta_i}{k-i} \overset{(*)}{\leq}  \sum_{i=1}^{\lceil k/2 \rceil-1} \frac{\eta_i}{i} +  \sum_{i=\lceil k/2 \rceil}^{k-1} \frac{\eta_{k-i}}{k-i} \leq 2\sum_{i=1}^{k-1} \frac{\eta_i}{i}
\end{align*}
where in inequality $(*)$ we used that the $\eta_i$'s are decreasing.
Hence
\begin{equation*}
\sum_{i=1}^{k-1} \frac{\beta_i}{(k-i)}  \leq \frac{3}{k}\sum_{i=1}^{k-1} \beta_i = 
\frac{3}{k}\sum_{i=1}^{k} \beta_i - \frac{3}{k} \beta_k.
\end{equation*}
Finally, combining the above inequality with \eqref{eq:20250403b}, the statement follows.
\end{proof}
%

\section{Implementation of the clipped SsGM with kernels}
\label{sec:appC}
In this section we show that the assumptions \ref{H1}-\ref{H4} are satisfied for problem
\eqref{eq:statistical_learning} and provide the related details concerning the implementation of Algorithm \ref{algorithm:clippedSsGM}.

\paragraph{Checking assumptions}
\begin{itemize}
\item Hypothesis \ref{H1} is satisfied. 
Assumption \ref{A4} implies that 
\begin{align*}
\ell(\scalarp{\xx, \phi(Z)}, Y) &\leq \abs{\ell(\scalarp{\xx, \phi(Z)}, Y) - \ell(0,Y)} + \ell(0,Y)\\
&\leq \mathcal{L} \abs{\scalarp{\xx, \phi(Z)}} + \ell(0,Y)
\leq \mathcal{L} \norm{\xx}\norm{\phi(Z)} + \ell(0,Y)
\end{align*}
and hence, since $\bbE [\norm{\phi(Z)}] < \infty$ and $\bbE[\ell(0,Y)]<+\infty$,
due to assumptions \ref{A3} and \ref{A4}, we have that $R(\xx)<+\infty$ for every $\xx \in \HH$.
Moreover, for every $\xx_1,\xx_2\in \HH$
\begin{equation*}
\abs{\ell(\scalarp{\xx_1, \phi(Z)}, Y) - \ell(\scalarp{\xx_2, \phi(Z)}, Y)}
\leq \mathcal{L} \abs{\scalarp{\xx_1-\xx_2, \phi(Z)}} \leq \mathcal{L}\norm{\xx_1- \xx_2}
\norm{\phi(Z)}
\end{equation*}
 and taking the expectation we have that
 $R(\xx)$ is Lipschitz continuous with constant $L = \cL \cdot \bbE [\norm{\phi(Z)}]$.
Finally, by the convexity of $\ell(\cdot, y)$, 
if $\xx_1, \xx_2 \in \HH$ and $\alpha\in [0,1]$, we have
\begin{equation*}
\ell(\scalarp{(1-\alpha)\xx_1+\alpha \xx_2, \phi(Z)}, Y) \leq (1-\alpha)\ell(\scalarp{\xx_1, \phi(Z)}, Y) +\alpha \ell(\scalarp{\xx_2, \phi(Z)}, Y). 
\end{equation*}
Thus, taking the expectation the convexity of $R$ follows. 
\item  Hypothesis \ref{H2} holds with $\XX = \HH$.
\item  $\hat{\ug}(\xx, (Z, Y))$ satisfies  \ref{H3} with $\xi=(Z,Y)$. Indeed, 
defining $\varphi(\xx, \xi) = \ell(\scalarp{\xx,\phi(Z)}, Y)$, we have $\ell^\prime (\scalarp{\xx,\phi(Z)}, Y) \phi(Z) \in \partial\varphi (\cdot, \xi)(\xx)$ and hence
\begin{equation*}
\forall\, \xx^\prime \in \HH\colon  \scalarp{\xx^\prime-\xx,\hat{\ug}(\xx, (Z, Y))}+ \varphi(\xx,\xi) \leq \varphi(\xx^\prime,\xi).
\end{equation*}
Thus, taking the expectation, we get $\uu(\xx)=\EE[\hat{\ug}(\xx, (Z, Y))] \in \partial R(\xx)$
\item Assumption \ref{A3} implies ${\rm\ref{H4}}$ with $\sigma^p = (2\nu \cL)^p$. Indeed,
\begin{align*}
\nonumber
\bbE \norm{\hat{\ug}(\xx,(Z,Y)) - \uu(\xx)}^p &\leq 2^{p-1} \big(\bbE [\norm{\hat{\ug}(\xx,(Z,Y))}^p] +\norm{\uu(\xx)}^{p}\big)\\
&\leq 2^{p-1}\big( \bbE [\norm{\hat{\ug}(\xx,(Z,Y))}^p] + \norm{\bbE[\hat{\ug}(\xx,(Z,Y))]}^p\big) \\
&\leq 2^{p-1}\big( 2\bbE [| \ell^\prime(\scalarp{\xx, \phi(Z), Y)}|^{p} \norm{\phi(Z)}^{p}] \big) \\
&\leq (2\cL)^{p} \bbE \norm{\phi(Z)}^{p} \leq (2\nu \cL)^p < \infty.
\end{align*}
\end{itemize}

\paragraph{Implementation details}

Since $\XX=\HH$, the projection $P_{\XX}$ is the identity mapping. Thus, for each $k \geq 1$, the main update in Algorithm \ref{algorithm:clippedSsGM} is defined by the following
\begin{equation}
x_{k+1} = x_k - \gamma_k \tilde{\ug}_k,
\label{eq:clipped-SsGM}
\end{equation}
where $\tilde{\ug}_k = \textsc{CLIP}(\bar{\ug}_k,\lambda_k)$, $\bar{\ug}_k = \frac{1}{m} \sum_{j=1}^m \hat{\ug}(x_k, (Z_j^k, Y_j^k))$, $\hat{\ug}(\xx, (Z, Y)) = \ell^\prime(\langle \xx, \phi(Z) \rangle, Y) \phi(Z)$ (recall that, for all $(t,y) \in \bbR \times \cY$, $\ell^\prime(t,y)$ is a subgradient of $\ell(\cdot, y)$ at $t$, that is, $\ell^\prime(t,y) \in \partial\ell(t, y)$). Since $\HH$ may be infinite dimensional, computing $\tilde{\ug}_k$ with a straightforward application of its definition may be problematic.

In order to solve the aforementioned issue, the algorithm will keep an implicit representation of the iterates $x_k$'s in terms of the kernels. By using the definitions of clipping and $\hat{\ug}(x, (Z, Y))$ it holds that
\begin{equation}
\label{eq:kernel_clip}
\tilde{\ug}_{k} 
= 
\rho_k
\cdot \bar{\ug}_k,
\qquad\text{with}\quad
\bar{\ug}_k= \frac{1}{m} \sum_{j=1}^m \hat{\ug}(x_k, (Z_j^k, Y_j^k)) = \frac{1}{m} \sum_{j=1}^m \alpha_j^k \phi(Z_j^k),
\end{equation}
where we set $\rho_k 
= \max \left\{\|\bar{\ug}_k\|/\lambda_k, 1 \right\}^{-1}$ 
and $\alpha_j^k =  \ell^\prime(\langle x_k, Z_j^k \rangle , Y_j^k)$. From equation \eqref{eq:kernel_clip} we have
\begin{equation}
\|\bar{\ug}_k\|^2 = \frac{1}{m^2} \sum_{j,j'=1}^m \alpha_j^k \alpha_{j'}^k K(Z_j^k, Z_{j'}^k).
\end{equation}
The above equation allows for the computation of $\rho_k$ once the $\alpha_j^k$s are known. Furthermore, combing  \Cref{eq:kernel_clip} and \Cref{eq:clipped-SsGM} we obtain that
\begin{align}
\label{eq:kernel_wk}
x_{k+1} &= x_k - \gamma_k \tilde{\ug}_k = x_k - \gamma_k \frac{\rho_k}{m} \sum_{j=1}^m \alpha_j^k\phi(Z_j^k) \nonumber\\
&= x_k + \sum_{j=1}^m \left(- \frac{\gamma_k\rho_k}{m} \right) \alpha_j^k\phi(Z_j^k) = \sum_{i=0}^{k} \sum_{j=1}^m a_{ij}^k \phi(Z_j^i) \:,
\end{align}
where we set $a^0_{i,j}=0$ and
\begin{equation}
a_{ij}^k = \begin{cases}
a_{ij}^{k-1} & \text{ if } i \leq k-1,\\[1ex]
-\dfrac{\gamma_k\rho_k}{m} \alpha_j^k & \text{ if } i = k.
\end{cases}    
\end{equation}
%
%
Replacing the expression of $x_k$ from \eqref{eq:kernel_wk} in the definition of $\alpha_j^k$ leads to
\begin{align}
\alpha_j^k = \ell'(\langle x_k, \phi(Z_j^k)\rangle, Y_j^k) = \ell^\prime \left(\sum_{i=0}^{k-1} \sum_{j'=1}^m a_{ij'}^{k-1} K(Z_{j'}^i, X_j^k), Y_j^k\right),
\end{align}
which can be computed directly using the kernels.

\begin{algorithm}[t!]
\caption{Kernel Clipped Stochastic subGradient Method}
Given the step-sizes $(\gamma_k)_{k \in \N} \in \bbR_{++}^\N$, the clipping levels $(\lambda_k)_{k \in \N} \in \bbR_{++}^\N$, the batch size $m \in \N$, $m\geq 1$, do the following.
\begin{equation}
\label{eq:loop}
\begin{array}{l}
\nonumber
\textsc{Initialization}: a_{i,j}^0=0\\[1ex]    
\text{for}\;k=1,\ldots\\[1ex]
\left\lfloor
\begin{array}{l}
\text{draw } (Z_{j}^k, Y_j^k)_{1\leq j \leq m} \ m \text{ independent copies of } (Z,Y),\\[1ex]
\text{set } \alpha_j^k =  \ell^\prime \left( \sum_{i=0}^{k-1}\sum_{j'=1}^{m} a_{ij'}^{k-1} K(Z_{j'}^i, Z_j^k), Y_j^k \right), \text{ for each } 1 \leq j \leq m,\\[1ex]
\text{compute}\\[1ex]
\left\lfloor
\begin{array}{l}
\|\bar{\ug}_k\|^2 = \frac{1}{m^2} \sum_{j,j' = 1}^{m} \alpha_j^k \alpha_{j'}^k K(Z_j^k, Z_{j'}^k),\\[1ex]
\rho_k 
= \max\left\{\frac{\|\bar{\ug}_k\|}{\lambda_k}, 1\right\}^{-1},\\[1ex]
a_{ij}^k = 
\begin{cases}
a_{ij}^{k-1}, & \text{ if } 0 < i \leq k-1, \\
-\frac{\gamma_k\rho_k}{m} \alpha_{j}^k & \text{ otherwise}.
\end{cases}
\end{array}
\right.
\end{array}
\right.
\end{array}
\end{equation}
From the sequence $(x_k)_{k \in \N}$ one defines also, for every $k \in \N$,
$\displaystyle\bar{x}_k = \frac{1}{k} \sum_{i=1}^k x_i$.
\label{algo:kernelSsGM}
\end{algorithm}

\paragraph{Algorithm.} The algorithm keeps an implicit representation for the iterates $x_k$ which are computed as in \eqref{eq:kernel_wk}. This is enough to make predictions on new points as we are going to show in the following. We notice that, at any time $k$, it is possible to make a prediction for an instance $Z$ using the $k$-th iterate as follows
\begin{align}
\label{eq:kernel_pred}
\langle x_{k+1}, \phi(Z) \rangle &= \langle x_{k}, \phi(Z) \rangle - \frac{\gamma_k\rho_k}{m} \sum_{j=1}^m \alpha_j^k K(Z_j^k, Z).
\end{align}
This prediction requires nothing but the prediction made with the previous iterate, $\rho_k, \alpha_j^k$s and the values of the kernels $K(Z_j^k,Z)$. It is easy to observe that, by recursion, there is no need to have an explicit expression for the $x_k$; instead it is enough to update, along the iterations, only $\rho_k, a_{ij}^k$ and $\alpha_j^k$. The full procedure is given in Algorithm \ref{algo:kernelSsGM}. Then, by considering $\bar{x}_k$ as defined in Algorithm \ref{algorithm:clippedSsGM}, the prediction  can be computed as
\begin{align}
\label{eq:kernel_w_pred}
\langle \bar{x}_{k+1}, \phi(Z) \rangle &= \frac{1}{k+1} \left((k \langle \bar{x}_{k}, \phi(Z) \rangle + \langle x_{k+1}, \phi(Z) \rangle \right).
\end{align}

\begin{remark}
Notice that the prediction made with the $k+1$-th iterate in \Cref{eq:kernel_pred} can be computed recursively from the prediction made by the $k$-th iterate. Similarly, the prediction made by the $k+1$-th average in \Cref{eq:kernel_w_pred} can be computed by the prediction made by the previous weighted average. Both there recursion allows for significant computational saving, as at each step it only necessary to compute the kernels among $X$ and the instances of the $k$-th batch.
\end{remark}

\end{appendices}

\end{document}